\newtheorem{theorem}{Theorem}[section]
\newtheorem{proposition}[theorem]{Proposition}
\newtheorem{definition}[theorem]{Definition}
\newtheorem{lemma}[theorem]{Lemma}
\theoremstyle{remark}
\newtheorem{remark}[theorem]{Remark}
\renewcommand*{\@cite@ofmt}{\bfseries\hbox}
\g@addto@macro\bfseries{\boldmath}
\numberwithin{equation}{section}
\newcommand{\real}{\mathbb R}
\newcommand{\C}{\mathbb C}
\newcommand{\bdm}{\begin{displaymath}}
\newcommand{\edm}{\end{displaymath}}
\newcommand{\beq}{\begin{equation}}
\newcommand{\beqa}{\begin{eqnarray}}
\newcommand{\beqas}{\begin{eqnarray*}}
\newcommand{\eeq}{\end{equation}}
\newcommand{\eeqa}{\end{eqnarray}}
\newcommand{\eeqas}{\end{eqnarray*}}
\newcommand{\dd}{\textup{d}}
\newcommand{\bbar}{\left( \begin{array}}
\newcommand{\ebar}{\end{array} \right)}
\newcommand{\la}{\langle}
\newcommand{\ra}{\rangle}
\begin{document}
\title[Construction of Willmore two-spheres]{\bf{Construction of Willmore two-spheres via harmonic maps into $SO^+(1,n+3)/(SO^+(1,1)\times SO(n+2))$}}
\author{Peng Wang }
\date{}
\maketitle

\begin{center}
{\bf Abstract}
\end{center}

This paper aims to provide a description of totally isotropic Willmore two-spheres and their adjoint transforms.
We first recall the isotropic harmonic maps which are introduced by H\'{e}lein, Xia-Shen and Ma for the study of Willmore surfaces.
Then we derive a  description of the normalized potential (some Lie algebra valued meromorphic 1-forms) of totally isotropic Willmore two-spheres in terms of  the isotropic harmonic maps. In particular, the corresponding isotropic harmonic maps are of finite uniton type. The proof also contains a concrete way to construct examples of totally isotropic Willmore two-spheres and their adjoint transforms. As illustrations, two kinds of examples are obtained this way. \\

{\bf Keywords:}  Willmore surfaces;  Isotropic Willmore two-spheres; DPW method; adjoint transform; isotropic harmonic maps.\\

MSC(2010): 53A30; 58E20; 53C43; 53C35

\tableofcontents
\section{Introduction}

Totally isotropic surfaces was first introduced by Calabi \cite{Calabi} in the study of  the global geometry of minimal two-spheres.
In the study of Willmore two-spheres, totally isotropic surfaces also play an important role \cite{Ejiri1988,Mon,Mus1}. Recently, Dorfmeister and Wang used the DPW method for the conformal Gauss map to study Willmore surfaces \cite{DoWa1}. They obtained the first new Willmore two-sphere in $S^6$, which admits no dual surface.
Along this way, Wang \cite{Wang-iso} gives a description of all totally isotropic Willmore two-spheres in $S^6$, in terms of the normalized potentials of their conformal Gauss maps.

Although Willmore two-spheres may have no dual surfaces, they do admit another kind of transforms, i.e., adjoint transforms introduced by Ma \cite{Ma2006}. The main idea of the adjoint transforms is to find out another Willmore surface located in the mean curvature spheres of the original one and having the same complex coordinates. A somewhat surprising result derived by Ma states that a Willmore surface and its adjoint surface in $S^{n+2}$ also provide  a harmonic map into the Grassmannian $Gr_{1,1}(\mathbb R^{n+3}_1)$, which was first discovered by H\'{e}lein \cite{Helein,Helein2} and generalized by Xia and Shen \cite{Xia-Shen}.
This kind  of harmonic maps also appeared naturally when Brander and Wang considered the Bj\"{o}rling problems of Willmore surfaces \cite{BW}.

 However, such harmonic maps will have singularities in general (See Section 6 for example). So it is very hard to use them to discuss the global geometry of Willmore surfaces. Due to this reason, in \cite{DoWa1}  Dorfmeister and Wang mainly dealt with the conformal Gauss maps of Willmore surfaces, which are anther kind of harmonic maps related to Willmore surfaces globally.

Although it exists locally in general, the harmonic map given by  a Willmore surface and its adjoint surface is very simple and provides the Willmore surface and its adjoint surface immediately. So it is natural to use this harmonic map to describe totally isotropic Willmore two-spheres. In particular, this provides a more simple way to derive examples of totally isotropic Willmore two-spheres and their adjoint surfaces, in contrast to  using the conformal Gauss maps of Willmore surfaces \cite{Wang-iso}.

In this paper, we will give a characterization of the harmonic map given by a totally isotropic Willmore two-sphere and its adjoint surface.
In particular, such harmonic maps are very simple so that one can obtain a concrete algorithm to construct all of them, which is the main topic of this paper. As illustrations, we also derive two kinds of examples.

The main idea of our work are based on the DPW method  for harmonic maps \cite{DPW, Helein} and the description of harmonic maps of finite uniton type \cite{BuGu,Gu2002,DoWa2}. The DPW method \cite{DPW} gives a way to produce harmonic maps in terms some meromorphic 1-forms, i.e., normalized potentials. The work of \cite{BuGu,Gu2002,DoWa2} states that harmonic maps of finite uniton type can be derived in a more convenient way, that is, the normalized potentials must take values in some nilpotent Lie algebra. This permits a way to derive such harmonic maps in an explicit way. On the other hand, due to \cite{Uh,BuGu,Gu2002,DoWa2}, harmonic maps from two-spheres into an inner symmetric space will always be of finite uniton type.
This provides a way to classify all Willmore two-spheres in terms of their conformal Gauss maps \cite{Wang-1}. For the harmonic maps used in this paper, a main problem is that they are not globally well-defined in general. So one can not apply the theory to such harmonic maps. But using Wu's formula and the description of the normalized potentials of harmonic maps of finite uniton type, we are able to show that the harmonic map given by a totally isotropic Willmore two-sphere and its adjoint surface is also of finite uniton type. So far we do not have a clear explain for this phenomena, which may need a detailed discussion on the Iwasawa cells of the corresponding non-compact Loop groups. We hope to continue this study in future publication.\\

This paper is organized as follows:
In Section 2, we first recall some basic results about Willmore surfaces and their adjoint transforms.
 Then in Section 3 we discuss the isotropic harmonic maps given by  Willmore surfaces and their adjoint transforms.
 Section 4 provides a description of the normalized potentials of totally isotropic Willmore two-spheres in terms of  the isotropic harmonic maps. The converse part, i.e., that generically such normalized potentials will always produce totally isotropic Willmore surfaces and their adjoint transforms, is the main content of Section 5. Using these results, we also derive some concrete examples in Section 5. Then we end the paper by
 Section 6, which contain the technical and tedious computations of Section 5.

\section{Willmore surfaces and adjoint surfaces}  \label{section2}

In this  section we will first recall the basic surface theory of Willmore surfaces in $ S^{n+2}$ in the spirit of the treatment of \cite{BPP,Ma}.
Then we will collect the descriptions of the  adjoint transforms of Willmore surfaces \cite{Ma2006}.
We refer to \cite{Ma,Ma2006,BW} for more details.

\subsection{Review of Willmore surfaces in $ S^{n+2}$}

Let $\mathbb{R}^{n+4}_1$  be the Minkowski space, with a Lorentzian metric $\langle x,y\rangle =-x_{1}y_1+\sum_{j=2}^{n+3}x_jy_j=x^t I_{1,n+3} y,$ $ I_{1,n+3}=\textup{diag}(-1,1,\cdots,1).$
Let $\mathcal{C}_+^{n+3}:= \{ x \in \mathbb{R}^{n+4}_{1} |\langle x,x\rangle=0, x_1>0 \} $ be the forward light cone.
Let $
Q^{n+2}:=\{\ [x]\in\mathbb{R}P^{n+3}\ |\ x\in \mathcal{C}_+^{n+3}\}$ be the the projective light cone with the induced conformal metric.
Then $Q^{n+2}$ is conformally equivalent to $S^{n+2}$, and the conformal group of
$S^{n+2}\cong Q^{n+2}$ is  the orthogonal group $O(1,n+3)/\{\pm1\}$ of
$\mathbb{R}^{n+4}_1$, acting on $Q^{n+2}$ by
$
T([x])=[Tx]$ for any $T\in O(1,n+3).
$
Let $SO^+(1,n+3)$ be the connected component of $O(1,n+3)$ containing $I$, i.e.,
\[SO^+(1,n+3)=\{T\in O(1,n+3)|\ \det T=1,\hbox{ $T$ preserves the time direction of } \mathbb{R}^{n+4}_1\}.\]

Let $y:M \rightarrow  S^{n+2}$ be a conformal immersion from a Riemann surface $M$, with $z$ a local complex coordinate on $U\subset M$ and $\la y_z,y_{\bar z}\ra=\frac{1}{2}e^{2\omega}$. The lift $Y:U\rightarrow \mathcal{C}_+^{n+3}$ is called a canonical lift of $y$ with respect
to $z$, satisfying
$|{\rm d}Y|^2=|{\rm d}z|^2$.
 Then there is a bundle decomposition
\[
M\times
\mathbb{R}^{n+4}_{1}=V\oplus V^{\perp}, \hbox{ with } V={\rm Span}_{\mathbb R}\{Y,{\rm Re}Y_{z},{\rm Im}Y_{z},Y_{z\bar{z}}\},\ V^{\perp}\perp V.
\]
Here $V$ is a Lorentzian rank-4 sub-bundle. This decomposition is independent of the choice of $Y$
and $z$. We denote by $V_{\mathbb{C}}$ and
$V^{\perp}_{\mathbb{C}}$ as  their
complexifications.
There exists a unique section $N\in\Gamma(V)$ such that $
\langle N,Y_{z}\rangle=\langle N,Y_{\bar{z}}\rangle=\langle
N,N\rangle=0,\langle N,Y\rangle=-1.$
Noting that $Y_{zz}$ is orthogonal to
$Y$, $Y_{z}$ and $Y_{\bar{z}}$, there exists a complex function $s$
and a section $\kappa\in \Gamma(V_{\mathbb{C}}^{\perp})$ such that
$
Y_{zz}=-\frac{s}{2}Y+\kappa.
$
This defines two basic invariants $\kappa$ and $s$ depending on
coordinates $z$, \emph{the conformal Hopf differential} and
\emph{the Schwarzian} of $y$  \cite{BPP}. Let $D$
denote the normal connection and $\psi\in
\Gamma(V_{\mathbb{C}}^{\perp})$ any section of the normal bundle.
The structure equations can be given as follows:
\begin{equation}\label{eq-stru}
\left\{\begin {split}
Y_{zz}&=-\frac{s}{2}Y+\kappa,\\
Y_{z\bar{z}}&=-\langle \kappa,\bar\kappa\rangle Y+\frac{1}{2}N,\\
N_{z}&=-2\langle \kappa,\bar\kappa\rangle Y_{z}-sY_{\bar{z}}+2D_{\bar{z}}\kappa,\\
\psi_{z}&=D_{z}\psi+2\langle \psi,D_{\bar{z}}\kappa\rangle Y-2\langle
\psi,\kappa\rangle Y_{\bar{z}}.
\end {split}\right.
\end{equation}
The conformal Gauss, Codazzi and Ricci equations as integrable
conditions are:
\begin{equation}\label{eq-integ}
\frac{s_{\bar{z}}}{2}=3\langle
\kappa,D_z\bar\kappa\rangle +\langle D_z\kappa,\bar\kappa\rangle,\
{\rm Im}(D_{\bar{z}}D_{\bar{z}}\kappa+\frac{\bar{s}}{2}\kappa)=0,\
 D_{\bar{z}}D_{z}\psi-D_{z}D_{\bar{z}}\psi =
2\langle \psi,\kappa\rangle\bar{\kappa}- 2\langle
\psi,\bar{\kappa}\rangle\kappa.
\end{equation}

 \emph{The conformal Gauss map} of $y$ is defined as follow.
\begin{definition} \cite{Bryant1984,BPP,Ejiri1988,Ma}
For a conformally immersed surface $y:M\to  S^{n+2}$, \emph{the conformal Gauss map $Gr(p):  M\rightarrow
Gr_{3,1}(\mathbb{R}^{n+4}_{1})=SO^+(1,n+3)/(SO^+(1,3)\times SO(n))
$} of
$y$ is defined as
\[
Gr(p):=V_{p}.
\]
So locally we have
$Gr=Y\wedge Y_{u}\wedge Y_{v}\wedge N=-2i\cdot Y\wedge Y_{z}\wedge
Y_{\bar{z}} \wedge N,$ with $z=u+iv.$

\end{definition}

 Direct computation shows that $Gr$ induces a conformal-invariant
metric
$
\mathbf{g}:=\frac{1}{4}\langle {\rm d}Gr,{\rm d}Gr\rangle=\langle
\kappa,\bar{\kappa}\rangle|\dd z|^{2}
$
on M. Note $\mathbf{g}$ degenerates at umibilic points of $y$.  The Willmore
functional and Willmore surfaces can be defined by use of this metric.

\begin{definition} \emph{The Willmore functional} of $y$ is
defined as:
\[
W(y):=2i\int_{M}\langle \kappa,\bar{\kappa}\rangle \dd z\wedge
\dd \bar{z}.
\]
An immersed surface $y:M\rightarrow  S^{n+2}$ is called a
\emph{Willmore surface} if it is a critical surface of the Willmore
functional with respect to any variation of the map $y:M\rightarrow
  S^{n+2}$.
\end{definition}
It is well-known that \cite{Bryant1984,BPP,Ejiri1988,Wang1998}  $y$ is Willmore if and only if
 \begin{equation}\label{eq-Willmore}
D_{\bar{z}}D_{\bar{z}}\kappa+\frac{\bar{s}}{2}\kappa=0;
 \end{equation}
if and only if
 the conformal Gauss map $Gr:M\rightarrow Gr_{3,1}(\mathbb{R}^{n+3}_{1})$ is  harmonic. We refer to \cite{DoWa1} for
 the conformal Gauss map approach for Willmore surface.

\subsection{Adjoint transforms of a Willmore surface and the second harmonic map related to Willmore surfaces}

Transforms play an important role in the study of Willmore surfaces. For a Willmore surface $y$ in $S^3$, it was shown by Bryant in the seminal paper \cite{Bryant1984} that they always admit a unique dual surface which may have
branch points or degenerate to a point. Hence the dual surface is either degenerate or has the same complex coordinate and the same conformal Gauss map as $y$ at the points it is immersed. This duality theorem, however, does not hold in general when the codimension is bigger than $1$ (\cite{Ejiri1988}, \cite{BPP}, \cite{Ma2006}). To characterize  Willmore surfaces with dual surfaces, in \cite{Ejiri1988} Ejiri  introduced the notion of {\em S-Willmore surfaces}. Here we define it slightly differently to include all Willmore surfaces with dual surfaces:
\begin{definition}
A Willmore immersion $y:M^2\rightarrow  S^{n+2}$ is called an S-Willlmore surface if its conformal Hopf differential satisfies
\[
D_{\bar{z}}\kappa || \kappa,
\]
i.e. there exists some function $\mu$ on $M$ such that $D_{\bar{z}}\kappa+\frac{\mu}{2}\kappa=0$.
\end{definition}
A basic result of \cite{Ejiri1988} states that a Willmore surface admits a dual surface if and only if it is S-Willmore. Moreover the dual surface is also Willmore at the points it is immersed.

To consider the generic Willmore surfaces, Ma introduced the adjoint transform of a Willmore surface $y$ \cite{Ma2006,Ma}.
An adjoint transform of $y$ is a conformal map $\hat y$ which is located on the mean curvature sphere of $y$ and satisfies some additional condition.
To be concrete we have the following
\subsubsection{Adjoint transforms}
Let $y:U\rightarrow  S^{n+2}$ be an umbilic free Willmore surface  with canonical lift $Y$ with respect to $z$ as above. Set
\begin{equation}\label{eq-hat-Y-def}
\hat{Y}=N+ \bar\mu Y_{z}+ \mu Y_{\bar{z}}+ \frac{1}{2}|\mu|^2Y,
\end{equation}
with $\mu \dd z=2\langle\hat{Y},Y_z\rangle \dd z$ a  connection 1--form.
Direct computation yields \cite{Ma2006}
\begin{equation}\label{eq-hat-Y-z}
\hat{Y}_{z}=\frac{\mu}{2} \hat{Y}+\theta \left(Y_{\bar{z}}+\frac{\bar\mu}{2}Y\right)+\rho  \left(Y_{z}+\frac{\mu}{2}Y\right)+2\zeta
\end{equation}
with
\[
\theta:=
\mu_z- \frac{\mu^2}{2}-s,\ \rho:=\bar\mu_{z}-2\langle \kappa,\bar\kappa \rangle,\ \zeta:=D_{\bar{z}}\kappa+\frac{\bar{\mu}}{2}\kappa.
\]
Now we define the adjoint surface as follow.
\begin{definition} \cite{Ma2006} The map   $ \hat{Y}: U \rightarrow S^{n+2}$  is called an adjoint transform of the Willmore surface $Y$ if the following two equations hold for $\mu$:
\begin{equation}\label{eq-h-1II}
\mu_z-\frac{\mu^2}{2}-s=0,  \hbox{\ Riccati equation,}
\end{equation}
\begin{equation}\label{eq-h-m}
\langle D_{\bar z}\kappa+\frac{\bar\mu}{2}\kappa, D_{\bar z}\kappa+\frac{\bar\mu}{2}\kappa \rangle =0.
\end{equation}
\end{definition}
Note that
$\hat{Y}$ is the dual surface of $Y$ if and only if $D_{\bar{z}}\kappa+\frac{\bar{\mu}}{2}\kappa=0$ (\cite{Bryant1984}, \cite{Ejiri1988},  \cite{Ma2006}).

\begin{theorem}\cite{Ma2006}\ {\em Willmore property and existence of adjoint transform:}
  The adjoint transform $\hat{Y}$ of a Willmore surface $y$ is also a Willmore surface (may degenerate). Moreover,
\begin{enumerate}
\item  If $\langle \kappa,\kappa\rangle \equiv0$, any solution to the equation \eqref{eq-h-1II} is also a solution to the equation
 \eqref{eq-h-m}. Hence, there exist infinitely many adjoint surfaces of $y$ in this case.
\item If $\langle \kappa,\kappa\rangle \neq 0$ and
$\Omega \dd z^6:=\langle D_{\bar{z}} \kappa,\kappa\rangle^2 -\langle \kappa,\kappa\rangle \langle D_{\bar{z}}\kappa,D_{\bar{z}}\kappa\rangle \dd z^6\neq 0,$
 there are exactly two different solutions to equation \eqref{eq-h-m}, which also solve \eqref{eq-h-1II}. Hence, there exist exactly two adjoint surfaces of $y$ in this case.
\item If $\langle \kappa,\kappa\rangle \neq 0$ and $\langle D_{\bar{z}} \kappa,\kappa\rangle^2 -\langle \kappa,\kappa\rangle \langle D_{\bar{z}}\kappa,D_{\bar{z}}\kappa\rangle \equiv 0,$ there exists a unique solution to  \eqref{eq-h-m}, which also solves  \eqref{eq-h-1II}.
     Hence, there exists a unique adjoint surface  of $y$ in this case.
\end{enumerate}
\end{theorem}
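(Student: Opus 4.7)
The plan is to split the theorem into two stages: first the Willmore property of the adjoint surface $\hat Y$, and then the counting of adjoint transforms in cases (1)--(3).

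For the Willmore property, I would compute the conformal Hopf differential $\hat\kappa$ and Schwarzian $\hat s$ of $\hat Y$ from the formulas \eqref{eq-hat-Y-def}--\eqref{eq-hat-Y-z} in a canonical lift adapted to a complex coordinate of $\hat Y$. The Riccati equation \eqref{eq-h-1II} eliminates the $\theta$-term of \eqref{eq-hat-Y-z}, while \eqref{eq-h-m} ensures that $\zeta$ is isotropic in the normal bundle. Then a direct verification of \eqref{eq-Willmore} for $\hat Y$, invoking the Willmore equation for $y$ together with the Codazzi and Ricci equations in \eqref{eq-integ}, completes this stage. The computation is systematic but long, and allows $\hat Y$ to degenerate (i.e., $\hat Y_{\hat z}$ vanishes identically) exactly when $\zeta\equiv 0$, which recovers the dual surface.

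For the counting, the algebraic equation \eqref{eq-h-m} expands via $\zeta = D_{\bar z}\kappa + \tfrac{\bar\mu}{2}\kappa$ into
\[
f(\bar\mu) := \tfrac{\langle\kappa,\kappa\rangle}{4}\,\bar\mu^{2} + \langle D_{\bar z}\kappa,\kappa\rangle\,\bar\mu + \langle D_{\bar z}\kappa, D_{\bar z}\kappa\rangle = 0,
\]
a quadratic in $\bar\mu$ whose discriminant is precisely $\Omega$. In case (1), $\langle\kappa,\kappa\rangle\equiv 0$ forces $\langle D_{\bar z}\kappa,\kappa\rangle\equiv 0$ by differentiation, and applying $\partial_{\bar z}$ again together with $D_{\bar z}D_{\bar z}\kappa = -\tfrac{\bar s}{2}\kappa$ from \eqref{eq-Willmore} gives $\langle D_{\bar z}\kappa, D_{\bar z}\kappa\rangle\equiv 0$. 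Hence $f\equiv 0$, and adjoint transforms correspond bijectively to the one-parameter family of local solutions of the Riccati equation \eqref{eq-h-1II}. Cases (2) and (3) correspond to two simple roots and a single double root of $f$, respectively.

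The main obstacle is to show that in cases (2) and (3) these algebraic roots automatically satisfy the Riccati equation \eqref{eq-h-1II}. My strategy is to differentiate the identity $\langle\zeta,\zeta\rangle \equiv 0$ in $\bar z$ and use the relation
\[
D_{\bar z}\zeta = \tfrac{\bar\theta}{2}\,\kappa + \tfrac{\bar\mu}{2}\,\zeta,\qquad \bar\theta := \bar\mu_{\bar z} - \tfrac{\bar\mu^{2}}{2} - \bar s,
\]
which is a direct consequence of \eqref{eq-Willmore}. Pairing against $\zeta$ yields $\bar\theta\,\langle\kappa,\zeta\rangle = 0$, and one checks that $\langle\kappa,\zeta\rangle|_{\bar\mu = \bar\mu_*} = \tfrac{1}{2}f'(\bar\mu_*)$. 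In case (2), $\Omega\neq 0$ gives $f'(\bar\mu_*)\neq 0$ at both simple roots, forcing $\bar\theta = 0$, i.e.\ the conjugate of \eqref{eq-h-1II}. In case (3), however, the double root coincides with the zero of $f'$, so this cancellation argument degenerates; here the unique candidate $\bar\mu = -2\langle D_{\bar z}\kappa,\kappa\rangle/\langle\kappa,\kappa\rangle$ must be substituted into \eqref{eq-h-1II} directly and verified using $\Omega\equiv 0$ together with the Willmore and Codazzi equations of \eqref{eq-integ}. This final verification in case (3) is where I expect the bulk of the technical work to lie.
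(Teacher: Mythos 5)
The paper does not prove this theorem; it is quoted verbatim from Ma \cite{Ma2006} with a citation and no argument, so there is no in-paper proof to compare against. Taking your sketch on its own terms: the overall plan is the right one (compute $\hat\kappa,\hat s$ for the Willmore part; view \eqref{eq-h-m} as a quadratic $f(\bar\mu)=\tfrac{1}{4}\langle\kappa,\kappa\rangle\bar\mu^2+\langle D_{\bar z}\kappa,\kappa\rangle\bar\mu+\langle D_{\bar z}\kappa,D_{\bar z}\kappa\rangle$ whose discriminant is $\Omega$; differentiate $\langle\zeta,\zeta\rangle\equiv 0$ in $\bar z$ and use $D_{\bar z}\zeta=\tfrac{\bar\theta}{2}\kappa+\tfrac{\bar\mu}{2}\zeta$ to force $\bar\theta\,\langle\kappa,\zeta\rangle=0$). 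A few corrections and one substantive remark: first, $\langle\kappa,\zeta\rangle=\langle D_{\bar z}\kappa,\kappa\rangle+\tfrac{\bar\mu}{2}\langle\kappa,\kappa\rangle=f'(\bar\mu)$, not $\tfrac{1}{2}f'(\bar\mu)$; the conclusion is unchanged. Second, your characterization of degeneracy of $\hat Y$ is off: from \eqref{eq-hat-Y-z} with $\theta=0$ one has $\hat Y_z=\tfrac{\mu}{2}\hat Y+\rho\,(Y_z+\tfrac{\mu}{2}Y)+2\zeta$, so $[\hat Y]$ is constant iff both $\rho=0$ and $\zeta=0$; the condition $\zeta\equiv 0$ alone gives the S-Willmore (dual surface) case, which need not degenerate to a point. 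Third, the ``bulk of the technical work'' you anticipate in case (3) is actually a two-line computation: with $a=\langle\kappa,\kappa\rangle$, $b=\langle D_{\bar z}\kappa,\kappa\rangle$, $c=\langle D_{\bar z}\kappa,D_{\bar z}\kappa\rangle$ and $\bar\mu_*=-2b/a$, one has $a_{\bar z}=2b$ and, by the Willmore equation, $b_{\bar z}=-\tfrac{\bar s}{2}a+c$, whence
\[
\bar\theta=\partial_{\bar z}\Bigl(-\tfrac{2b}{a}\Bigr)-\tfrac{\bar\mu_*^2}{2}-\bar s
=\Bigl(\bar s-\tfrac{2c}{a}+\tfrac{4b^2}{a^2}\Bigr)-\tfrac{2b^2}{a^2}-\bar s
=\tfrac{2(b^2-ac)}{a^2}=\tfrac{2\Omega}{a^2}=0,
\]
so only the Willmore equation is needed, not Codazzi. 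With these minor fixes your outline is a sound reconstruction of Ma's argument.
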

\begin{remark} In \cite{bq}, dressing transformations of constrained Willmore
surfaces are discussed in details. It stays unclear whether the adjoint transforms can be derived as a special kind of  dressing transformations.
\end{remark}

\subsubsection{Harmonic maps into $SO^+(1,n+3)/(SO^+(1,1)\times SO(n+2))$ related to
Willmore surfaces}

A crucial observation by H\'{e}lein etc. \cite{Helein,Helein2,Xia-Shen,Ma2006} is that $Y$ and $\hat Y$ produce furthermore a second useful harmonic map related to a Willmore surface $y$.
\begin{theorem}
Let  $[Y]$ be a Willmore surface. Let $\mu$ be a solution to the Riccati equation \eqref{eq-h-1II} on $U$, defining $\hat Y$
as \eqref{eq-hat-Y-def}.
Let
$\mathcal{F}_h:  U \to  SO^{+}(1,n+3)/(SO^+(1,1)\times SO(n+2))$ be
the map taking $p$ to  $Y(p)\wedge \hat{Y}(p)$. We have the following results.
\begin{enumerate}
\item (\cite{Helein,Helein2,Xia-Shen}) The map $ \mathcal{F}_h$ is harmonic,
and is called a {\em half--isotropic harmonic map with respect to $Y$}.
\item (\cite{Ma2006}) If $\mu$ also solves \eqref{eq-h-m}, i.e., $\hat Y$ is an adjoint transform of $y$,  then $\mathcal{F}_h$ is  \emph{conformally} harmonic, and is called an \emph{isotropic harmonic
map with respect to $Y$}.
\end{enumerate}
\end{theorem}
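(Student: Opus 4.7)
The approach is to verify harmonicity (and, for (2), isotropy) by a direct Maurer–Cartan computation in an adapted moving frame. Identify the target with the Grassmannian $Gr_{1,1}(\mathbb{R}^{n+4}_1)$ of Lorentzian 2-planes, so $\mathcal{F}_h(p)=\mathrm{span}\{Y(p),\hat Y(p)\}$; let $\mathfrak{g}=\mathfrak{h}\oplus\mathfrak{m}$ be the Cartan decomposition, with $\mathfrak{h}=\mathfrak{so}(1,1)\oplus\mathfrak{so}(n+2)$. Build an orthogonal frame $F=(Y,\hat Y,e_1,e_2,\psi_1,\ldots,\psi_n)$ on $U$, where $(Y,\hat Y)$ is a null basis of the 2-plane (so $\langle Y,\hat Y\rangle=-1$), $(e_1,e_2)$ is a real orthonormal basis of $V\cap(Y\wedge\hat Y)^\perp$ constructed from $\tilde Y_z:=Y_z+\tfrac{\mu}{2}Y$, and $\{\psi_\alpha\}$ is a $D$-orthonormal basis of $V^\perp$. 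Decompose $\alpha=F^{-1}dF=\alpha_\mathfrak{h}+\alpha_\mathfrak{m}$. From \eqref{eq-stru} the projection of $Y_z$ onto $(Y\wedge\hat Y)^\perp$ is $\tilde Y_z$, and the Riccati equation \eqref{eq-h-1II} (i.e.\ $\theta=0$) reduces \eqref{eq-hat-Y-z} to $\hat Y_z=\tfrac{\mu}{2}\hat Y+\rho\tilde Y_z+2\zeta$, so the projection of $\hat Y_z$ is $\rho\tilde Y_z+2\zeta$. Thus the $(1,0)$-part $\alpha'_{\mathfrak{m}}$ has a ``$Y$-column'' equal to $\tilde Y_z\in\mathrm{span}(e_1,e_2)_{\mathbb{C}}$ and a ``$\hat Y$-column'' equal to $\rho\tilde Y_z+2\zeta\in\mathrm{span}(e_1,e_2,\psi_\alpha)_{\mathbb{C}}$.

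For (1), harmonicity amounts to the equation $\partial_{\bar z}\alpha'_{\mathfrak{m}}+[\alpha''_{\mathfrak{h}},\alpha'_{\mathfrak{m}}]=0$. For the $Y$-column, substituting $Y_{z\bar z}=-\langle\kappa,\bar\kappa\rangle Y+\tfrac12 N$ into $\partial_{\bar z}\tilde Y_z$ and expanding $N$ via \eqref{eq-hat-Y-def} yields an element of $\mathrm{span}\{Y,\hat Y,\tilde Y_z\}$, matching the $\mathfrak{h}$-connection correction. For the $\hat Y$-column, the critical term $D_{\bar z}D_{\bar z}\kappa$ appearing in $D_{\bar z}\zeta$ is eliminated by the Willmore equation \eqref{eq-Willmore}; combined with the $\bar z$-derivative of \eqref{eq-h-1II} and the Codazzi equation of \eqref{eq-integ}, the remaining terms reorganize into the required $\mathfrak{h}$-connection form. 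This is where the Willmore hypothesis enters essentially.

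For (2), the $SO^+(1,1)$-weight splitting $\mathfrak{m}=\mathfrak{m}_+\oplus\mathfrak{m}_-$ (corresponding to the two null directions $[Y],[\hat Y]$) equips each complexified summand with a $K$-invariant isotropy condition inherited from the Euclidean structure of $L^\perp$; these are the two components of ``isotropy'' (half-isotropic vs.\ isotropic). The $\mathfrak{m}_+$-component reduces to $\langle\tilde Y_z,\tilde Y_z\rangle=0$, which is automatic since $\tilde Y_z$ is null in $\mathbb{R}^{n+4}_1$. The $\mathfrak{m}_-$-component reduces to $\langle\rho\tilde Y_z+2\zeta,\rho\tilde Y_z+2\zeta\rangle=4\langle\zeta,\zeta\rangle$ (using $\langle\tilde Y_z,\zeta\rangle=0$), and vanishes precisely when $\mu$ also solves \eqref{eq-h-m}.

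The main obstacle is the $\hat Y$-column computation in (1): the Willmore equation, the Codazzi identity, and the $\bar z$-derivative of the Riccati equation must cooperate exactly so that all $\kappa$- and $D$-derivative contributions in $\partial_{\bar z}(\rho\tilde Y_z+2\zeta)$ combine into the $\mathfrak{h}$-connection correction. Every step is explicit, but the bookkeeping is considerable.
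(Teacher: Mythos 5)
The paper does not prove this theorem---it is recalled from H\'{e}lein, Xia--Shen, and Ma---so there is no proof in the text to compare against. Your approach (direct Maurer--Cartan computation in an adapted frame) is the standard one from those references, and your plan is essentially correct. Two items you identify precisely: with the Riccati equation $\theta=0$ you get $\hat Y_z=\tfrac{\mu}{2}\hat Y+\rho\tilde Y_z+2\zeta$, and the isotropy computation for part (2) does reduce to $\langle\zeta,\zeta\rangle=0$, which is exactly \eqref{eq-h-m}; the cross term $\langle\tilde Y_z,\rho\tilde Y_z+2\zeta\rangle$ also vanishes automatically because $\tilde Y_z$ is null and $\tilde Y_z\perp V^\perp_\mathbb{C}$, so the full condition $B_1B_1^t=0$ collapses to the single scalar equation \eqref{eq-h-m} as it must.

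The substantive gap is in part (1). You name the ingredients (Willmore, conjugate Riccati, integrability equations) but do not carry out the $\hat Y$-column bookkeeping, and you assert it ``reorganizes'' without exhibiting why. The identity that actually makes the normal-bundle piece work is clean and worth recording: from the Willmore equation $D_{\bar z}D_{\bar z}\kappa=-\tfrac{\bar s}{2}\kappa$ and the conjugated Riccati equation $\bar\mu_{\bar z}-\bar s=\tfrac{\bar\mu^2}{2}$ one gets
\[
D_{\bar z}\zeta=D_{\bar z}D_{\bar z}\kappa+\tfrac{\bar\mu_{\bar z}}{2}\kappa+\tfrac{\bar\mu}{2}D_{\bar z}\kappa
=\tfrac{\bar\mu_{\bar z}-\bar s}{2}\kappa+\tfrac{\bar\mu}{2}D_{\bar z}\kappa
=\tfrac{\bar\mu}{2}\zeta,
\]
which is precisely the statement that $\zeta$ is a ``holomorphic'' section relative to the $\mathfrak{h}$-connection; this handles the $V^\perp$-part of the $\hat Y$-column. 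The $V$-part of the $\hat Y$-column (the $\rho\tilde Y_z$ piece) still needs $\rho_{\bar z}$ controlled via the conformal Gauss equation in \eqref{eq-integ}, which you gesture at but do not check. Also, a small imprecision: you call $F=(Y,\hat Y,e_1,e_2,\psi_\alpha)$ an ``orthogonal frame,'' but $(Y,\hat Y)$ is a null pair with $\langle Y,\hat Y\rangle=-1$; to land in $SO^+(1,n+3)$ one should use the pseudo-orthonormal pair $\tfrac{1}{\sqrt2}(Y\pm\hat Y)$ as the paper does in the following theorem (this is cosmetic and does not affect the computations).
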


At umbilic points it is possible that there exists a limit of $\mu$ such that \eqref{eq-h-1II} holds. Due to the following lemma, the harmonic map $\mathcal{F}_h$ has no definition when $\mu$ tends to $\infty$.

\begin{lemma} \label{umbiliclemma}
\cite{Ejiri1988,DoWa1} At the umbilic points of $Y$,  the limit of $\mu$ goes to a finite number or infinity. When $\mu$ goes to infinity, $[\hat{Y}]$ tends to $[Y]$, and at the limit point
we have $[\hat{Y}]=[Y]$.
\end{lemma}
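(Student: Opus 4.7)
The lemma concerns only the behaviour of $\mu$, since $\hat{Y}$ is built from $\mu$ via \eqref{eq-hat-Y-def}, and the coefficient $s$ of the Riccati equation \eqref{eq-h-1II} is smooth across an umbilic point $p$ (only $\kappa$ vanishes there). My plan therefore has two steps: first understand the limit of $\mu$ at $p$, then substitute into \eqref{eq-hat-Y-def}.

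For the first step, I would linearise \eqref{eq-h-1II} by the Cole--Hopf-type substitution $\mu=-2w_z/w$, which turns the Riccati equation into the linear second-order equation $w_{zz}+\tfrac{s}{2}w=0$. For each fixed $\bar z$ this is a linear ODE in $z$ with smooth coefficient, so by smooth dependence of linear ODEs on parameters one obtains a smooth local solution $w(z,\bar z)$ on a neighbourhood of $p$. Where $w(p)\neq 0$ this yields the finite limit $\mu(p)=-2w_z(p)/w(p)$; at a zero of $w$ of order $k$ one has $\mu\sim -2k/(z-p)+O(1)$, so $|\mu|\to\infty$. In either case $\lim_{z\to p}\mu(z)$ exists in $\mathbb{C}\cup\{\infty\}$.

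For the second step, suppose $|\mu|\to\infty$. Rescaling by the positive factor $2/|\mu|^2$ (which does not change the projective class) and using $\bar\mu/|\mu|^2=1/\mu$, the definition \eqref{eq-hat-Y-def} becomes
\[
\frac{2}{|\mu|^2}\,\hat{Y} \;=\; \frac{2}{|\mu|^2}\,N \;+\; \frac{2}{\mu}\,Y_z \;+\; \frac{2}{\bar\mu}\,Y_{\bar z} \;+\; Y.
\]
All four sections $Y, Y_z, Y_{\bar z}, N$ are smooth and hence bounded in a neighbourhood of $p$, so the first three terms vanish in the limit and the rescaled representative converges to $Y(p)$; projectively this gives $[\hat{Y}]\to[Y]$.

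The only mildly delicate point is the two-variable nature of the linearisation: the Riccati equation only constrains $\partial_z\mu$, so the Cole--Hopf substitution is really a one-parameter family of linear ODEs in $z$ indexed by $\bar z$, and one must invoke smooth dependence on parameters to secure smoothness of $w$ in both real coordinates. Once that is in place both claims follow from direct manipulation of the formulas.
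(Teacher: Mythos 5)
The paper does not prove Lemma~\ref{umbiliclemma}; it is stated and cited to \cite{Ejiri1988,DoWa1}, so there is no internal proof to compare your attempt against. Assessing your argument on its own merits: the second step is fine — multiplying \eqref{eq-hat-Y-def} by $2/|\mu|^2$, using $\bar\mu/|\mu|^2=1/\mu$, and letting the bounded sections $N, Y_z, Y_{\bar z}$ be swallowed by the vanishing prefactors cleanly shows that the rescaled representative of $\hat Y$ converges to $Y$, hence $[\hat Y]\to[Y]$.

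The first step has a genuine gap, and it is not the one you flag. The linearised equation $w_{zz}+\tfrac{s}{2}w=0$ is not ``a linear ODE in $z$ for each fixed $\bar z$'': $\partial_z=\tfrac12(\partial_u-i\partial_v)$ is a Wirtinger derivative, so the equation is a second-order PDE in the real variables $(u,v)$, not a parametrised family of ODEs, and one cannot invoke ``smooth dependence on parameters'' to produce a smooth solution $w(z,\bar z)$. Relatedly, the Cole--Hopf inversion $w=\exp\bigl(-\tfrac12\int\mu\,\dd z\bigr)$ is not path-independent when $\mu$ depends on $\bar z$, so you cannot a priori recover a globally smooth $w$ from the given $\mu$. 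Finally, even granting some smooth $w$, your statement ``at a zero of $w$ of order $k$ one has $\mu\sim-2k/(z-p)+O(1)$'' is a fact about holomorphic functions; for a merely smooth $w$ of two real variables the zero set is generically a curve, not an isolated point, and $[w:w_z]$ could be indeterminate at $(p,\bar p)$ (both vanishing) in ways your argument does not exclude. To close the gap you would need to use the specific structure of an umbilic point (where $\kappa$, not $s$, vanishes) and the extra constraint \eqref{eq-h-m} that an adjoint transform satisfies — neither of which your proof invokes — rather than treating this as a generic Riccati singularity question.
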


Restricting to the isotropic harmonic map, we have the following description.
\begin{theorem}\cite{Ma2006}, \cite{Helein2}, \cite{BW}
Let  $\mathcal{F}_h=Y\wedge\hat{Y}$ be an isotropic harmonic map.
Set $e_1, e_2\in\Gamma(V)$ with
 $Y_{z}+\frac{\mu}{2} Y=\frac{1}{2}(e_1-ie_2)$. Let
$\{\psi_j,j=1,\cdots,n\}$ be a frame of the normal bundle $V^{\perp}$. Assume that $\kappa=\sum_{j=1}^{n}k_j\psi_j,\ \zeta=\sum_{j=1}^{n}\gamma_j\psi_j,\  D_z\psi_j=\sum_{l=1}^{n}b_{jl}\psi_l,\  b_{jl}+b_{lj}=0.
$
Set
\[
F=\left(\frac{1}{\sqrt{2}}(Y+\hat{Y}),\frac{1}{\sqrt{2}}(-Y+\hat{Y}),e_1,e_2,\psi_1,\cdots,\psi_n\right).
\]
Then the Maurer-Cartan form
 $\alpha=F^{-1} \dd F=\alpha^\prime+\alpha^{\prime \prime}$ of $F$ has the structure:
\begin{equation}\label{eq-hm-m-c-h}
\alpha^\prime =\left(      \begin{array}{cc}
                     A_1 & B_1 \\
                     -B_1^tI_{1,1} & A_2 \\
                   \end{array}
                 \right)\dd z,
\end{equation}
with
\[
A_1=\left(
                      \begin{array}{cc}
                        0 & \frac{\mu}{2} \\
                        \frac{\mu}{2} & 0 \\
                      \end{array}
                    \right),\
 B_1=\left(
      \begin{array}{ccccccc}
                       \frac{1+\rho}{2\sqrt{2}} &  \frac{-i-i\rho}{2\sqrt{2}}  & \sqrt{2}\gamma_1 & \cdots & \sqrt{2}\gamma_n\\
                        \frac{1-\rho}{2\sqrt{2}} &  \frac{-i+i\rho}{2\sqrt{2}} &  -\sqrt{2}\gamma_1 & \cdots & -\sqrt{2}\gamma_n\\
      \end{array}
    \right)=\left(
              \begin{array}{c}
                b_1^t \\
                b_2^t \\
              \end{array}
            \right),
\]
and
 \begin{equation}\label{eq-B1-Ma-h}
 B_1B_1^t=0.
\end{equation}

Conversely, if $ \mathcal{F}=Y\wedge\hat Y:U \to  SO^{+}(1,n+3)/(SO^+(1,1)\times SO(n+2))$ is a conformal harmonic map satisfying \eqref{eq-B1-Ma-h}, then $ \mathcal{F}$ is an isotropic harmonic map and $Y$ and $\hat{Y}$  form a pair of adjoint Willmore surfaces at the points they are immersed. Moreover, set
\[
B_1=(b_1\ b_2)^t \hbox{ with } b_1,b_2\in\mathbb{C}^{n+2}.
\]
 Then $Y$ is immersed at the points $(b_1^t+b_2^t)(\bar{b}_1+\bar{b}_2)>0$ and $\hat{Y}$ is immersed  at the points  $(b_1^t-b_2^t)(\bar{b}_1-\bar{b}_2)>0$.
\end{theorem}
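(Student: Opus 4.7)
The plan is to prove both directions by direct Maurer--Cartan computation in the chosen frame, using the structure equations \eqref{eq-stru} together with the explicit formula \eqref{eq-hat-Y-z} for $\hat Y_z$; the converse will then follow by reading off the entries of $\alpha'$ and verifying that the Willmore equation \eqref{eq-Willmore} and the adjoint conditions \eqref{eq-h-1II}, \eqref{eq-h-m} are forced.

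For the forward direction, I would first exploit the Riccati equation \eqref{eq-h-1II}: it makes $\theta=0$ in \eqref{eq-hat-Y-z}, so
\[
\hat Y_z = \tfrac{\mu}{2}\hat Y + \rho\bigl(Y_z+\tfrac{\mu}{2}Y\bigr) + 2\zeta.
\]
Combining with $Y_z = (Y_z+\frac{\mu}{2}Y)-\frac{\mu}{2}Y$ yields
\[
\tfrac{1}{\sqrt 2}(Y+\hat Y)_z = \tfrac{\mu}{2}\cdot\tfrac{1}{\sqrt 2}(-Y+\hat Y) + \tfrac{1+\rho}{\sqrt 2}\bigl(Y_z+\tfrac{\mu}{2}Y\bigr) + \sqrt 2\,\zeta,
\]
and an analogous formula for $\frac{1}{\sqrt 2}(-Y+\hat Y)_z$. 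Writing $Y_z+\frac{\mu}{2}Y = \frac{1}{2}(e_1-ie_2)$ converts the middle term into the $e_1,e_2$ components, producing exactly the first two rows of $B_1$ (with the coefficient $\frac{1\pm\rho}{2\sqrt 2}$ in the $e_1$ slot and $\frac{-i\mp i\rho}{2\sqrt 2}$ in the $e_2$ slot). The $A_1$ block is immediate. For the columns $e_1,e_2$ I would differentiate $(Y_z+\frac{\mu}{2}Y)$ using $Y_{zz}=-\frac{s}{2}Y+\kappa$ and again apply Riccati to see that the contribution of $Y$ and $Y_{\bar z}$ reorganises into the $\frac{1}{\sqrt 2}(Y\pm\hat Y)$ basis, matching $-B_1^t I_{1,1}$ by construction; expansion of $\kappa=\sum k_j\psi_j$ produces $A_2$ in block form as in \eqref{eq-hm-m-c-h}. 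For the columns $\psi_j$ the equation $\psi_z = D_z\psi+\cdots$ of \eqref{eq-stru}, combined with $\zeta=D_{\bar z}\kappa+\frac{\bar\mu}{2}\kappa$ and the expression $Y_{\bar z}=(Y_{\bar z}+\frac{\bar\mu}{2}Y)-\frac{\bar\mu}{2}Y$, delivers the $\sqrt 2\gamma_j,-\sqrt 2\gamma_j$ entries of $B_1$.

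The algebraic identity $B_1B_1^t = 0$ reduces to a routine $2\times 2$ computation: the off-diagonal and diagonal entries of $B_1B_1^t$ all equal $\pm 2\sum_j\gamma_j^2 = \pm 2\langle\zeta,\zeta\rangle$, because the $2\times 2$ block $\bigl(\begin{smallmatrix}(1+\rho)/(2\sqrt 2) & -i(1+\rho)/(2\sqrt 2)\\ (1-\rho)/(2\sqrt 2) & -i(1-\rho)/(2\sqrt 2)\end{smallmatrix}\bigr)$ squares to zero. Hence $B_1B_1^t = 0$ is equivalent to the isotropy condition \eqref{eq-h-m}.

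For the converse, starting from a conformal harmonic $\mathcal{F}=Y\wedge\hat Y$ with $\alpha'$ of the prescribed shape, I would interpret the columns of $F$ as an adapted frame, then reverse--engineer the structure equations: the $A_1$ block reads off $\mu$, the entries of $B_1$ give $\rho$ and the components $\gamma_j$, and the $A_2$ block reads off the normal connection $D_z$. The identities among these quantities imposed by $\dd\alpha'+\alpha'\wedge\alpha'=0$ are precisely \eqref{eq-stru}, \eqref{eq-integ} together with the Riccati equation \eqref{eq-h-1II}, while $B_1B_1^t=0$ recovers \eqref{eq-h-m}. The Willmore equation \eqref{eq-Willmore} then follows either from harmonicity of $\mathcal{F}$ or, equivalently, from the already recovered adjoint structure, and Lemma \ref{umbiliclemma} together with the immersion conditions $(b_1\pm b_2)^t(\bar b_1\pm \bar b_2)>0$ identifies the loci on which $Y$ and $\hat Y$ are immersed (since $b_1+b_2$ encodes $(e_1-ie_2)/\sqrt 2$, whose non--vanishing is the conformality of $Y$, and $b_1-b_2$ encodes $\rho(e_1-ie_2)/\sqrt 2 + 2\zeta$, controlling $\hat Y$). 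The main delicate point will be the book--keeping of the change of basis between $\{Y,Y_z,Y_{\bar z},N\}$ and the Lorentz--orthonormal frame in $F$, and making sure all $\mu,\bar\mu$ terms combine so that the $z$--part $\alpha'$ is free of $\bar\mu$ except through $\rho$ and $\gamma_j$.
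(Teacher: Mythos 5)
Your approach is correct and is the natural direct verification; note that the paper itself cites this theorem from \cite{Ma2006}, \cite{Helein2}, \cite{BW} and does not reprove it, so there is no in-paper argument to compare against. The forward computation is sound: with the Riccati equation \eqref{eq-h-1II} killing $\theta$ in \eqref{eq-hat-Y-z}, expanding $\partial_z\bigl(\tfrac{1}{\sqrt 2}(\pm Y+\hat Y)\bigr)$ does yield the asserted blocks, and your observation that the first two columns of $B_1$ form a matrix $C$ with $CC^t=0$ (not ``$C^2=0$'', which is a harmless slip in phrasing) correctly shows that $B_1B_1^t$ has all entries $\pm 2\langle\zeta,\zeta\rangle$, so \eqref{eq-B1-Ma-h} is equivalent to \eqref{eq-h-m}.

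Two points to tighten. First, a sign: expanding $\partial_z\bigl(\tfrac{1}{\sqrt 2}(-Y+\hat Y)\bigr)$ produces the second column of the block $-B_1^tI_{1,1}$, i.e.\ $-b_2$ rather than $b_2$, so the $e_1$-coefficient there is $\tfrac{\rho-1}{2\sqrt2}$, not $\tfrac{1-\rho}{2\sqrt2}$; this is consistent with the stated $B_1$ once the $I_{1,1}=\mathrm{diag}(-1,1)$ convention is tracked, but your parenthetical ``$\tfrac{1\pm\rho}{2\sqrt2}$ in the $e_1$ slot'' elides it. Second, for the converse, ``$d\alpha'+\alpha'\wedge\alpha'=0$'' is not the right integrability condition---you must use the full Maurer--Cartan equation together with harmonicity of $\mathcal{F}$; and before $B_1B_1^t=0$ can collapse to the single scalar condition \eqref{eq-h-m}, you need to show that conformality plus a gauge choice on the Lorentz 2-plane brings $B_1$ into the specific two-row form $(b_1\ b_2)^t$ built from one $(1,0)$ tangent direction and one normal section. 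As written, your converse paragraph is a correct roadmap rather than a complete argument, but it contains no wrong turn.
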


 \section{Isotropic harmonic maps into $SO^+(1,n+3)/(SO^+(1,1)\times SO(n+2))$}
\label{section3}

In this section we will recall briefly the DPW construction of harmonic maps and applications to the isotropic harmonic maps related to Willmore surfaces. We refer to \cite{Helein, Helein2, Xia-Shen, BW} for more details.

\subsection{The DPW construction of harmonic maps}

\subsubsection{Harmonic maps into an inner symmetric space}

Let $G/K$ be an inner symmetric space with involution $\sigma: G\rightarrow G$ such that $G^{\sigma}\supset K\supset(G^{\sigma})_0$. Let $\pi:G\rightarrow G/K$ be the projection of $G$ into $G/K$. Let $\mathfrak{g}=Lie(G)$ and $\mathfrak{k}=Lie(K)$ be their Lie algebras. We have the Cartan decomposition
$\mathfrak{g}=\mathfrak{k}\oplus\mathfrak{p}, $
$[\mathfrak{k},\mathfrak{k}]\subset\mathfrak{k},$
$[\mathfrak{k},
\mathfrak{p}]\subset\mathfrak{p},$
$[\mathfrak{p},\mathfrak{p}]\subset\mathfrak{k}.
$

Let $ \mathcal{F}:M\rightarrow G/K$ be a conformal harmonic map from a Riemann surface $M$, with $U\subset M$  an open connected subset.
Then there exists a frame $F: U\rightarrow G$ such that $ \mathcal{F} =\pi\circ F$. So we have the Maurer-Cartan form $F^{-1}\dd F= \alpha,$ and  Maurer-Cartan equation
$  \dd \alpha+\frac{1}{2}[\alpha\wedge\alpha]=0.$
Set
$
\alpha=\alpha_0+\alpha_1,$ with  $\alpha_0\in \Gamma(\mathfrak{k}\otimes T^*M), ~
\alpha_1\in \Gamma(\mathfrak{p}\otimes T^*M).$
 Decompose $\alpha_1$ further into the $(1,0)-$part $\alpha_1^\prime$ and the $(0,1)-$part $\alpha_1^{\prime \prime}$. Then set
$
\alpha_{\lambda}=\lambda^{-1}\alpha_{1}^\prime+\alpha_0+\lambda\alpha_{1}^{\prime \prime}, $ with $ \lambda\in  S^1.
$
We have the well-known characterization of harmonic maps:
\begin{lemma} $($\cite{DPW}$)$ The map  $ \mathcal{F}:M\rightarrow G/K$ is harmonic if and only if
\[
 \dd \alpha_{\lambda}+\frac{1}{2}[\alpha_{\lambda}\wedge\alpha_{\lambda}]=0\ \ \hbox{for all}\ \lambda \in  S^1.
\]
\end{lemma}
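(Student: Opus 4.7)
The plan is to expand $\dd\alpha_\lambda + \frac{1}{2}[\alpha_\lambda\wedge\alpha_\lambda]$ as a Laurent polynomial in $\lambda$ and match coefficients. Since $S^1$ is infinite, vanishing of this $\mathfrak{g}$-valued 2-form for every $\lambda\in S^1$ is equivalent to vanishing of each Laurent coefficient separately. Substituting $\alpha_\lambda=\lambda^{-1}\alpha_1'+\alpha_0+\lambda\alpha_1''$ and using the graded symmetry $[A\wedge B]=[B\wedge A]$ for $\mathfrak{g}$-valued 1-forms, one obtains contributions at orders $\lambda^{-2},\lambda^{-1},\lambda^0,\lambda^1,\lambda^2$. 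The outer two, $\tfrac{1}{2}[\alpha_1'\wedge\alpha_1']$ and $\tfrac{1}{2}[\alpha_1''\wedge\alpha_1'']$, vanish automatically on the Riemann surface $M$ since they are $(2,0)$- and $(0,2)$-forms and $\Lambda^{2,0}T^*M=\Lambda^{0,2}T^*M=0$ in complex dimension one. The nontrivial content therefore sits at orders $\lambda^{-1},\lambda^0,\lambda^1$.

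The second step is to compare these three coefficients with the Maurer--Cartan identity $\dd\alpha+\frac{1}{2}[\alpha\wedge\alpha]=0$, which is automatic from $\alpha=F^{-1}\dd F$. Decomposing according to $\mathfrak{g}=\mathfrak{k}\oplus\mathfrak{p}$ and using $[\mathfrak{k},\mathfrak{k}]\subset\mathfrak{k}$, $[\mathfrak{k},\mathfrak{p}]\subset\mathfrak{p}$, $[\mathfrak{p},\mathfrak{p}]\subset\mathfrak{k}$, this identity splits into a $\mathfrak{k}$-part, $\dd\alpha_0+\frac{1}{2}[\alpha_0\wedge\alpha_0]+[\alpha_1'\wedge\alpha_1'']=0$, and a $\mathfrak{p}$-part, $\dd\alpha_1+[\alpha_0\wedge\alpha_1]=0$. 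The $\mathfrak{k}$-part coincides exactly with the $\lambda^0$ coefficient of the expansion (using $\tfrac{1}{2}[\alpha_1\wedge\alpha_1]=[\alpha_1'\wedge\alpha_1'']$), while the $\mathfrak{p}$-part is precisely the sum of the $\lambda^{-1}$ and $\lambda^{1}$ coefficients. Hence two of the three nontrivial orders always hold in the symmetric form $\lambda^{-1}+\lambda^{1}$, and the new content of the loop equation compared to the automatic Maurer--Cartan identity is the requirement that the $\lambda^{-1}$ and $\lambda^{1}$ coefficients vanish \emph{separately}.

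The final step is to match this separate vanishing with the harmonic map equation $\dd*\alpha_1+[\alpha_0\wedge*\alpha_1]=0$. On a Riemann surface the Hodge star acts on $\mathfrak{p}$-valued 1-forms by $*\alpha_1'=-i\alpha_1'$ and $*\alpha_1''=i\alpha_1''$, so the harmonic map equation becomes $(\dd\alpha_1'+[\alpha_0\wedge\alpha_1'])-(\dd\alpha_1''+[\alpha_0\wedge\alpha_1''])=0$, i.e.\ the \emph{difference} of the $\lambda^{-1}$ and $\lambda^{1}$ coefficients. Combined with the always-valid $\mathfrak{p}$-part of Maurer--Cartan (their sum), harmonicity of $\mathcal{F}$ is therefore equivalent to the simultaneous vanishing of both coefficients, which together with the $\lambda^{\pm 2}$ triviality and the $\lambda^0$ identity gives the full loop equation $\dd\alpha_\lambda+\frac{1}{2}[\alpha_\lambda\wedge\alpha_\lambda]=0$. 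The only bookkeeping obstacle is keeping careful track of the sign/symmetry conventions for brackets of Lie-algebra-valued 1-forms and the action of $*$ in bidegrees; no deeper input is needed.
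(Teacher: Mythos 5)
Your argument is correct and is the standard proof of this zero-curvature characterization; the paper itself does not reprove it but cites \cite{DPW}, where essentially this computation appears. The decomposition into Laurent coefficients $\lambda^{-2},\dots,\lambda^{2}$, the bidegree observation that kills the $\lambda^{\pm 2}$ terms on a Riemann surface, the identification of the $\lambda^{0}$ term with the $\mathfrak{k}$-part of the Maurer--Cartan identity and of the sum of the $\lambda^{\pm 1}$ terms with its $\mathfrak{p}$-part, and the identification of the difference with the harmonic map equation $\dd * \alpha_1+[\alpha_0\wedge *\alpha_1]=0$ via $*\alpha_1'=-i\alpha_1'$, $*\alpha_1''=i\alpha_1''$, are all exactly the steps one needs. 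No gaps.
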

As a consequence, for a harmonic map $f$, the equation $
\dd F(z,\lambda)= F(z, \lambda) \, \alpha_{\lambda}
$
with  $F(0,\lambda)=F(0)$, always has a solution, which is   called the {\em extended frame}
 of  $ \mathcal{F}$.

\subsubsection{Two decomposition theorems}

We denote by $SO^+(1,n+3)$ the connected component of the identity of the linear isometry group of $\mathbb{R}^{n+4}_1$.
 Then
\[
\mathfrak{s}o(1,n+3)=\mathfrak{g}=\{X\in \mathfrak{g}l(n+4,\mathbb{R})|X^tI_{1,n+3}+I_{1,n+3}X=0\}.
\]
Define the involution
 \[
\begin{array}{ll}
\sigma:  SO^+(1,n+3)& \rightarrow SO^+(1,n+3)\\
 \ \ \ \ \ \ \ A&\mapsto DAD^{-1},
\end{array}
\quad \quad \hbox{where} \quad
D=\left(
         \begin{array}{ccccc}
             -I_{2} & 0 \\
            0 & I_{n+2} \\
         \end{array}
       \right).
\]
We have $SO^+(1,n+3)^{\sigma}\supset SO^+(1,1)\times SO(n+2)= (SO^+(1,n+3)^{\sigma})_0$. We also have
\[
\mathfrak{g}=
\left\{\left(
                   \begin{array}{cc}
                     A_1 & B_1 \\
                     -B_1^tI_{1,1} & A_2 \\
                   \end{array}
                 \right)
 |A_1^tI_{1,1}+I_{1,1}A_1=0,A_2+A_2^t=0\right\}=\mathfrak{k}\oplus\mathfrak{p},
\]
with\[
\mathfrak{k}=\left\{\left(
                   \begin{array}{cc}
                     A_1 &0 \\
                     0 & A_2 \\
                   \end{array}
                 \right)
 |A_1^tI_{1,1}+I_{1,1}A_1 =0, A_2+A_2^t=0\right\}, ~~
\  \mathfrak{p}=\left\{\left(
                   \begin{array}{cc}
                   0 & B_1 \\
                     -B_1^tI_{1,1} & 0 \\
                   \end{array}
                 \right)
\right\}.
\]
Let \[G^{\mathbb{C}}=SO^+(1,n+3,\mathbb{C}) := \{X \in SL(n+4,\C) ~|~ X^t I_{1,n+3} X =I_{1,n+3}\}, ~ \mathfrak{g}^{\mathbb{C}}=\mathfrak{so}(1,n+3,\mathbb{C}).\]
 Extend $\sigma$ to an inner involution of $G^{\mathbb{C}}$  with fixed point group $K^{\mathbb{C}}=S(O^+(1,1,\mathbb{C})\times O(n+2,\C))$.

Let $\Lambda G^{\mathbb{C}}_{\sigma}$ denote the group of loops in $G^C =SO^+(1,n+3,\mathbb{C})$  twisted by $\sigma$. Let $\Lambda^+G^{\mathbb{C}}_{\sigma}$  denote the subgroup of loops which extend holomorphically to the unit disk $|\lambda|\leq1$. We also
need the subgroup
$
\Lambda_B^+ G^{\mathbb{C}}_{\sigma}:=\{\gamma\in\Lambda^+G^{\mathbb{C}}_{\sigma}~|~\gamma|_{\lambda=0}\in \mathfrak{B} \}
$, where
$\mathfrak{B}\subset K^{\mathbb{C}}$ is defined from the Iwasawa decomposition
$K^{\mathbb{C}}=K\cdot\mathfrak{B}.
$
In this case,
\[
\mathfrak{B}=\left\{\left(
                   \begin{array}{cc}
                     \mathrm{b}_1 & 0 \\
                     0 & \mathrm{b}_2 \\
                   \end{array}
                 \right)\ \left|\ \mathrm{b}_1=\left(
                        \begin{array}{cc}
                          \cos\theta & i\sin\theta \\
                          i\sin\theta & \cos\theta \\
                        \end{array}
                      \right), \theta\in  \real \mod{2\pi{\mathbb  Z}},\ ~\hbox{and } ~\mathrm{b}_2\in \mathfrak{B}_2\right.\right
\}.
\]
Here  $\mathfrak{B}_2$  is the solvable subgroup of $SO(n+2,\mathbb{C})$ such that $SO(n+2,\mathbb{C})=SO(n+2)\cdot\mathfrak{B}_2$. We refer to
 Lemma 4 of \cite{Helein} for more details.
\begin{theorem}\label{thm-iwasawa} Theorem 5 of \cite{Helein}, see also \cite{Xia-Shen},  \cite{DPW}, \cite{PS}  (Iwasawa decomposition):
The multiplication $\Lambda G_{\sigma}\times \Lambda^+_{B}G^{\mathbb{C}}\rightarrow\Lambda G^{\mathbb{C}}_{\sigma}$ is a real analytic diffeomorphism onto the open dense subset $\Lambda G_{\sigma}\cdot \Lambda^+_{B}G^{\mathbb{C}} \subset\Lambda G^{\mathbb{C}}_{\sigma}$.
\end{theorem}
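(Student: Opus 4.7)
The plan is to follow the standard loop group argument for Iwasawa-type decompositions, adapted to the non-compact setting of $G=SO^+(1,n+3)$. The strategy has three main pieces: (i) a Lie algebra splitting at the identity, (ii) propagation by translation to upgrade this to a local diffeomorphism property, and (iii) a separate analysis to prove injectivity and to characterize the image as an open dense subset rather than all of $\Lambda G^{\mathbb{C}}_{\sigma}$.

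First I would establish the Lie algebra decomposition
\[
\Lambda\mathfrak{g}^{\mathbb{C}}_{\sigma}=\Lambda\mathfrak{g}_{\sigma}\oplus\Lambda^{+}_{\mathfrak{b}}\mathfrak{g}^{\mathbb{C}}_{\sigma}
\]
as a topological direct sum of Banach (or Fréchet) subspaces. The strictly positive Fourier modes, twisted appropriately by $\sigma$, account for one summand; the constant term is forced to lie in $\mathfrak{k}^{\mathbb{C}}$ by twisting and is split by the finite-dimensional Iwasawa decomposition $\mathfrak{k}^{\mathbb{C}}=\mathfrak{k}\oplus\mathfrak{b}$ coming from $K^{\mathbb{C}}=K\cdot\mathfrak{B}$; the reality condition $\overline{X(1/\bar\lambda)}=-X(\lambda)^{\ast}$ then pairs the negative Fourier modes with their conjugates into $\Lambda\mathfrak{g}_{\sigma}$. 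Combined with the exponential map and the Banach Lie group structure of $\Lambda G^{\mathbb{C}}_{\sigma}$, this algebraic splitting furnishes an open neighborhood of the identity on which $(g,b)\mapsto gb$ is a real-analytic local diffeomorphism, via the inverse function theorem. Left translation by $\Lambda G_{\sigma}$ and right translation by $\Lambda^{+}_{B}G^{\mathbb{C}}_{\sigma}$ then propagates this to a local diffeomorphism along the entire domain.

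Injectivity is easy: if $g_1b_1=g_2b_2$, then $\gamma:=g_2^{-1}g_1=b_2b_1^{-1}\in\Lambda G_{\sigma}\cap\Lambda^{+}_{B}G^{\mathbb{C}}_{\sigma}$. Since $\gamma$ extends holomorphically to $|\lambda|\leq 1$ and satisfies the reality condition on $S^1$, Liouville forces $\gamma$ to be a constant loop, and that constant lies in $\mathfrak{B}\cap K=\{e\}$, the triviality of the intersection being exactly what the finite-dimensional Iwasawa decomposition $K^{\mathbb{C}}=K\cdot\mathfrak{B}$ asserts. Thus the multiplication map is globally injective, and combined with the local diffeomorphism property above it is a real-analytic diffeomorphism onto its image, which is therefore open in $\Lambda G^{\mathbb{C}}_{\sigma}$.

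The hard part is density. Here the non-compactness of $G=SO^+(1,n+3)$ matters in an essential way: unlike the compact case of Pressley–Segal, the product $\Lambda G_{\sigma}\cdot\Lambda^{+}_{B}G^{\mathbb{C}}_{\sigma}$ is \emph{not} all of $\Lambda G^{\mathbb{C}}_{\sigma}$, and one must identify the open Iwasawa cell and show that it is dense. I would argue density by reducing the factorization problem, for a given $\phi\in\Lambda G^{\mathbb{C}}_{\sigma}$, to a Riemann–Hilbert problem on $S^1$ for the unknown factor $b\in\Lambda^{+}_{B}G^{\mathbb{C}}_{\sigma}$: the problem is solvable whenever a certain Toeplitz-type operator is invertible, and invertibility is an open condition satisfied at the identity, hence on an open dense subset by standard Fredholm perturbation theory. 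This is the route followed in Hélein and in the proof of Theorem 5 cited; the subtlety is keeping track of the specific solvable subgroup $\mathfrak{B}$ dictated by the non-compact reductive factor $SO^+(1,1,\mathbb{C})$, which is responsible for the fact that one gets an open dense cell rather than the whole loop group.
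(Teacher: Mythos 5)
This theorem is not proved in the paper; it is quoted verbatim as Theorem~5 of H\'elein (with pointers to Xia--Shen, DPW, Pressley--Segal), so there is no in-text proof to compare against. Your sketch follows what is indeed the standard line of argument for loop group Iwasawa splittings, and the first two steps (Lie algebra splitting $\Lambda\mathfrak{g}^{\mathbb{C}}_{\sigma}=\Lambda\mathfrak{g}_{\sigma}\oplus\Lambda^{+}_{\mathfrak{b}}\mathfrak{g}^{\mathbb{C}}_{\sigma}$ via the finite--dimensional Iwasawa split of the constant mode, inverse function theorem at the identity, propagation by translation) and the injectivity step (Liouville plus $\mathfrak{B}\cap K=\{e\}$) are correct and essentially the proof in the cited sources.

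Two points need repair. First, the reality condition you write, $\overline{X(1/\bar\lambda)}=-X(\lambda)^{\ast}$, is the skew-Hermitian condition for the unitary real form; it is not the one for $G=SO^{+}(1,n+3)\subset G^{\mathbb{C}}=SO(n+4,\mathbb{C})$. Here $\mathfrak{g}=\mathfrak{so}(1,n+3)$ is the set of real matrices with $X^{t}I_{1,n+3}+I_{1,n+3}X=0$, so the real form of $\mathfrak{g}^{\mathbb{C}}$ is cut out by ordinary complex conjugation, and the loop reality condition is of the form $\overline{X(1/\bar\lambda)}=X(\lambda)$. The pairing of positive and negative Fourier modes you describe still works, but the fixed-point operator is conjugation, not $X\mapsto -X^{\ast}$, and since $K=SO^{+}(1,1)\times SO(n+2)$ is non-compact the constant-mode split $\mathfrak{k}^{\mathbb{C}}=\mathfrak{k}\oplus\mathfrak{b}$ already requires the explicit choice of solvable $\mathfrak{B}$ recorded in the paper, rather than the usual upper-triangular Borel.

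Second, and more seriously, the density step is not a consequence of ``Fredholm perturbation theory.'' Openness of invertibility for the associated Toeplitz operator gives exactly that the image is \emph{open}; it says nothing about density. Density is the genuinely non-compact phenomenon here. The argument in the literature either (a) exploits the Birkhoff stratification: one shows that within the dense big Birkhoff cell of $\Lambda G^{\mathbb{C}}_{\sigma}$ the complement of the Iwasawa big cell is a proper closed real-analytic subset (using that the Fredholm determinant of the relevant Toeplitz operator is a real-analytic function which is nonzero at the identity, hence not identically zero, so its zero set has empty interior), or (b) analyses Iwasawa cells directly in the spirit of Kellersch/Brander--Dorfmeister. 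You should replace ``Fredholm perturbation theory'' with an explicit analyticity-of-the-determinant or stratification argument; as written that paragraph asserts the conclusion rather than proving it.
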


Let $\Lambda^-_*G^{\mathbb{C}}_{\sigma}$ denote the loops that extend holomorphically into $\infty$ and take values $I$ at infinity.
\begin{theorem} \label{thm-birkhoff}Theorem 7 of \cite{Helein}, see also \cite{Xia-Shen}, \cite{DPW}, \cite{PS}  (Birkhoff decomposition):
The multiplication $\Lambda^-_* G^{\mathbb{C}}_{\sigma}\times \Lambda^+G^{\mathbb{C}}\rightarrow\Lambda G^{\mathbb{C}}_{\sigma}$ is a real analytic diffeomorphism onto the open subset $ \Lambda^-_* G^{\mathbb{C}}_{\sigma}\cdot \Lambda^+G^{\mathbb{C}}$ (the big cell) of $\Lambda G^{\mathbb{C}}_{\sigma}$.

\end{theorem}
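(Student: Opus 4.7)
The plan is to follow the standard loop-group approach due to Pressley--Segal, adapted to the $\sigma$-twisted setting. The argument has three parts: uniqueness of the factorization, existence on an open subset, and real analyticity of the factorization map. Uniqueness is the cleanest and I would do it first, via a Liouville argument. Suppose $\gamma = \gamma_-^1 \gamma_+^1 = \gamma_-^2 \gamma_+^2$ are two factorizations with $\gamma_-^i \in \Lambda^-_*G^{\mathbb{C}}_\sigma$ and $\gamma_+^i \in \Lambda^+G^{\mathbb{C}}_\sigma$. Then $(\gamma_-^2)^{-1}\gamma_-^1 = \gamma_+^2(\gamma_+^1)^{-1}$; the left-hand side extends holomorphically to $\{|\lambda|>1\}\cup\{\infty\}$ with value $I$ at $\infty$, and the right-hand side extends holomorphically to $\{|\lambda|\le 1\}$. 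Patching, one obtains an $SL(n+4,\mathbb{C})$-valued holomorphic map on $\mathbb{CP}^1$ equal to $I$ at $\infty$, hence constantly $I$ by Liouville. This forces $\gamma_-^1=\gamma_-^2$ and $\gamma_+^1=\gamma_+^2$.

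For existence and openness of the image, I would use the Grassmannian model. Embed $\Lambda G^{\mathbb{C}}$ into the restricted Grassmannian of the polarized Hilbert space $L^2(S^1,\mathbb{C}^{n+4}) = H_+ \oplus H_-$ via $\gamma\mapsto\gamma\cdot H_+$. A Birkhoff factorization $\gamma = \gamma_-\gamma_+$ exists exactly when the projection $\gamma\cdot H_+ \to H_+$ along $H_-$ is a bounded invertible map; transversality of two closed subspaces is an open condition on $\mathrm{Gr}$, so the preimage in $\Lambda G^{\mathbb{C}}_\sigma$, which is precisely the big cell, is open. When transversality holds one reconstructs $\gamma_+$ from this projection and sets $\gamma_- = \gamma\gamma_+^{-1}$; the normalization $\gamma_-(\infty) = I$ is automatic from the construction.

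For real analyticity, the multiplication map $m:\Lambda^-_*G^{\mathbb{C}}_\sigma \times \Lambda^+G^{\mathbb{C}}_\sigma \to \Lambda G^{\mathbb{C}}_\sigma$ is manifestly real analytic, and at the identity its differential is the direct-sum decomposition $\Lambda \mathfrak{g}^{\mathbb{C}}_\sigma = \Lambda^-_*\mathfrak{g}^{\mathbb{C}}_\sigma \oplus \Lambda^+\mathfrak{g}^{\mathbb{C}}_\sigma$, obtained by Laurent-series splitting of the $\sigma$-twisted loop algebra. This is a Banach-space isomorphism, so the inverse function theorem gives a real-analytic local inverse near $I$, which extends to the entire big cell by left translation, since $\Lambda G^{\mathbb{C}}_\sigma$ acts analytically on itself and the big cell is left-invariant under $\Lambda^-_*G^{\mathbb{C}}_\sigma$ in an obvious compatible way.

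The main technical obstacle I anticipate is the verification that the Grassmannian construction respects the $\sigma$-twisting: one must show that the reconstructed factors satisfy $\gamma_\pm(-\lambda) = D\gamma_\pm(\lambda)D^{-1}$. The correct move is to choose the polarization $H_\pm$ adapted to the $\mathbb{Z}/2$-grading on $\mathfrak{g}^{\mathbb{C}}$ induced by $\sigma$, so that the Fourier splitting itself intertwines with the twisting; with this adapted polarization, $\sigma$-invariance of $\gamma$ forces $\sigma$-invariance of both factors by uniqueness. Once this compatibility is established, every step above descends from the untwisted case to the twisted one without further modification, and the statement follows.
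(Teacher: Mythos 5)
The paper does not supply a proof of this theorem at all: it is quoted verbatim as Theorem~7 of H\'elein \cite{Helein} and attributed further to Pressley--Segal \cite{PS} and DPW \cite{DPW}, so there is no in-paper argument to compare against. Your sketch follows exactly the route taken in those sources (Liouville uniqueness, Grassmannian model for existence and openness, Banach inverse function theorem for analyticity, descent to the twisted subgroup via uniqueness), so the plan is sound and matches the cited literature rather than deviating from it.

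Two small imprecisions are worth flagging, since in a full write-up they would need repair. First, your extension of the local real-analytic inverse ``to the entire big cell by left translation'' by $\Lambda^-_*G^{\mathbb C}_\sigma$ alone does not suffice: left translation by $(\gamma_-^0)^{-1}$ moves a big-cell point $\gamma_-^0\gamma_+^0$ to $\gamma_+^0$, which need not lie in the neighborhood of $I$ where your local inverse is defined. The correct move is a two-sided normalization: given $\gamma$ near $\gamma_0=\gamma_-^0\gamma_+^0$, set $\tilde\gamma=(\gamma_-^0)^{-1}\gamma(\gamma_+^0)^{-1}$, which is near $I$, factor $\tilde\gamma=\tilde\gamma_-\tilde\gamma_+$ there, and read off $\gamma_-=\gamma_-^0\tilde\gamma_-$, $\gamma_+=\tilde\gamma_+\gamma_+^0$. (Alternatively, the Grassmannian model already yields analyticity globally on the big cell with no IFT patching, which is the cleaner route and the one \cite{PS} actually uses.) Second, in the Grassmannian reconstruction it is $\gamma_-$, not $\gamma_+$, that one reads directly from the graph of $\gamma H_+$ over $H_+$; one then sets $\gamma_+=\gamma_-^{-1}\gamma$ and verifies it extends holomorphically into the disk. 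Your phrasing reverses the order, which would leave unexplained where the normalization $\gamma_-(\infty)=I$ comes from. Neither issue reflects a wrong idea, but both need to be stated correctly for the argument to close.
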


\subsubsection{The DPW construction and Wu's formula}

Here we recall the DPW construction for harmonic maps. Let $\mathbb{D}\subset\mathbb{C}$ be a disk or $\mathbb{C}$ itself, with complex coordinate $z$.
\begin{theorem} \label{thm-DPW} \cite{DPW}
\begin{enumerate}
\item Let $ \mathcal{F}:\mathbb{D}\rightarrow G/K$ be a harmonic map with an extended frame $F(z,\bar{z},\lambda)\in \Lambda G_{\sigma}$ and $F(0,0,\lambda)=I$. Then there exists a Birkhoff decomposition
\[
F_-(z,\lambda)=F(z,\bar{z},\lambda)F_+(z,\bar{z},\lambda),~ \hbox{ with }~ F_+\in\Lambda^+G^{\mathbb{C}}_{\sigma},
\]
such that $F_-(z,\lambda):\mathbb{D} \rightarrow\Lambda^-_*G^{\mathbb{C}}_{\sigma}$ is meromorphic. Moreover, the Maurer-Cartan form of $F_-$ is of the form
\[
\eta=F_-^{-1}\dd F_-=\lambda^{-1}\eta_{-1}(z)\dd z,
\]
with $\eta_{-1}$ independent of $\lambda$. The $1$-form $\eta$ is called the normalized potential of $ \mathcal{F}$.
\item Let $\eta$ be a $\lambda^{-1}\cdot\mathfrak{p}-$valued meromorphic 1-form on $\mathbb{D}$. Let $F_-(z,\lambda)$ be a solution to $F_-^{-1}\dd F_-=\eta$, $F_-(0,\lambda)=I$. Then on an open subset $\mathbb{D}_{\mathfrak{I}}$ of $\mathbb{D}$ one has
\[
F_-(0,\lambda)=\tilde{F}(z,\bar{z},\lambda)\cdot \tilde{F}_+(z,\bar{z},\lambda),\ \hbox{ with }\ \tilde{F}\in\Lambda G_{\sigma},\ \tilde{F}_+\in\Lambda ^+_{B} G^{\mathbb{C}}_{\sigma}.
\]
This way, one obtains an extended frame $\tilde{F}(z,\bar{z},\lambda)$ of some harmonic map from  $\mathbb{D}_{\mathfrak{I}}$  to $G/K$ with $\tilde{F}(0,\lambda)=I$. Moreover, all harmonic maps can be obtained in this way, since these two procedures are inverse to each other if the normalization at some based point is used.
\end{enumerate}
\end{theorem}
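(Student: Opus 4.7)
The plan is to combine the Birkhoff and Iwasawa decompositions (Theorems \ref{thm-birkhoff} and \ref{thm-iwasawa}) applied pointwise in $z$ with a degree-in-$\lambda$ argument in the twisted loop group, followed by appeal to the harmonicity characterization lemma above.

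For part (1), I start with the extended frame $F(z,\bar z,\lambda)$. On the open dense set where $F(z,\bar z,\cdot)$ lies in the Birkhoff big cell, Theorem \ref{thm-birkhoff} yields a unique factorization $F=F_-F_+^{-1}$ with $F_-\in\Lambda^-_* G^{\mathbb{C}}_\sigma$ and $F_+\in\Lambda^+ G^{\mathbb{C}}_\sigma$, so that $F_-=F\cdot F_+$. To show $F_-$ is holomorphic in $z$, differentiate this identity to obtain
\[
F_-^{-1}F_{-,\bar z}=F_+^{-1}\bigl(F^{-1}F_{\bar z}\bigr)F_+ + F_+^{-1}F_{+,\bar z}.
\]
The $(0,1)$-part of $F^{-1}dF$ equals $\alpha_0''+\lambda\alpha_1''$, which is a positive loop; conjugation by the positive loop $F_+$ preserves this property, and $F_+^{-1}F_{+,\bar z}$ is manifestly a positive loop. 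Hence the right-hand side lies in $\Lambda^+\mathfrak{g}^{\mathbb{C}}_\sigma$. On the other hand, $F_-\in\Lambda^-_*$ forces $F_-^{-1}F_{-,\bar z}$ to be a strictly negative loop. Since $\Lambda^-_*\cap\Lambda^+=\{0\}$ at the Lie algebra level, both sides vanish and $F_{-,\bar z}=0$. The same accounting applied to the $(1,0)$-part $\lambda^{-1}\alpha_1'+\alpha_0'$ shows that $F_-^{-1}F_{-,z}$ has Fourier modes only at $\lambda^{-1}$, and the twisting condition $\sigma(F_-(\lambda))=F_-(-\lambda)$ places the coefficient in $\mathfrak{p}$. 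Thus $\eta=\lambda^{-1}\eta_{-1}(z)\,dz$ with $\eta_{-1}$ meromorphic, the poles arising exactly where $F(z,\bar z,\cdot)$ leaves the big cell.

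For part (2), I solve the linear system $dF_-=F_-\eta$ with initial condition $F_-(0,\lambda)=I$; since $\eta$ is meromorphic in $z$ and $\lambda^{-1}\cdot\mathfrak{p}$-valued, the solution $F_-(z,\lambda)$ takes values in $\Lambda^-_* G^{\mathbb{C}}_\sigma$ away from the singular set of $\eta$. On the open subset $\mathbb{D}_\mathfrak{I}$ where $F_-$ lies in the Iwasawa big cell, Theorem \ref{thm-iwasawa} supplies a unique factorization $F_-=\tilde F\cdot\tilde F_+$ with $\tilde F\in\Lambda G_\sigma$ and $\tilde F_+\in\Lambda^+_B G^{\mathbb{C}}_\sigma$. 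The gauged Maurer-Cartan form
\[
\tilde F^{-1}d\tilde F=\tilde F_+\,\eta\,\tilde F_+^{-1}+\tilde F_+\,d\tilde F_+^{-1}
\]
a priori carries $\lambda$-modes of order $\geq -1$; the reality condition defining $\Lambda G_\sigma$ then symmetrizes the positive and negative modes, leaving only powers $\lambda^{\pm 1}$ and $\lambda^0$. Writing the result in the form $\lambda^{-1}\beta_1'+\beta_0+\lambda\beta_1''$ exhibits $\tilde F$ as an extended frame, and the harmonicity characterization lemma then identifies $\tilde{\mathcal{F}}=\pi\circ\tilde F|_{\lambda=1}$ as a harmonic map. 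Uniqueness of both normalizations (Birkhoff with $F_-(0,\lambda)=I$, Iwasawa with $\tilde F(0,\lambda)=I$) immediately implies that the two procedures are mutual inverses.

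The main obstacle is the loop-group degree bookkeeping: one must know that in the twisted loop group the intersection of the positive and strictly negative subgroups reduces to the identity, and that the reality condition in $\Lambda G_\sigma$ combined with a one-sided bound on $\lambda$-modes forces a two-sided bound. These facts rest on the uniqueness assertions inside Theorems \ref{thm-iwasawa} and \ref{thm-birkhoff} and on careful tracking of the $\sigma$-twisting that places even and odd $\lambda$-powers in $\mathfrak{k}$ and $\mathfrak{p}$ respectively. Once these structural facts are in hand, the remainder of the argument is routine computation with Maurer-Cartan forms.
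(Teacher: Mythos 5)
The paper does not prove this theorem; it is quoted verbatim as a result of Dorfmeister--Pedit--Wu (the cited reference \cite{DPW}), so there is no in-paper argument to compare against. Your proposal reconstructs the standard DPW proof, and the overall architecture is sound: the degree count on $F_-^{-1}dF_-$ via the splitting $F_-=FF_+$ correctly forces $F_{-,\bar z}=0$ and pins $F_-^{-1}F_{-,z}$ to the $\lambda^{-1}$ mode, and the gauging computation in part (2) combined with the reality of $\tilde F$ correctly bounds the Fourier modes of $\tilde\alpha$ to $\{-1,0,1\}$, yielding an extended frame.

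One step that you gesture at but do not actually establish is the \emph{meromorphicity} of $F_-$ on all of $\mathbb{D}$. Your argument shows that $F_-$ is holomorphic in $z$ on the open set where $F(z,\bar z,\cdot)$ lies in the Birkhoff big cell, and you say the poles ``arise exactly where $F$ leaves the big cell'' --- but a priori $F_-$ is simply undefined there, and nothing in the pointwise Birkhoff decomposition tells you it extends meromorphically (rather than, say, having an essential singularity or worse). In the DPW framework this requires a separate argument: one shows (e.g., via the Grassmannian model or a tau-function/determinant argument) that the failure locus is the zero set of a holomorphic function and that the matrix entries of $F_-$ are quotients of holomorphic functions. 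Without this the conclusion ``$F_-:\mathbb{D}\to\Lambda^-_*G^{\mathbb{C}}_\sigma$ is meromorphic'' is asserted but not proved. The rest of the proposal (the reality symmetrization of modes in part (2), the uniqueness/inverse statement via the normalization at the base point) is correct in outline, though the phrase ``the reality condition symmetrizes the positive and negative modes'' would benefit from being spelled out as $\tilde\alpha_k=\overline{\tilde\alpha_{-k}}$ on $\lambda\in S^1$, which together with $\tilde\alpha''$ being a positive loop forces $\tilde\alpha'$ to have modes in $\{-1,0\}$ only.
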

The normalized potential can be determined in the following way. Let  $f$ and $F$ be as above. Let $\alpha_{\lambda}=F^{-1}\dd F$. Let $\delta_1$ and $\delta_0$ denote the sum of the holomorphic terms of $z$ about $z=0$ in the Taylor expansion of $\alpha_1^\prime (\frac{\partial}{\partial z})$ and  $\alpha_0^\prime (\frac{\partial}{\partial z})$.
 \begin{theorem} \label{thm-wu} \cite{Wu} (Wu's formula) We retain the notations in Theorem \ref{thm-DPW}. The the normalized potential of $ \mathcal{F}$ with respect to the base point $0$ is given by
\begin{equation}\label{eq-Wu}
 \eta=\lambda^{-1}F_0(z)\delta_1F_0(z)^{-1} \dd z,
\end{equation}
where $F_0(z):\mathbb{D}\rightarrow G^{\mathbb{C}}$ is the solution to $F_0(z)^{-1}\dd F_0(z)=\delta_0 \dd z$, $F_0(0)=I$.
\end{theorem}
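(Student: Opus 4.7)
The plan is to determine $\eta$ by Birkhoff-factoring the extended frame restricted to the ``holomorphic slice'' $\bar z=0$ and then invoking uniqueness of the Birkhoff decomposition. Since $\eta=F_-^{-1}\dd F_-$ is meromorphic in $z$ alone, $F_-$ is already determined by $F(z,0,\lambda)$, which makes sense because the extended frame is real analytic in $z,\bar z$. Plugging $\bar z=0$ into $F^{-1}\dd F=\alpha_\lambda$ gives the ODE
\[
F(z,0,\lambda)^{-1}\partial_z F(z,0,\lambda)=\lambda^{-1}\delta_1(z)+\delta_0(z),
\]
with $\delta_1$ and $\delta_0$ the $z$-holomorphic Taylor parts of $\alpha_1'(\partial_z)$ and $\alpha_0'(\partial_z)$ about the base point.

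Next I would manufacture a candidate Birkhoff decomposition by hand. Let $F_0(z)$ be the unique solution of $F_0^{-1}\dd F_0=\delta_0\,\dd z$ with $F_0(0)=I$; since $\delta_0\in\mathfrak{k}^{\mathbb C}$ the tangent vector $F_0\delta_0$ keeps $F_0(z)$ inside $K^{\mathbb C}$, so, viewed as a constant loop, $F_0(z)$ lies in $\Lambda^+G^{\mathbb C}_\sigma$. Let $\tilde F_-(z,\lambda)$ solve
\[
\tilde F_-^{-1}\dd\tilde F_-=\lambda^{-1}F_0(z)\delta_1(z)F_0(z)^{-1}\,\dd z,\qquad \tilde F_-(0,\lambda)=I.
\]
Because $K^{\mathbb C}$ preserves $\mathfrak{p}^{\mathbb C}$ under conjugation, the right-hand side takes values in $\lambda^{-1}\mathfrak{p}^{\mathbb C}$; expanding the solution as a power series in $\lambda^{-1}$ shows that $\tilde F_-$ carries only non-positive Fourier modes and equals $I$ at $\lambda=\infty$, so $\tilde F_-\in\Lambda^-_*G^{\mathbb C}_\sigma$.

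A direct gauge computation then gives
\[
(\tilde F_-F_0)^{-1}\partial_z(\tilde F_-F_0)=F_0^{-1}\bigl(\lambda^{-1}F_0\delta_1F_0^{-1}\bigr)F_0+F_0^{-1}(F_0\delta_0)=\lambda^{-1}\delta_1+\delta_0,
\]
which matches the ODE for $F(z,0,\lambda)$; together with the common initial value $I$ at $z=0$, ODE uniqueness forces $F(z,0,\lambda)=\tilde F_-(z,\lambda)F_0(z)$. Since the right-hand side factors through $\Lambda^-_*G^{\mathbb C}_\sigma\cdot\Lambda^+G^{\mathbb C}_\sigma$, uniqueness in Theorem \ref{thm-birkhoff} identifies it with the DPW Birkhoff decomposition of $F(z,0,\lambda)$; hence $\tilde F_-=F_-$ and $\eta=\lambda^{-1}F_0\delta_1F_0^{-1}\,\dd z$, as claimed. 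The one delicate point I anticipate is justifying the substitution $\bar z=0$, which rests on the real analyticity of the extended frame and lets one treat $z,\bar z$ as independent complex variables so that the Birkhoff decomposition is carried out slice by slice.
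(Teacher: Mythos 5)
The paper does not actually prove Theorem \ref{thm-wu}: it cites Wu \cite{Wu} and uses the result as a black box, so there is no internal proof to compare against. That said, your argument is correct and is essentially Wu's original proof: restrict the extended frame to the antiholomorphic slice $\bar z=0$ (after complexifying the real-analytic variables $z,\bar z$), observe that the restricted Maurer--Cartan form is exactly $\lambda^{-1}\delta_1+\delta_0$, manufacture the candidate factorization $\tilde F_-F_0$ by solving the $\mathfrak k^{\mathbb C}$-part and then the gauged $\lambda^{-1}\mathfrak p^{\mathbb C}$-part, check it satisfies the same ODE with the same initial condition, and conclude by uniqueness of the Birkhoff big-cell factorization (Theorem \ref{thm-birkhoff}) that $\tilde F_-=F_-$. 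Two points you flag as delicate are indeed the ones that deserve a sentence of justification: (i) the passage from real-analytic dependence on $(z,\bar z)$ to a holomorphic extension in two independent complex variables, which is what lets you set $\bar z=0$ and still have $F_-(z,\lambda)F_+(z,0,\lambda)^{-1}$ be a genuine Birkhoff decomposition of the restricted (no longer $\Lambda G_\sigma$-valued, but still $\Lambda G^{\mathbb C}_\sigma$-valued) frame; and (ii) that $\tilde F_-$ really lies in $\Lambda^-_*G^{\mathbb C}_\sigma$, which needs both the Fourier-mode count from Picard iteration and the twisting condition, the latter following because $F_0\in K^{\mathbb C}$ is $\sigma$-fixed and $\delta_1\in\mathfrak p^{\mathbb C}$ is $\sigma$-anti-fixed, so the $1$-form $\lambda^{-1}F_0\delta_1F_0^{-1}\dd z$ is twisted. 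One phrasing to tighten: it is not that ``$\eta$ meromorphic in $z$ alone implies $F_-$ is determined by $F(z,0,\lambda)$''; rather, $F_-$ is $\bar z$-independent by construction, and evaluating the holomorphically extended identity $F=F_-F_+^{-1}$ at $\bar z=0$ exhibits $F_-$ as the negative factor of the slice, which is what your ODE comparison then pins down.
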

 \begin{lemma}\label{lemma-conj}   We retain the notations in Theorem \ref{thm-wu}. Let $Q\in K$ and $Q\mathcal{F}$ be a transform of $\mathcal{F}$ in $G/K$. Then the normalized potential of $Q\mathcal{F}$ with respect to the base point $0$ is
 \[\eta_{Q}=Q\eta Q^{-1}.\]
\end{lemma}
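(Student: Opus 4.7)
The plan is to track how every ingredient in Wu's formula transforms when $\mathcal{F}$ is replaced by $Q\mathcal{F}$ for the constant element $Q\in K$. First I would observe that if $F(z,\bar z,\lambda)$ is the extended frame of $\mathcal{F}$ with $F(0,0,\lambda)=I$, then $\tilde F:=QFQ^{-1}$ again satisfies $\tilde F(0,0,\lambda)=I$, and since $Q^{-1}K=K$ it projects to $Q\mathcal{F}$; hence $\tilde F$ is the extended frame of $Q\mathcal{F}$ based at $0$. Its Maurer--Cartan form is $\tilde\alpha_\lambda=Q\alpha_\lambda Q^{-1}$, and because $\mathrm{Ad}(Q)$ preserves the Cartan decomposition $\mathfrak{g}=\mathfrak{k}\oplus\mathfrak{p}$ (as $Q\in K$) and commutes trivially with the $(1,0)/(0,1)$ splitting (it carries no $z$-dependence), we obtain $\tilde\alpha_1'=Q\alpha_1'Q^{-1}$ and $\tilde\alpha_0'=Q\alpha_0'Q^{-1}$. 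Consequently the holomorphic Taylor parts at $z=0$ entering Wu's formula satisfy $\tilde\delta_j=Q\delta_j Q^{-1}$ for $j=0,1$.

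Next I would feed this into Wu's formula \eqref{eq-Wu}. The candidate $\tilde F_0(z):=QF_0(z)Q^{-1}$ satisfies $\tilde F_0(0)=I$ and $\tilde F_0^{-1}\,\dd\tilde F_0 = Q\delta_0 Q^{-1}\,\dd z = \tilde\delta_0\,\dd z$, so by uniqueness it is the solution attached to $Q\mathcal{F}$. Substituting into \eqref{eq-Wu} yields
\[
\eta_{Q} = \lambda^{-1}\tilde F_0\,\tilde\delta_1\,\tilde F_0^{-1}\,\dd z = \lambda^{-1}\,QF_0Q^{-1}\cdot Q\delta_1Q^{-1}\cdot QF_0^{-1}Q^{-1}\,\dd z = Q\eta Q^{-1},
\]
which is the claimed identity.

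As a conceptual cross-check, one can bypass Wu's formula and argue directly from the Birkhoff decomposition (Theorem~\ref{thm-birkhoff}): from $F_-=F\cdot F_+$ one obtains $QF_-Q^{-1}=\tilde F\cdot(QF_+Q^{-1})$, and because $Q$ is independent of $\lambda$, the first factor remains in $\Lambda^-_*G^{\mathbb{C}}_{\sigma}$ and the second in $\Lambda^+G^{\mathbb{C}}_{\sigma}$, so uniqueness of the splitting forces $\tilde F_-=QF_-Q^{-1}$, and differentiation again yields $\eta_{Q}=Q\eta Q^{-1}$. I do not anticipate a real obstacle here: the lemma is essentially the naturality of the DPW recipe under the constant left $K$-action on $G/K$. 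The only subtlety worth recording is that conjugation by $Q\in K\subset G^{\sigma}$ respects the $\sigma$-twisting, so it preserves each of the loop subgroups appearing in the Iwasawa and Birkhoff decompositions used above.
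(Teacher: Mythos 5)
Your argument is correct, and both of your routes work. Your primary route tracks the $Q$-conjugation through each ingredient of Wu's formula: since $Q\in K$ is constant, $\mathrm{Ad}(Q)$ preserves the Cartan decomposition $\mathfrak{k}\oplus\mathfrak{p}$ and commutes with extracting holomorphic Taylor parts, so $\tilde\delta_j=Q\delta_j Q^{-1}$, and ODE uniqueness gives $\tilde F_0=QF_0Q^{-1}$; substituting into \eqref{eq-Wu} yields the claim. What you present as a ``conceptual cross-check,'' however, is exactly the paper's entire proof: it starts from $F_-=FF_+$, conjugates to get $QF_-Q^{-1}=(QFQ^{-1})(QF_+Q^{-1})$, notes that $Q$-conjugation preserves $\Lambda^-_*G^{\mathbb{C}}_\sigma$ and $\Lambda^+G^{\mathbb{C}}_\sigma$, and reads off $F_{-Q}=QF_-Q^{-1}$ by uniqueness of the Birkhoff splitting, from which $\eta_Q=Q\eta Q^{-1}$ follows in one line. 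The Birkhoff route is shorter because it never needs to unpack Wu's formula; your Wu-formula route makes explicit which intermediate objects transform and how, at the cost of a few extra steps, but both hinge on the same two facts (constancy of $Q$ and $Q\in K\subset G^\sigma$ respecting the twisted loop subgroups).
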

\begin{proof}
We have now the lift $QFQ^{-1}$  of $Q\mathcal{F}$ with respect to the base point $0$. So we have the Birkhoff splitting of  $QFQ^{-1}$ as below
\[F_{-Q}=QF_-Q^{-1}=QFQ^{-1}QF_+Q^{-1} \hbox{ since } F_-=FF_+.\]
Hence we obtain
\[\eta_{Q}=(QF_-Q^{-1})^{-1}\dd QF_-Q^{-1}=Q\eta Q^{-1}.\]
\end{proof}
This lemma shows that we can identify the normalized potentials up to an conjugation of elements in $K$.

\subsection{Potentials of isotropic harmonic maps}

\subsubsection{The general case}
  Let  $\mathbb{D}$ denote the unit disk of $\mathbb{C}$ or $\mathbb{C}$ itself.
Let $ \mathcal{F}: \mathbb D \rightarrow SO^+(1,n+3)/(SO^+(1,1)\times SO(n+2))$
 be a  harmonic map with a   lift  $F:\mathbb D\rightarrow SO^+(1,n+3)$ and the Maurer-Cartan form $\alpha=F^{-1}\dd F$. Then
\[
\alpha_0^\prime=\left(
                   \begin{array}{cc}
                     A_1 &0 \\
                     0 & A_2 \\
                   \end{array}
                 \right) \dd z,\ \ \alpha_1^\prime=\left(
                   \begin{array}{cc}
                     0 & B_1 \\
                     -B_1^tI_{1,1} & 0 \\
                   \end{array}\right) \dd z.
\]

  \begin{theorem}(\cite{Helein,Helein2,Xia-Shen,BW}) The normalized potential of an isotropic harmonic map $ \mathcal{F}=Y\wedge\hat{Y}$ is of the form

 \begin{equation}\label{eq-b1 of a}
\eta=\lambda^{-1}\left(
                   \begin{array}{cc}
                    0 & \hat{B}_1 \\
                     -\hat{B}_1^tI_{1,1} & 0\\
                   \end{array}
                 \right)\dd z,
\hbox{   with }\ \hat{B}_1\hat{B}_1^t=0.
\end{equation}
Moreover, $[Y]$ and $[\hat{Y}]$ forms a pair of dual (S-)Willmore surfaces if and only if $rank(
\hat{B}_1)=1$.

Conversely, let $ \mathcal{F}=Y\wedge \hat{Y}$ be an harmonic map with  normalized potential
\[
\eta=\lambda^{-1}\left(
                   \begin{array}{cc}
                    0 & \hat{B}_1 \\
                     -\hat{B}_1^tI_{1,1} & 0\\
                   \end{array}
                 \right)\dd z
\]
                 satisfying \eqref{eq-b1 of a}. Then $ \mathcal{F}$ is an isotropic harmonic map.
\end{theorem}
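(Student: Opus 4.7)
The plan is to invoke the Maurer-Cartan characterization of isotropic harmonic maps from the preceding theorem, and then extract the normalized potential via Wu's formula (Theorem \ref{thm-wu}). The algebraic engine driving both directions will be the observation that the isotropy relation $B_1 B_1^t = 0$ is preserved under conjugation by elements of $K^{\mathbb{C}} = S(O^+(1,1,\mathbb{C}) \times O(n+2,\mathbb{C}))$, using $L_1^t I_{1,1} L_1 = I_{1,1}$ for $L_1 \in SO^+(1,1,\mathbb{C})$ and $L_2^t L_2 = I$ for $L_2 \in SO(n+2,\mathbb{C})$.

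For the forward direction, I would start with the frame $F$ provided by the preceding theorem, so that $\alpha_0' = \mathrm{diag}(A_1, A_2)\,\dd z$ and $\alpha_1' = \bigl(\begin{smallmatrix} 0 & B_1 \\ -B_1^t I_{1,1} & 0 \end{smallmatrix}\bigr)\dd z$ with $B_1 B_1^t = 0$. Wu's formula then gives $\eta = \lambda^{-1} F_0 \delta_1 F_0^{-1}\,\dd z$, where $F_0$ integrates $\delta_0\,\dd z$ with $F_0(0) = I$. Since $\delta_0$ is block-diagonal and $\mathfrak{k}^{\mathbb{C}}$-valued, $F_0 = \mathrm{diag}(L_1, L_2)$ with $L_1 \in SO^+(1,1,\mathbb{C})$ and $L_2 \in SO(n+2,\mathbb{C})$. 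Conjugating $\delta_1$ by $F_0$ preserves the off-block form; using $L_1^t I_{1,1} = I_{1,1} L_1^{-1}$, the upper-right block becomes $\hat B_1 = L_1 \delta_{B_1} L_2^{-1}$, where $\delta_{B_1}$ is the holomorphic part of the Taylor expansion of $B_1$ at $z=0$, and the lower-left block reads $-\hat B_1^t I_{1,1}$. Finally, $L_2^{-1}(L_2^{-1})^t = L_2^t L_2 = I$ gives
\[\hat B_1 \hat B_1^t = L_1 \delta_{B_1} L_2^{-1} (L_2^{-1})^t \delta_{B_1}^t L_1^t = L_1 \delta_{B_1} \delta_{B_1}^t L_1^t.\]
The identity $B_1 B_1^t = 0$ on $U$ is real-analytic, hence extends holomorphically in $(z, \bar z)$ viewed as independent complex variables; restricting to $\bar z = 0$ yields $\delta_{B_1} \delta_{B_1}^t = 0$, and therefore $\hat B_1 \hat B_1^t = 0$.

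For the rank statement, the explicit form of $B_1$ recorded in the preceding theorem yields $\mathrm{rank}(B_1) = 1$ iff $\gamma_j = 0$ for all $j$, iff $\zeta = D_{\bar z}\kappa + \tfrac{\bar \mu}{2}\kappa = 0$; this is precisely the condition (recorded just after the definition of the adjoint transform) that $\hat Y$ be the dual surface of $Y$, i.e. the S-Willmore case. Since $L_1(0) = L_2(0) = I$, the ranks of $\hat B_1$ and $B_1$ agree at the base point, and generic equality follows by real-analyticity.

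For the converse, I would reverse this computation: Theorem \ref{thm-DPW} turns the given $\eta$ into an extended frame $\tilde F$ of a harmonic map $\mathcal{F} = Y \wedge \hat Y$, and writing $\tilde F_+|_{\lambda=0} = \mathrm{diag}(\tilde L_1, \tilde L_2) \in K^{\mathbb{C}}$, the $(1,0)$-part of $\tilde F^{-1}\dd \tilde F$ retains the block-antidiagonal shape with upper-right block $B_1 = \tilde L_1^{-1} \hat B_1 \tilde L_2$; the same orthogonality calculation gives $B_1 B_1^t = 0$, and the converse part of the previous theorem identifies $\mathcal{F}$ as an isotropic harmonic map. The main technical obstacle will be the transition from the pointwise identity $B_1 B_1^t = 0$ on $U$ to $\delta_{B_1} \delta_{B_1}^t = 0$ for the purely holomorphic Taylor part at the base point; once that is secured, the rest follows directly from Wu's formula and the Iwasawa step of DPW.
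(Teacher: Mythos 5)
Your proposal is essentially correct, and the route you take (frame from the Maurer--Cartan characterization, Wu's formula, $K^{\mathbb{C}}$-invariance of $B_1B_1^t=0$, holomorphic continuation to pass from $B_1B_1^t=0$ to $\delta_{B_1}\delta_{B_1}^t=0$) is exactly the one implicit in the cited sources; the paper itself only cites this result and gives no proof, so there is nothing in-text to compare against.

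Two small points worth tightening. First, in the converse: the Iwasawa splitting is $F_-=\tilde F\,\tilde F_+$, so $\tilde F=F_-\tilde F_+^{-1}$ and the $\lambda^{-1}$-coefficient of $\tilde F^{-1}\dd\tilde F$ is $\tilde F_{+0}\,\eta_{-1}\,\tilde F_{+0}^{-1}$, i.e.\ $B_1=\tilde L_1\hat B_1\tilde L_2^{-1}$, not $\tilde L_1^{-1}\hat B_1\tilde L_2$; the conclusion $B_1B_1^t=0$ is unaffected. You should also record explicitly that $B_1B_1^t=0$ forces $\mathrm{tr}\bigl(\alpha_1'(\partial_z)^2\bigr)=-2\,\mathrm{tr}(B_1B_1^tI_{1,1})=0$, so the DPW-constructed harmonic map is automatically conformal, which is needed to invoke the converse of the preceding (Ma) theorem. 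Second, the rank argument as phrased (``ranks agree at the base point, and generic equality follows by real-analyticity'') is a bit too quick, since rank at a single point does not control generic rank. The cleaner route is to use the Birkhoff factorization $F=F_-F_+^{-1}$: the same calculation gives $B_1(z,\bar z)=L_1(z,\bar z)\,\hat B_1(z)\,L_2(z,\bar z)^{-1}$ with $L_1,L_2$ invertible blocks of $F_+|_{\lambda=0}$, so $\mathrm{rank}\,B_1(z,\bar z)=\mathrm{rank}\,\hat B_1(z)$ \emph{pointwise}, after which the equivalence $\mathrm{rank}\,B_1=1\Leftrightarrow\gamma_j\equiv 0\Leftrightarrow\zeta\equiv 0\Leftrightarrow$ S-Willmore goes through as you describe.
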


\subsubsection{On minimal surfaces in space forms}
In \cite{Helein}, there is an interesting description of Willmore surfaces
M\"obius equivalent to minimal surfaces in space forms. Here we restate it as:
\begin{theorem}\label{thm-minimal}(\cite{Helein}, \cite{Xia-Shen}) Let $\mathcal{F}_h=Y\wedge \hat{Y}$ be a non-constant isotropic harmonic map.
\begin{enumerate}
\item The map $[Y]$
 is M\"obius equivalent to  a minimal surface in $\mathbb{R}^{n+2}$ if $\hat{Y}$ reduces to a point. In this case
$
B_1= \left(
          \begin{array}{cc}
            b_1 &
            b_1  \\
          \end{array}
        \right)^t.
$
\item
The map $[Y]$ is M\"obius equivalent to  a minimal surface in $ S^{n+2}$ if $\mathcal{F}_h$ reduces to a harmonic map into $SO(n+3)/SO(n+2)$. In this case
$
B_1= \left(
          \begin{array}{cc}
            0 &
            b_1 \\
          \end{array}
        \right)^t.
$
\item
The map $[Y]$ is M\"obius equivalent to  a minimal surface in $H^{n+2}$  if $\mathcal{F}_h$ reduces to a harmonic map into $SO^+(1,n+2)/SO^+(1,n+1)$. In this case
$
B_1= \left(
          \begin{array}{cc}
            b_1 &
            0\\
          \end{array}
        \right)^t .
$
\end{enumerate}
Here $b_1$ takes values in $\mathbb{C}^{n+2}$ and satisfies $b_1^tb_1=0$.

The converse of the above results also hold. That is, if $B_1$ is (up to conjugation) of the form stated above, then $[Y]$ is M\"obius equivalent to  the corresponding minimal surface where it is an immersion.
\end{theorem}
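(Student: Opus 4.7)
The plan is to translate each of the three algebraic forms of $B_1$ into a geometric statement about a constant direction in the Lorentzian $2$-plane $Y\wedge\hat Y$, and then invoke the classical M\"obius-geometric models of minimal surfaces in the three space forms. First I would rewrite the rows of $B_1$ using the explicit parametrization given just before \eqref{eq-B1-Ma-h}: the three cases correspond respectively to $\rho=0$ (Case 1), $\rho=-1$ (Case 2), $\rho=1$ (Case 3), together with $\gamma_j=0$ for all $j$ in every case. Since $\gamma_j=0$ means $\zeta=D_{\bar z}\kappa+\frac{\bar\mu}{2}\kappa=0$, and the Riccati equation \eqref{eq-h-1II} forces $\theta=0$, the structure equation \eqref{eq-hat-Y-z} collapses in each case to a relation expressing $\hat Y_z$ as a combination of $\hat Y$, $Y_z$, and $Y$ only, with no normal-bundle component.

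In Case 1 one obtains $\hat Y_z=\frac{\mu}{2}\hat Y$ (and $\hat Y_{\bar z}=\frac{\bar\mu}{2}\hat Y$ by conjugation), so $\dd\log\hat Y$ is a scalar $1$-form and on a simply connected chart $[\hat Y]$ is constant, equal to a null direction $[P]$. Normalizing so that $\langle Y,P\rangle=-1$ places $Y$ in the affine null slice of the forward light cone, which is the standard conformal model of $\mathbb{R}^{n+2}$; the Willmore condition there becomes minimality. In Case 2, after using the $SO^+(1,1)\subset K$-conjugation freedom to bring the frame into the form with $\mu=0$ (equivalently, with $v_1=(Y+\hat Y)/\sqrt 2$ stationary), the reduced equation yields $\dd(Y+\hat Y)=0$, so $Y+\hat Y$ is a constant timelike vector $e_-$. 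The slice $\{y\in\mathcal{C}_+^{n+3}:\langle y,e_-\rangle=-1\}$ is $S^{n+2}$, and $[Y]$ is a minimal immersion there. Case 3 is symmetric, yielding a constant spacelike vector $e_+$ and the hyperbolic model $H^{n+2}$.

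For the converse, start with $[Y]$ M\"obius-equivalent to a minimal surface in the relevant space form, lift it to the corresponding affine slice of the light cone, and identify the fixing vector ($P$, $e_-$, or $e_+$) with $\hat Y$, $Y+\hat Y$, or $\hat Y-Y$ respectively; then reverse the derivation above to recover the asserted form of $B_1$, with $b_1^t b_1=0$ verified directly from the parametrization. The main technical point is the careful handling of the $SO^+(1,1)$-conjugation freedom in Cases 2 and 3: the conditions $\rho=\pm 1$ are not individually boost-invariant, but the causal type (null, timelike, spacelike) of the left annihilator $v\in\mathbb{C}^2$ of $B_1$ is. One must check that the three $SO^+(1,1)$-orbits on such annihilators correspond bijectively to the three space-form geometries, and that within each case a boost can be chosen so that $\mu=0$, at which point the adapted computation above becomes effective and the minimal-surface interpretation manifest.
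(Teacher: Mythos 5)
The essential difficulty is in Cases 2 and 3, where you claim that an $SO^+(1,1)$-conjugation can be used to set $\mu=0$ so that $\dd(Y+\hat Y)=0$; this step does not go through. A constant boost $Q_1\in SO^+(1,1)$ rescales $Y\mapsto e^{-t}Y,\ \hat Y\mapsto e^{t}\hat Y$ and hence leaves $\mu=2\langle\hat Y,Y_z\rangle$ invariant (it also destroys the canonical-lift normalization $|\dd Y|^2=|\dd z|^2$, so the $\rho,\gamma_j,\mu$--parametrization of $B_1$ from Theorem 2.8 no longer applies); a $z$-dependent gauge in $K$ changes the frame away from the one that Theorem 2.8 parametrizes. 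In fact, for a minimal surface in $S^{n+2}$ the section $Y+\hat Y$ is genuinely non-constant: writing $e_0=(1,y)$, $\hat e_0=(1,-y)$, $Y=e^{-\omega}e_0$ and $\hat Y=\tfrac{e^{\omega}}{2}\hat e_0$ (the normalizations forced by $|\dd Y|^2=|\dd z|^2$ and $\langle Y,\hat Y\rangle=-1$), one has $Y+\hat Y=\bigl(e^{-\omega}+\tfrac{e^{\omega}}{2}\bigr)(1,0)+\bigl(e^{-\omega}-\tfrac{e^{\omega}}{2}\bigr)(0,y)$, which is constant only when $\omega$ is. So with $\rho=-1$, $\gamma_j=0$ you correctly get $(Y+\hat Y)_z=\tfrac{\mu}{2}(\hat Y-Y)$, but the right-hand side does not vanish, and no $K$-conjugation makes it vanish. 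The constant timelike vector that witnesses the reduction to $SO(n+3)/SO(n+2)$ lives in $\mathrm{span}(Y,\hat Y)$ but is \emph{not} $Y+\hat Y$.

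A second, related issue: the matrix $B_1$ in the theorem is (up to a constant $K$-conjugation, cf.\ Lemma \ref{lemma-conj}) the block $\hat B_1$ in the \emph{normalized potential} of Theorem 3.8, not the Maurer--Cartan block of the specific frame $F$ of Theorem 2.8. The two are related by Wu's formula $\hat B_1=F_{01}\check B_1 F_{02}^{-1}$, where $\check B_1$ is only the holomorphic part of $B_1$ and $F_{01}(z)\in O(1,1,\mathbb{C})$ is $z$-dependent. Reading off $\rho=0,\mp 1$ and $\gamma_j=0$ from the full Maurer--Cartan form is therefore not the same assertion as the one in the theorem; the identity $(1,-1)$-annihilator is preserved under this change (since $(1,-1)^t$ is an eigenvector of the symmetric $F_{01}$), so your Case 1 argument survives, but $(1,0)$ and $(0,1)$ are not preserved, which is exactly where Cases 2 and 3 go wrong. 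The route that actually works is to observe directly that the three forms of $\hat B_1$ force $\eta$ to annihilate a fixed null, timelike, or spacelike vector of $\mathbb{R}^{n+4}_1$, hence to take values in the corresponding stabilizer subalgebra ($\mathfrak{so}(n+3,\mathbb{C})$ or $\mathfrak{so}(1,n+2,\mathbb{C})$ in the reductive cases), and then to invoke uniqueness of the Iwasawa splitting inside that subgroup to conclude that the extended frame $\tilde F$ itself can be chosen in $\Lambda SO(n+3)_\sigma$ (resp.\ $\Lambda SO^+(1,n+2)_\sigma$); your argument instead tries to make the particular frame $F$ of Theorem 2.8 fix a vector, which it does not. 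One last minor point: the orbit count in your final paragraph is three, but $SO^+(1,1)$ has four orbits of lines in $\mathbb{C}^2$ (the two null lines plus the timelike and spacelike types); the missing one, with $(1,1)$ in the left annihilator, corresponds to $[Y]$ degenerating to a point rather than $[\hat Y]$, and should be recorded so the case analysis is exhaustive.
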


\subsection{On harmonic maps of finite uniton type}

In this subsection we will discuss harmonic maps of finite uniton type.

Loops which have a finite Fourier expansion will be called {\it algebraic loops} and
the corresponding spaces will be denoted by the subscript $``alg"$, like $\Lambda_{alg} G_{\sigma},\ \Lambda_{alg} G^{\mathbb{C}}_{\sigma},\
\Omega_{alg} G_{\sigma}.$
 We define
$$ \Omega^k_{alg} G_{\sigma}:=\{\gamma\in
\Omega_{alg} G_{\sigma}|
Ad(\gamma)=\sum_{|j|\leq k}\lambda^jT_j \}\ .$$
Let $G/K$ be an inner symmetric space (given by the inner involution $\sigma:G\rightarrow G$). We map $G/K$ into $G$ as totally geodesic submanifold via the (finite covering) Cartan map:
$ \mathfrak{C} :G/K  \rightarrow G,
  \mathfrak{C} (gK) =g\sigma(g)^{-1}.$

\begin{definition} {\em(\cite{Uh,BuGu, DoWa1, DoWa2})}
\begin{enumerate}
\item Let $f:M \rightarrow G$  be a harmonic map into a real Lie group $G$ with
extended solution $\Phi(z,\lambda)\in \Lambda G^{\mathbb{C}}_{\sigma}$.
We say that $f$ has {\it finite uniton number k} if
$$\Phi(M)\subset \Omega^k_{alg} G_{\sigma},\
\hbox{ and } \Phi(M)\nsubseteq \Omega^{k-1}_{alg} G_{\sigma}.$$

\item
 A harmonic map $f$ into $G/K$  is said to be of finite uniton number $k,$ if it is of finite uniton number $k,$ when considered as a harmonic map into $G$ via the Cartan map, i.e., $f$ has finite uniton number $k$ if and only if $\mathfrak{C} \circ f$ has finite uniton number $k$.
 \end{enumerate}
\end{definition}

It is proved that for harmonic maps into inner symmetric space $G/K$, it is of finite uniton number if and only if its normalized potential takes value in some nilpotent Lie sub-algebra \cite{BuGu,Gu2002,DoWa2}. In Section 4 and Section 5 we will give a characterization of totally isotropic Willmore two-spheres in terms of harmonic maps of finite uniton number at most $2$.

\section{Totally isotropic Willmore two-spheres and their adjoint transforms}

In this section we will first collect the geometric results concerning totally isotropic Willmore two-spheres and their adjoint transforms. Then by the geometric descriptions, we are able to derive the normalized potentials of the isotropic harmonic map given by such Willmore surfaces and their adjoint transforms.
\subsection{Totally isotropic Willmore surfaces}
Let $y:M\rightarrow S^{2m}$  be a conformal immersion and we retain the notion in Section 2.
Then \cite{Calabi,Ejiri1988}   $y$ is called {\em totally isotropic} if and only if all the derivatives of $y$ with respect to $z$ are isotropic, or equivalently,
\[ \langle Y_z^{(j)}, Y_{z}^{(l)} \rangle =0 ~~\hbox{ for all } j,\ l \in \mathbb{Z}^+.\] Here ``$Y_{z}^{(j)}$" denotes taking $j$ times derivatives of $Y$ by $z$. As a consequence a totally isotropic Willmore surface always locates in an even dimensional sphere.
  Moreover, we can find locally an isotropic frame $\{E_j\}$, ${j=1,\cdots,m}$, such that
\begin{equation}\label{eq-HTI-0}\begin{split}
  &\langle E_j, \bar{E}_l\rangle=2\delta_{jl},\ j,l=1,\cdots,m,\\
 & Y_z \in Span_{\C}\{E_1 \} \mod Y,\\
  & Y_z^{(j)}\in Span_{\C}\{E_j\}\mod Span_{\C}\{Y, E_1,\cdots,E_{j-1}\},\ j=2,\cdots,m,\\
&\hbox{$\{E_j,\bar {E}_j\}_{j=2,\cdots,m}$ forms a basis of the normal bundle $V^{\perp}$.}
\end{split}\end{equation}
Next we call $y$ an {\em H-totally isotropic surface} if it satisfies furthermore the following conditions
\begin{equation}\label{eq-HTI}
  D_{\bar z} E_j\in Span_{\C}\{E_2,\cdots,E_m\}, \hbox{ for all } j=2,\cdots,m.
\end{equation}
It is direct to verify that this condition is independent of the choice of $z$, $Y$ and $E_j$. Here  the notion ``H" comes from two facts. First,  this condition is similar to the horizontal conditions for minimal two-spheres in $S^{2m}$ \cite{Bryant1982,Calabi}. Second, by a result of \cite{Ma}, we can prove that
\begin{theorem}\cite{MaWang}
Let $y$ be a totaly isotropic Willmore two-sphere  in $S^{2m}$. Then $y$ is an H-totally isotropic surface.
\end{theorem}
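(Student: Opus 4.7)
The plan is to prove the H-condition \eqref{eq-HTI} by converting it into the vanishing of a family of holomorphic differentials on $M = S^2$, in the spirit of Calabi's classical argument for totally isotropic minimal two-spheres. First I would reformulate \eqref{eq-HTI} as scalar vanishing: writing $D_{\bar{z}} E_j$ in the basis $\{E_k, \bar E_k\}_{k=2}^m$ of $V^\perp_{\mathbb C}$ and pairing against $E_l$, using $\langle E_k, E_l \rangle = 0$ and $\langle E_k, \bar E_l\rangle = 2\delta_{kl}$, the condition \eqref{eq-HTI} is equivalent to the scalar identities
\[
\omega_{jk} := \langle D_{\bar{z}} E_j,\, E_k\rangle \equiv 0 \qquad (2 \leq j, k \leq m).
\]
Using the structure equations \eqref{eq-stru} iteratively, each $E_j$ is identified, modulo $V$-components and lower-order $E_\bullet$'s, with a normalization of $D_z^{\,j-2}\kappa$. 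The totally isotropic hypothesis $\langle Y_z^{(p)}, Y_z^{(q)}\rangle \equiv 0$ then propagates to $\langle D_z^a \kappa, D_z^b \kappa\rangle \equiv 0$ for all $a, b \geq 0$.

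Second, for each pair $(j, k)$ I would rescale $\omega_{jk}$ by an appropriate power $\mathrm{d}z^{N_{jk}}$ so that the resulting tensor $\tilde\omega_{jk}$ becomes a well-defined section of $K^{N_{jk}}$ on $M$, independent of the local complex coordinate $z$ and of the residual $U(1)$-gauge freedom in the choice of each $E_j$. I would then show $\bar\partial \tilde\omega_{jk} = 0$ using three ingredients: (i) the Willmore equation \eqref{eq-Willmore}, which eliminates $D_{\bar{z}}^2 \kappa$ in favor of $\kappa$; (ii) the Ricci equation from \eqref{eq-integ}, needed to commute $D_{\bar{z}}$ past the iterated $D_z$'s hidden inside $E_j$; and (iii) the propagated isotropy identities from the previous paragraph. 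The argument would proceed by induction on $j$, at each stage reducing the $\bar{z}$-derivative of a $D_z^{\,j-2}\kappa$-valued expression to objects whose holomorphicity has already been established.

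Finally, since $S^2$ admits no nonzero holomorphic section of $K^{N_{jk}}$ for $N_{jk} > 0$, the tensor $\tilde\omega_{jk}$ must vanish identically, whence $\omega_{jk} \equiv 0$ and \eqref{eq-HTI} holds. The hard part is the holomorphicity step: commuting $D_{\bar{z}}$ through $D_z^{\,j-2}$ produces, via the Ricci equation, curvature terms quadratic in $\kappa$ that must be absorbed by a careful combination of the Willmore equation and the totally isotropic identities, and the recursion closes precisely because these two hypotheses reinforce one another. A subsidiary obstacle is verifying that each weight $N_{jk}$ is strictly positive so the $S^2$-vanishing theorem applies; this reduces to bookkeeping once the correct invariant differential has been identified.
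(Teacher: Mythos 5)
The paper does not prove this statement itself; it cites the in-preparation work \cite{MaWang}, points to \cite{Wang-iso} for the case $m=3$, and only records the key idea, namely ``an application of the holomorphic forms given by $\kappa$ and its derivatives'' in the spirit of Section~5 of \cite{Ma}. Your outline matches that sketch at the level of strategy: reformulate the H-condition \eqref{eq-HTI} as scalar vanishing, interpret the relevant scalars as holomorphic differentials, and use the nonexistence of nonzero holomorphic sections of $K^{N}$ on $S^2$.

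There is, however, a concrete obstruction to your second step as stated. The quantities $\omega_{jk}=\langle D_{\bar z}E_j, E_k\rangle$ cannot be turned into holomorphic sections of $K^{N_{jk}}$ by multiplying by $\dd z^{N_{jk}}$, because they carry a nontrivial anti-holomorphic weight. Under a holomorphic coordinate change $z\to w$ the Hermitian normalization $\langle E_j,\bar E_j\rangle=2$ forces $E_j\mapsto (w_z/|w_z|)^{j}E_j$, while $D_{\bar z}\mapsto \overline{w_z}\,D_{\bar w}$; using $\langle E_j,E_k\rangle=0$ to kill the derivative-of-phase term one gets
\[
\omega_{jk}\ \longmapsto\ \overline{w_z}\left(\frac{w_z}{|w_z|}\right)^{j+k}\omega_{jk},
\]
so $\omega_{jk}$ is a tensor of mixed type $\bigl(\tfrac{j+k}{2},\,1-\tfrac{j+k}{2}\bigr)$. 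Since $j,k\ge 2$, the anti-holomorphic exponent $1-\tfrac{j+k}{2}\le -1$ is nonzero (and is fractional whenever $j+k$ is odd), so no power of $\dd z$ produces an $(N,0)$-tensor, let alone a holomorphic one. This is exactly the kind of weight bookkeeping you flagged as a ``subsidiary obstacle,'' but it is not a positivity check --- it is an incompatibility of type.

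The correct holomorphic objects are the unnormalized pairings implicit in your first paragraph. In the conventions of the paper the canonical lift scales as $Y\mapsto |w_z|^{-1}Y$, hence $\kappa\mapsto |w_z|^{-1}w_z^{2}\kappa$, i.e.\ $\kappa$ has weight $(\tfrac32,-\tfrac12)$ (this is consistent with $\langle\kappa,\bar\kappa\rangle|\dd z|^2$ and with $\Omega\,\dd z^6$ in Section 2 being invariants). One then checks $c_{ab}:=\langle D_z^{a}D_{\bar z}\kappa, D_z^{b}\kappa\rangle$ has pure weight $(a+b+3,0)$, so $c_{ab}\,\dd z^{a+b+3}$ is a genuine candidate holomorphic differential of positive degree. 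It is for these $c_{ab}$ that the Willmore equation \eqref{eq-Willmore}, the Ricci equation in \eqref{eq-integ}, and the propagated isotropy identities $\langle D_z^{p}\kappa, D_z^{q}\kappa\rangle\equiv 0$ close into a proof of $\partial_{\bar z}c_{ab}=0$ (for instance, $\partial_{\bar z}c_{01}=0$ follows directly from Willmore, Ricci and $\langle D_{\bar z}\kappa, D_{\bar z}\kappa\rangle=0$, the last being a consequence of Willmore plus $\langle\kappa,\kappa\rangle=0$). Once all $c_{ab}$ vanish, the $\omega_{jk}$ vanish by induction on $j$, since $E_j$ is a nonzero multiple of the projection of $D_z^{j-2}\kappa$ away from $V\oplus\mathrm{Span}\{E_2,\dots,E_{j-1}\}$, and the normalization factors drop out once the lower-order pairings are already zero. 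With this replacement of $\omega_{jk}$ by $c_{ab}$, the remainder of your outline is sound and agrees with the approach the paper describes.
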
See \cite{Wang-iso} for a proof in the case $m=3$. The key point of the proof is an application of the holomorphic forms given by $\kappa$ and its derivatives, which can be found in Section 5 of \cite{Ma}.

Moreover, totally isotropic surfaces in $S^{2m}$ may not be Willmore. But H-totally isotropic surfaces must be Willmore.
\begin{proposition}
Let $y$ be an H-totally isotropic surface in $S^{2m}$. Then $y$ is Willmore.
\end{proposition}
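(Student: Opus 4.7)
The strategy is to exploit the H-totally isotropic structure to trap both $\kappa$ and $D_{\bar z}D_{\bar z}\kappa$ inside the isotropic subspace $\mathrm{Span}_{\C}\{E_2,\dots,E_m\}\subset V_{\C}^\perp$; the Willmore equation will then drop out of the Codazzi equation since this subspace and its complex conjugate are transverse.

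First I would verify that $\kappa=\kappa_2 E_2$ for some function $\kappa_2$. Since $Y_z\in\mathrm{Span}_{\C}\{E_1\}\mod Y$ and $y$ is a conformal immersion, the coefficient on $E_1$ is nonzero, so $E_1\in V_{\C}$. Combining the H-condition at $j=2$, namely $Y_{zz}\in\mathrm{Span}_{\C}\{E_2\}\mod\mathrm{Span}_{\C}\{Y,E_1\}$, with the structure equation $Y_{zz}=-\frac{s}{2}Y+\kappa$, and projecting onto the normal bundle $V_{\C}^\perp$ (noting that $Y,E_1\in V_{\C}$ while $E_2\in V_{\C}^\perp$), one reads off $\kappa=\kappa_2 E_2\in \mathrm{Span}_{\C}\{E_2,\dots,E_m\}$.

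Next I would invoke the H-condition \eqref{eq-HTI}, $D_{\bar z}E_j\in\mathrm{Span}_{\C}\{E_2,\dots,E_m\}$, together with the Leibniz rule for the normal connection, to obtain inductively
\[
D_{\bar z}\kappa=(\kappa_2)_{\bar z}E_2+\kappa_2 D_{\bar z}E_2\in\mathrm{Span}_{\C}\{E_2,\dots,E_m\},
\]
and, applying $D_{\bar z}$ once more,
\[
D_{\bar z}D_{\bar z}\kappa+\tfrac{\bar s}{2}\kappa\in\mathrm{Span}_{\C}\{E_2,\dots,E_m\}.
\]
Since $D$ is a real connection, complex conjugation commutes with it, and the H-condition conjugates to $D_z\bar E_j\in\mathrm{Span}_{\C}\{\bar E_2,\dots,\bar E_m\}$; hence the complex conjugate
$D_zD_z\bar\kappa+\tfrac{s}{2}\bar\kappa$ lies in $\mathrm{Span}_{\C}\{\bar E_2,\dots,\bar E_m\}$.

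Finally, the Codazzi equation in \eqref{eq-integ} states precisely that $D_{\bar z}D_{\bar z}\kappa+\tfrac{\bar s}{2}\kappa$ coincides with its complex conjugate. This common value therefore lies in
\[
\mathrm{Span}_{\C}\{E_2,\dots,E_m\}\cap\mathrm{Span}_{\C}\{\bar E_2,\dots,\bar E_m\}.
\]
Because $\{E_j,\bar E_j\}_{j=2,\dots,m}$ is a basis of $V_{\C}^\perp$ (the two isotropic spans being transverse), this intersection is trivial, forcing $D_{\bar z}D_{\bar z}\kappa+\tfrac{\bar s}{2}\kappa=0$, i.e.\ the Willmore equation \eqref{eq-Willmore}. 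The only real bookkeeping obstacle is step one, ensuring that the $Y$- and $E_1$-components of $Y_{zz}$ do not contaminate $\kappa$; this reduces to the immersion condition guaranteeing $E_1\in V_{\C}$, so no genuine difficulty arises.
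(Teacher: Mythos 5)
Your argument is correct and is essentially the paper's proof: establish $\kappa\in\mathrm{Span}_{\C}\{E_2\}$, propagate the H-condition through $D_{\bar z}$ to put $D_{\bar z}D_{\bar z}\kappa+\tfrac{\bar s}{2}\kappa$ in $\mathrm{Span}_{\C}\{E_2,\dots,E_m\}$, and then observe that the Codazzi equation $\mathrm{Im}(D_{\bar z}D_{\bar z}\kappa+\tfrac{\bar s}{2}\kappa)=0$ forces this quantity to vanish because $\mathrm{Span}_{\C}\{E_2,\dots,E_m\}$ and its conjugate are transverse in $V_{\C}^\perp$. You merely spell out the transversality argument that the paper leaves implicit in the final step, so this is the same proof in slightly expanded form.
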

\begin{proof}
By definition of $E_j$, we have $\kappa\in Span_{\C}\{E_2\}$.
From \eqref{eq-HTI}, we have that
\[D_{\bar z}\kappa\in Span_{\C}\{E_2,\cdots,E_m\},\ D_{\bar z}D_{\bar z}\kappa\in Span_{\C}\{E_2,\cdots,E_m\}.\]
So $D_{\bar z}D_{\bar z}\kappa+\frac{\bar s}{2}\kappa \in Span_{\C}\{E_2,\cdots,E_m\}$. Hence $Im(D_{\bar z}D_{\bar z}\kappa+\frac{\bar s}{2}\kappa)=0$ in \eqref{eq-integ} indicates that the Willmore equation $D_{\bar z}D_{\bar z}\kappa+\frac{\bar s}{2}\kappa=0$ holds, i.e., $y$ is Willmore.
\end{proof}

Concerning the adjoint surfaces of  H-totally isotropic surfaces, we have
\begin{proposition} \cite{MaWang}
Let $y$ be an H-totally isotropic surface in $S^{2m}$ (hence Willmore). Then the adjoint surface of  $y$ is also H-totally isotropic surface on the points is is immersed.
\end{proposition}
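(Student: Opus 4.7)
The plan is to work directly with the lift $\hat Y$ given by \eqref{eq-hat-Y-def} and the coordinate $z$, since the H-totally isotropic property is invariant under rescaling of the lift and under holomorphic reparametrization. The goal is to construct, on the open set where $\hat Y$ is immersed, an isotropic frame $\{\hat E_j\}_{j=1}^m$ satisfying both \eqref{eq-HTI-0} and \eqref{eq-HTI} with $Y$ replaced by $\hat Y$.

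The starting point is that the Riccati equation \eqref{eq-h-1II} forces $\theta=0$, so \eqref{eq-hat-Y-z} reduces to
\[
\hat Y_z \;=\; \tfrac{\mu}{2}\hat Y + \rho\,\tilde Y_z + 2\zeta,\qquad \tilde Y_z := Y_z+\tfrac{\mu}{2}Y,\qquad \zeta := D_{\bar z}\kappa + \tfrac{\bar\mu}{2}\kappa.
\]
Under the H-totally isotropic hypothesis $\kappa\in\mathrm{Span}_{\C}\{E_2\}$ and $D_{\bar z}\kappa\in\mathrm{Span}_{\C}\{E_2,\dots,E_m\}$, so $\zeta\in\mathrm{Span}_{\C}\{E_2,\dots,E_m\}$. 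In particular $\langle\zeta,\zeta\rangle=0$, so \eqref{eq-h-m} is automatic (we are in case (1) of Theorem 2.2) and $\langle\hat Y_z,\hat Y_z\rangle=4\langle\zeta,\zeta\rangle=0$ follows at once from $\langle Y_z,Y_z\rangle=\langle Y_z,\zeta\rangle=\langle Y,\zeta\rangle=0$. I would take $\hat E_1$ to be a suitable rescaling of $\rho\tilde Y_z+2\zeta$, and define $\hat E_j$, $j\ge 2$, inductively as the leading component of $\hat Y_z^{(j)}$ modulo $\{\hat Y,\hat E_1,\dots,\hat E_{j-1}\}$. Mutual isotropy of the $\hat E_j$ and the normalization $\langle\hat E_j,\bar{\hat E}_l\rangle=2\delta_{jl}$ would follow inductively from the total isotropy of $Y$, the orthogonality of $V$ and $V^{\perp}$, and the identity $(\tilde Y_z)_z=\tfrac{\mu}{2}\tilde Y_z+\kappa$, which is a direct consequence of Riccati and \eqref{eq-stru}.

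The crux of the argument, and the step I expect to be the main obstacle, is the verification of the H-condition $\hat D_{\bar z}\hat E_j\in\mathrm{Span}_{\C}\{\hat E_2,\dots,\hat E_m\}$ for $j\ge 2$. The difficulty is that the mean curvature sphere $\hat V$ of $\hat y$ differs from $V$ whenever $\zeta\neq 0$, so the normal bundle $\hat V^{\perp}$ is genuinely distinct from $V^{\perp}$, and the normal connection $\hat D$ must be extracted from $\partial_{\bar z}$ by projection onto $\hat V^{\perp}$. The plan is to write each $\hat E_j$, $j\ge 2$, from the outset as an explicit combination of $E_j$, a correction in $V$, and a correction in $\mathrm{Span}_{\C}\{E_2,\dots,E_m\}$; then compute $\partial_{\bar z}\hat E_j$ via \eqref{eq-stru} and show that the projection onto $\mathrm{Span}\{\bar{\hat E}_2,\dots,\bar{\hat E}_m\}\subset\hat V^{\perp}$ vanishes. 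This projection identity should collapse, after repeated use of the Willmore equation \eqref{eq-Willmore}, the H-condition $D_{\bar z}E_l\in\mathrm{Span}\{E_2,\dots,E_m\}$ on $Y$, and the Riccati equation, to a tautology. A structural input that should keep the bookkeeping tractable is the involutive character of the adjoint relation---that $Y$ is in turn an adjoint of $\hat Y$ after canonical rescaling---which should transport the $\bar E_l$-annihilation property of $D_{\bar z}E_j$ for $Y$ into the corresponding $\bar{\hat E}_l$-annihilation for $\hat Y$.
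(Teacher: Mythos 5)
Your approach matches the paper's one-line argument: both rest on the observation from \eqref{eq-hat-Y-z} together with the Riccati equation that $\hat Y_z\in\mathrm{Span}_{\C}\{E_1,\dots,E_m\}$ modulo $\{Y,\hat Y\}$, so the osculating data of $\hat Y$ can be assembled from the same isotropic frame. But you explicitly stop short of verifying the H-condition for $\hat Y$, announcing that the projection identity ``should collapse to a tautology.'' That is precisely where the content of the proposition lies, and leaving it as a prediction means the proof is not actually finished.

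The verification does go through, and the decisive input is a scalar identity that your outline does not isolate: the H-condition on $Y$, the structure equations \eqref{eq-stru}, and the Riccati equation together give
\[
\langle\partial_{\bar z}E_k,\,E_n\rangle=0\qquad\text{for all }1\le k,n\le m
\]
(for $k,n\ge2$ because $D_{\bar z}E_k$ stays in the isotropic span $\{E_2,\dots,E_m\}$ while the $Y$ and $Y_z$ correction terms from \eqref{eq-stru} pair to zero against $E_n$; the cases $k=1$ or $n=1$ are direct checks using $\partial_{\bar z}E_1\in\mathrm{Span}_{\C}\{Y,\hat Y,E_1\}$). Each $\hat E_j$ with $j\ge2$ lies both in the isotropic space $\hat{\mathcal E}:=\mathrm{Span}_{\C}\{\hat Y,E_1,\dots,E_m\}$ and in $\hat V^{\perp}_{\C}$, so writing $\hat E_j=a_j\hat Y+\sum_k c_{jk}E_k$, the $\bar{\hat E}_l$-component of $\hat D_{\bar z}\hat E_j$ is proportional to $\langle\partial_{\bar z}\hat E_j,\hat E_l\rangle$. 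When this is expanded, every term vanishes: $\langle E_k,\hat E_l\rangle=0$ by isotropy of $\hat{\mathcal E}$; $\langle\hat Y_{\bar z},\hat E_l\rangle=0$ since $\hat E_l\in\hat V^{\perp}_{\C}$; $\langle\partial_{\bar z}E_k,E_n\rangle=0$ by the scalar identity; and the terms involving $\langle\partial_{\bar z}E_k,\hat Y\rangle=-\langle E_k,\hat Y_{\bar z}\rangle$ assemble to a multiple of $\langle\hat E_j,\hat Y_{\bar z}\rangle=0$, again because $\hat E_j\perp\hat V_{\C}$ and $\hat Y$ is null. This is the computation you deferred; once the scalar identity is in hand it is short. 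The appeal to ``involutivity of the adjoint relation'' is not needed and would itself require an argument for general (non-dual) adjoint transforms.
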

This can be easily derived since $\hat Y_{z}\in Span_{\C}\{E_1,\cdots,E_m\}\mod \{Y, \hat Y\}$ by \eqref{eq-hat-Y-z}.

\subsection{Normalized potentials of H-totally isotropic surfaces}

The normalized potentials of H-totally isotropic surfaces can be derived from Wu's formula as below
\begin{theorem}\label{thm-main-1}
Let $y:\mathbb D\rightarrow S^{2m}$ be an  H-totally isotropic surface   with a local adjoint transform $\hat y=[\hat Y]$. Assume that $\mathcal{F}|_{z=0}=I\mod K$. Then up to an conjugation, the normalized potential of $ \mathcal{F} =Y\wedge \hat Y$ has the form
\begin{equation}\label{eq-H-iso-NP}
\eta=\lambda^{-1}\left(
                   \begin{array}{cc}
                    0 & \hat{B}_1 \\
                     -\hat{B}_1^tI_{1,1} & 0\\
                   \end{array}
                 \right)\dd z, \ ~~\hat {B}_1=\left(
      \begin{array}{ccccccc}
                          h_{11} & i h_{11}  & \cdots &   h_{m1} & i h_{m1} \\
                         \hat{h}_{11} & i\hat{ h}_{11}  & \cdots &   \hat{h}_{m1} & i \hat{h}_{m1} \\
      \end{array}
    \right),
\end{equation}
with $\{ h_{j1}\dd z,\  \hat h_{j1}\dd z|\ j=1,\cdots,m \}$  being meromorphic 1-forms on $\mathbb D$.
 \end{theorem}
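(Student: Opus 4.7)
The plan is to invoke Wu's formula (Theorem 3.6) with a lift of $\mathcal{F} = Y\wedge\hat Y$ adapted to the isotropic structure, and to track how the H-totally isotropic condition propagates through the conjugation. The claim about $\hat B_1$ in \eqref{eq-H-iso-NP}---that the columns come in pairs differing by the factor $i$---is equivalent, after the change of basis $E_1 = e_1 - ie_2$ and $E_j = \psi_{2j-3} - i\psi_{2j-2}$ for $j \geq 2$, to the statement that in the isotropic basis $\{E_j,\bar E_j\}_{j=1}^m$ the block $\hat B_1$ has vanishing $\bar E_j$-columns. So the target reduces to verifying this ``isotropic support'' property for the normalized potential.

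First I would rewrite the Maurer--Cartan form of Theorem 2.6 in the isotropic basis. The H-condition \eqref{eq-HTI}, combined with \eqref{eq-HTI-0}, forces $\kappa \in \mathrm{Span}_\C\{E_2\}$ and $\zeta = D_{\bar z}\kappa + \frac{\bar\mu}{2}\kappa \in \mathrm{Span}_\C\{E_2,\dots,E_m\}$, while the $\rho$- and $\mu$-entries of $\alpha_1'$ live purely in the $E_1$-slot. Consequently, in the isotropic basis the off-diagonal block of $\alpha_1'$ has non-zero entries only in the $E_j$-columns, and the same holds for $\delta_1$, its holomorphic part at $z=0$.

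Second, Wu's formula gives $\eta = \lambda^{-1} F_0 \delta_1 F_0^{-1}\,\dd z$, with $F_0 = \mathrm{diag}(F_0^{(1)}, F_0^{(2)})\in K^\C$ solving $F_0^{-1}\dd F_0 = \delta_0\dd z$. The conjugation preserves the desired column support provided $F_0^{(2)}$ preserves the isotropic decomposition $\mathrm{Span}\{E_j\}\oplus\mathrm{Span}\{\bar E_j\}$, i.e.\ provided the spatial block of $\delta_0$ is block-diagonal with respect to it. This is the point where the two inputs combine: from the totally isotropic condition $\langle Y_z^{(j)}, Y_z^{(l)}\rangle = 0$ the iterated derivatives $Y_z^{(k)}$ generate a flag with $Y_z^{(k)}\in\mathrm{Span}_\C\{Y, E_1,\dots,E_k\}$, and from the H-condition together with \eqref{eq-integ} one obtains the complementary relation $D_z E_j \in \mathrm{Span}_\C\{Y, \hat Y, E_1,\dots,E_m\}$. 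Hence the spatial part of $\delta_0$ is $J$-linear (where $J$ is the almost complex structure having $\{E_j\}$ as its $+i$-eigenspace), and so is its integral $F_0^{(2)}$; conjugating $\delta_1$ by such an $F_0$ preserves the $\bar E_j$-column vanishing. The ``up to conjugation'' freedom in the statement absorbs the choice of initial frame aligned with the isotropic flag, and the resulting $h_{j1}, \hat h_{j1}$ are meromorphic on $\mathbb{D}$ because $\eta$ is.

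The main obstacle is the last structural point: showing $D_z E_j$ has no $\bar E$-component in the presence of only an H-condition on $D_{\bar z}$. This requires extracting a $(1,0)$-counterpart of \eqref{eq-HTI} from the totally isotropic vanishing together with \eqref{eq-stru}--\eqref{eq-integ}, carefully tracking the flag generated by $Y_z^{(k)}$ modulo the already-processed isotropic directions and using Codazzi-type relations to rewrite $D_z E_j$ via appropriate $Y_z^{(j+1)}$-contributions. These are the ``technical and tedious computations'' indicated in the introduction, whose upshot is precisely the upper-triangular structure, without antiholomorphic terms, of the $(1,0)$-connection on the isotropic flag; all remaining work is routine substitution into Wu's formula.
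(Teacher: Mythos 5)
Your approach is essentially the same as the paper's: pass to the isotropic basis $E_j=\psi_j+i\hat\psi_j$, observe that the H-condition plus $\kappa\in\mathrm{Span}\{E_2\}$, $\zeta\in\mathrm{Span}\{E_2,\dots,E_m\}$ forces the $\mathfrak{p}$-part of the Maurer--Cartan form (hence $\delta_1$) to have vanishing $\bar E_j$-columns, show the spatial block of $\delta_0$ commutes with the almost complex structure so that $F_0^{(2)}$ preserves the isotropic splitting, and then apply Wu's formula. One small correction: the ``technical and tedious computations'' in Section~6 concern only the converse direction (Theorem~\ref{thm-main-2}), not the fact that $D_zE_j$ has no $\bar E$-component; in the paper's proof of Theorem~\ref{thm-main-1} this is simply encoded in the ansatz ``Assume that $D_zE_j=\sum a_{jl}E_l$'' (a consequence of the isotropic flag generated by the $Y_z^{(k)}$), so your ``main obstacle'' is the same step the paper treats as structural, rather than something deferred to the appendix.
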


\begin{proof}
We have the following
\[Y_z=-\frac{\mu}{2}Y+\frac{1}{2}E_{1}.\]
We consider the lift $\tilde{F}$ as below
\[\tilde{F}=\left(\frac{1}{\sqrt{2}}(Y+\hat{Y}),\frac{1}{\sqrt{2}}(-Y+\hat{Y}),e_1,\psi_2,\cdots,\psi_m,\hat e_1,\hat\psi_2,\cdots,\hat\psi_m\right).\]
Here we use the frame defined in \eqref{eq-HTI-0} and set \[E_1=e_1+i\hat e_1,\ E_j=\psi_j+i\hat\psi_j,\ j=2,\cdots, m .\] Set
\[\kappa=\sum_jk_j(\psi_j+i\hat\psi_j),\ \zeta=\sum_{j }\gamma_j(\psi_j+i\hat\psi_j).\]
Assume that $D_{z}E_j=\sum a_{jl}E_l$, $D_{z}\bar{E}_j=\sum \hat{a}_{jl}\bar{E}_l$. By \eqref{eq-HTI-0}, we have $ a_{jl}+\hat{a}_{lj}=0$, and
\[D_{z}\psi_j= \frac{1}{2} \left(\sum(a_{jl}-{a}_{lj})\psi_l+\sum i(a_{jl}+{a}_{jl})\hat \psi_l\right),\]
\[D_{z}\hat\psi_j =\frac{1}{2} \left(-\sum i(a_{jl}+{a}_{lj})\psi_l+\sum (a_{jl}-{a}_{lj})\hat \psi_l\right).\]
So
\[A_2= \left(
         \begin{array}{cc}
           A_{21} & iA_{22} \\
           -iA_{22}^t & A_{21} \\
         \end{array}
       \right) \ \hbox{ with }  \ A_{12}+A_{12}^t=0,\ A_{22}=A_{22}^t,\]
and       \[ A_{21}=\left(
                   \begin{array}{ccc}
                     0 & -\frac{k_l}{2}  \\
                     \frac{k_j}{2} &  \frac{a_{jl}-{a}_{lj}}{2} \\
                   \end{array}
                 \right)_{m\times m},
        \ A_{22}=\left(
                   \begin{array}{ccc}
                     -\frac{\mu}{2} & -\frac{k_l}{2}  \\
                     \frac{k_j}{2} & \frac{a_{jl}+{a}_{lj}}{2}  \\
                   \end{array}
                 \right)_{m\times m}.
\]
Hence, without lose of generality, we assume that $\tilde{F}(0,0,\lambda)=I$ and let
\[\delta_0= \left(
         \begin{array}{cc}
           \check{A}_{1} & 0\\
           0 & \check{A}_{2} \\
         \end{array}
       \right),\ \delta_1= \left(
         \begin{array}{cc}
           0 & \check{B}_1\\
           -\check{B}_1^tI_{1,1} & 0 \\
         \end{array}
       \right)\]
       be the holomorphic parts of $\tilde\alpha_0^\prime (\frac{\partial}{\partial z})$ and $\tilde\alpha_1^\prime (\frac{\partial}{\partial z})$ respectively. Then we have
       \begin{equation}\label{eq-U(n)}
          \check{A}_{2}=\left(
         \begin{array}{cc}
           \check{A}_{21} & i\check{A}_{22} \\
           -i\check{A}_{22}^t & \check{A}_{21} \\
         \end{array}
       \right)\ \hbox{ with } \check{A}_{21}^t+\check{A}_{21}=0,\ \check{A}_{22}^t=\check{A}_{22},
       \end{equation}
       and
       \[\check{B}_1=\left(
                       \begin{array}{cc}
                         \mathbf{b}_1^t & i\mathbf{b}_1^t \\
                         \hat{\mathbf{b}}_1^t & i\hat{\mathbf{b}}_1^t \\
                       \end{array}
                     \right).
       \]
Let $F_0(z):\mathbb{D}\rightarrow G^{\mathbb{C}}$ be the solution to $F_0(z)^{-1}\dd F_0(z)=\delta_0 \dd z$, $F_0(0)=I$. We see that
\[F_0=\left(
                       \begin{array}{cc}
                         F_{01} & 0 \\
                         0 & F_{02}\\
                       \end{array}
                     \right)\]
                     with $F_{01}=\exp(z \check{A}_{1})$ and
                     \[F_{02}=\exp(z \check{A}_{2})=\left(
         \begin{array}{cc}
           F_{021} & F_{022}  \\
           -F_{022}  & F_{021}  \\
         \end{array}
       \right),\ F_{02}^{-1}=F_{02}^t=\left(
         \begin{array}{cc}
           F_{021}^t & -F_{022}^t  \\
           F_{022}^t  & F_{021}^t  \\
         \end{array}
       \right),\]
       since $\check{A}_{2}$ satisfies \eqref{eq-U(n)}. So the normalized potential has the form by Wu's formula \eqref{eq-Wu}
       \[ \widetilde{\eta}=\lambda^{-1}\left(
                   \begin{array}{cc}
                    0 &  \widetilde{B}_1 \\
                     -\widetilde{B}_1^tI_{1,1} & 0\\
                   \end{array}
                 \right)\dd z~~ \hbox{ with }~~\widetilde{B}_1 =F_{01}\check{B}_1F_{02}^{-1}.\]
             So
             \[ \widetilde{B}_1 =F_{01}\check{B}_1F_{02}^{-1}=F_{01}\left(
                       \begin{array}{cc}
                         \mathbf{b}_1^tF_{021}^t+ i\mathbf{b}_1^tF_{022}^t& i(\mathbf{b}_1^tF_{021}^t+ i\mathbf{b}_1^tF_{022}^t) \\
                         \hat{\mathbf{b}}_1^tF_{021}^t+ i\hat{\mathbf{b}}_1^tF_{022}^t& i(\hat{\mathbf{b}}_1^tF_{021}^t+ i\hat{\mathbf{b}}_1^tF_{022}^t) \\
                       \end{array}
                     \right)=\left(
                       \begin{array}{cc}
                          b _1^t & i b_1^t \\
                         \hat{ {b}}_1^t & i\hat{ {b}}_1^t \\
                       \end{array}
                     \right).\]
By a conjugation of (see Lemma \ref{lemma-conj})
\[Q=\left(
      \begin{array}{cc}
      I_2 & 0 \\
        0 & Q_2 \\
      \end{array}
    \right)\ \ \hbox{ with }\
Q_2=\left(
      \begin{array}{cccccccc}
        1 & 0 &   &  &   &  &   &  \\
        &  &1 & 0 &   &  &   &  \\
       &  &   &  & \cdots &  &   &  \\
         &  &   &  &   &  &   1& 0 \\
        0 & 1 &   &  &   &  &   &  \\
          &   & 0 & 1 &   &  &   &  \\
       &  &   &  & \cdots &  &   &  \\
         &  &   &  &   &  &   0& 1 \\
      \end{array}
    \right),
\]
we see that the normalized potential $\eta=Q^{-1}\widetilde{\eta} Q$ has the desired form \eqref{eq-H-iso-NP}.

\end{proof}

The converse part of Theorem \ref{thm-main-1} needs a detailed discussion of the Iwasawa decompositions of $F_-$, see the next Section.

\section{Potentials corresponding to  H-totally isotropic surfaces}

In this section we will first give a characterization of H-totally isotropic surfaces in terms of normalized potentials. This also provides a procedure to construct examples. As illustrations we derive two kinds of examples. We will state the main results in this section and leave the computations to the next section.

\subsection{The characterization of H-totally isotropic surfaces}
\begin{theorem}\label{thm-main-2} Let $\eta$ be a normalized potential of the form \eqref{eq-H-iso-NP}. Let $ \mathcal{F} =Y\wedge \hat Y$ be the corresponding isotropic harmonic map. Then  $[Y]$ and $[\hat Y]$ are a pair of $H-$totally isotropic adjoint Willmore surfaces on the points they are immersed. And  $\mathcal{F}$ is a harmonic map of finite uniton number at most $2$.
\end{theorem}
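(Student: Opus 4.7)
The plan splits the statement into an isotropy claim, a uniton-number bound, and the geometric H-totally isotropic verification. The potential \eqref{eq-H-iso-NP} automatically satisfies $\hat B_1 \hat B_1^t = 0$ by the $(h, ih)$ pairing of its columns (each contribution $h\cdot h + (ih)(ih)$ cancels, and similarly for all row/column cross terms). Hence by the converse direction of the characterization of isotropic harmonic maps via their normalized potentials recalled in Section 3.2, the harmonic map $\mathcal{F} = Y \wedge \hat Y$ produced from $\eta$ is automatically isotropic, so $Y$ and $\hat Y$ already form an adjoint Willmore pair on the points where they are immersed; only the H-totally isotropic refinement \eqref{eq-HTI} and the uniton bound still require work.

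For the uniton bound I would exploit the stronger consequence $\hat B_1(z_1)\hat B_1(z_2)^t \equiv 0$ of the same column pairing. Writing $\eta = \lambda^{-1}\eta_{-1}(z)\dd z$, a direct block computation then gives $\eta_{-1}(z_1)\eta_{-1}(z_2)\eta_{-1}(z_3) = 0$ for all $z_1, z_2, z_3$. The Picard iteration for $F_-^{-1}\dd F_- = \eta$ with $F_-(0, \lambda) = I$ therefore terminates after two steps: $F_-(z, \lambda) = I + \lambda^{-1}A_1(z) + \lambda^{-2}A_2(z)$, a polynomial of degree at most $2$ in $\lambda^{-1}$. The Iwasawa decomposition $F_- = \tilde F \tilde F_+$ preserves the algebraic-loop structure, so the extended frame $\tilde F$ is a Laurent polynomial in $\lambda$ of bounded degree, which is exactly the claimed uniton number at most $2$.

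The remaining and hardest step is the H-totally isotropic verification. My plan is to read off the columns of $\tilde F$ (after undoing the conjugation $Q$ from the proof of Theorem \ref{thm-main-1}) as $\tfrac{1}{\sqrt{2}}(Y+\hat Y), \tfrac{1}{\sqrt{2}}(-Y+\hat Y), e_1, \hat e_1, \psi_2, \hat\psi_2, \ldots, \psi_m, \hat\psi_m$, set $E_1 = e_1 + i\hat e_1$ and $E_j = \psi_j + i\hat\psi_j$, and then use the Maurer-Cartan form $\tilde F^{-1}\dd \tilde F$ to check simultaneously the filtration $Y_z^{(j)} \in \mathrm{Span}_{\mathbb C}\{E_j\} \mod \mathrm{Span}_{\mathbb C}\{Y, E_1, \ldots, E_{j-1}\}$ and the normal-connection identity $D_{\bar z} E_j \in \mathrm{Span}_{\mathbb C}\{E_2, \ldots, E_m\}$. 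The paired column structure $(h, ih)$ of $\hat B_1$ propagates through the Iwasawa splitting and is precisely what enforces these two identities. The main obstacle is carrying out that Iwasawa decomposition explicitly enough to isolate $D_{\bar z} E_j$: because $F_-$ is polynomial of low degree in $\lambda^{-1}$ the algebra is finite, but one must carefully track the involution $\sigma$, the conjugation $Q$, and the real structure of $\tilde F_+$, which is the tedious content the author defers to Section 6.
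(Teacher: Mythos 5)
Your observation that $\hat B_1\hat B_1^t=0$ holds automatically, and hence the converse part of the Section~3.2 characterization immediately yields that $Y\wedge\hat Y$ is an isotropic harmonic map with $[Y],[\hat Y]$ an adjoint Willmore pair, is correct and matches the paper's first reduction. Your stronger observation $\hat B_1(z_1)\hat B_1(z_2)^t\equiv 0$, hence $\eta_{-1}(z_1)\eta_{-1}(z_2)\eta_{-1}(z_3)=0$ and $F_-=I+\lambda^{-1}A_1+\lambda^{-2}A_2$, is also correct and is a clean way to see that the meromorphic frame is algebraic of low degree. However, the step ``the Iwasawa decomposition $F_-=\tilde F\tilde F_+$ preserves the algebraic-loop structure, so $\tilde F$ is Laurent polynomial of bounded degree'' is precisely the nontrivial theorem, not a formal consequence: for the non-compact group $SO^+(1,n+3)$ the Iwasawa splitting is only a local diffeomorphism onto an open dense subset (Theorem~\ref{thm-iwasawa}), and the fact that algebraic loops split into algebraic loops is the hard content of \cite{BuGu,Gu2002,DoWa2}. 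The paper sidesteps exactly this by making the Iwasawa decomposition explicit: it conjugates by the isometry $\mathcal{P}$ of Lemma~\ref{lemma-gm} to $G(2m+2,\mathbb{C})$, where $\mathcal{P}(\eta_{-1})$ becomes strictly upper-triangular, then writes down the Iwasawa factors $\tilde F, W, L$ in closed form (Lemma~\ref{lemma-gm-2}), so the algebraicity of $\tilde F$ is read off rather than deduced. Your argument should either invoke the cited general theory (after checking the normalized potential lies in a nilpotent Lie subalgebra, which \eqref{eq-H-iso-NP} does up to conjugation) or redo an explicit splitting; as written, there is a gap.

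The H-totally isotropic verification is where your plan is furthest from executable. You correctly identify the target conditions \eqref{eq-HTI-0}--\eqref{eq-HTI} and the strategy of reading them off the Maurer--Cartan form $\tilde\alpha=\tilde F^{-1}\dd\tilde F$, but you have no handle on $\tilde F$ itself. The paper's entire device for this is the isometry $\mathcal{P}$: once the potential is strictly upper-triangular in $G(2m+2,\mathbb{C})$, the Iwasawa factors $W_0, W_1, W_2, L$ can be solved for (equations~\eqref{eq-Iwasawa:1A}--\eqref{eq-Iwasawa:1F}), which gives the block structure \eqref{eq-alpha-1}--\eqref{eq-alpha-2} for $\tilde\alpha_0',\tilde\alpha_1'$; pulling this back through $\mathcal{P}^{-1}$ using the explicit entry formulas \eqref{eq-T1}--\eqref{eq-T4} then produces exactly the relations among $\mathbf{b}_{jk}$ that encode \eqref{eq-HTI-0} and \eqref{eq-HTI} (Lemma~\ref{lemma-gm-3}). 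Without $\mathcal{P}$ you would be confronting the Iwasawa decomposition of a $\lambda$-family in $SO^+(1,n+3,\mathbb{C})$, where the factorization is not amenable to direct computation. So the missing idea is precisely the change of coordinates to $G(2m+2,\mathbb{C})$, which is what converts the $(h,ih)$ column pairing (that you noticed) into literal upper-triangularity and makes the whole calculation finite.
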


To prove Theorem \ref{thm-main-2}, one need to perform an Iwasawa decomposition. A simple way to do this is to make the potential in \eqref{eq-H-iso-NP} being of strictly upper-triangle matrices. For this purpose, we will need a Lie group isometry. Then under this isometry, we can write down the Iwasawa decomposition in an explicit way. As a consequence, we can derive some geometric properties of the corresponding Willmore surfaces.

First we define a new Lie group as below
\begin{equation}
G(n,\mathbb{C})=\{A\in Mat(n,\mathbb{C})| A^tJ_nA=J_n, \det A=1
\},~ \hbox{with }~~J_n=\left(
      \begin{array}{ccc}
          &   & 1 \\
         & \iddots &  \\
        1 &  &   \\
      \end{array}
    \right).
\end{equation}

Theorem \ref{thm-main-2} can be derived from the following lemmas.

\begin{lemma}\label{lemma-gm} We have the Lie group isometry
 \begin{equation}\label{eq-Lie}
\begin{array}{ll}
 \mathcal{P}:     SO(1,2m+1,\mathbb{C})&\rightarrow  G(2m+2,\mathbb{C})\\
 \ \ \ \ \ \ \ A&\mapsto\tilde{P}_1^{-1} \tilde{P}^{-1}A\tilde{P}\tilde{P}_1,
\end{array}\end{equation}
with \[\tilde{P}= \frac{1}{\sqrt{2}}\left(
    \begin{array}{cccccccc}
      1 &  &   &  &   &   &   & -1  \\
       1 &  &    &   &   &   & & 1   \\
        & -i &   &  &  &   &  i &   \\
       &  1&   &  &   &   &   1&   \\
       &  &  \ddots  &  &     &  \iddots   &   &   \\
       &  &    & -i & i  &    &   &   \\
       &  &   & 1 &  1 &    &   &   \\
    \end{array}
  \right) \ \hbox{ and } \tilde{P}_1=\left(
                                                \begin{array}{cccc}
                                                  0 & 1 & 0 & 0\\
                                                  I_{m} & 0 & 0 & 0 \\
                                                  0 & 0 & 0 & I_{m} \\
                                                  0 & 0 & 1 & 0 \\
                                                \end{array}
                                              \right)
  .\]
Moreover, we have the following results.
\begin{enumerate}
\item $\mathcal{P}\left(SO(1,2m+1)\right)=\left\{B\in G(2m+2,\mathbb{C})| B=S_0\bar{B}S_0^{-1}\right\}$
with
\[S_0= \tilde{P}_1^{-1}\tilde{P}^{-1} \bar{\tilde{P}}\bar{\tilde{P}}_1  
                                                 = \left(
                                                \begin{array}{cccc}
                                                  0 & 0 &  J_{m}\\
                                                  0 & I_2 & 0 \\
                                                  J_{m} & 0 & 0  \\
                                                \end{array}
                                              \right).\]
So
$\mathcal{P}\left(\Lambda SO(1,2m+1)\right)=\{F\in\Lambda G(2m+2,\mathbb{C})|
\tau(F)=F\}$ with
 \begin{equation}\begin{array}{ll}
\tau:    G(2m+2,\mathbb{C}) &\rightarrow G(2m+2,\mathbb{C})\\
 \ \ \ \ \ \ \ F &\mapsto S_0\bar{F}S_0^{-1}.
\end{array}\end{equation}
And $\tau(F)^{-1}=\hat{J}\bar{F}^t\hat{J}^{-1}$ with
\[\hat{J}=\left(
            \begin{array}{ccc}
              I_{m} &   &  \\
               & J_{2} &  \\
               &  &  I_{m} \\
            \end{array}
          \right).\]
\item  The image of the subgroup $K^{\mathbb{C}}=SO(1,1,\C)\times SO(2m,\C)$ in $G(2m+2,\mathbb{C})$ is
\[\mathcal{P}(K^{\mathbb{C}}) =\{B\in G(2m+2,\mathbb{C})|\ B=D_0BD_0^{-1}\},\]
with
$ D_0= \tilde{P}_1^{-1}\tilde{P}^{-1}D\tilde{P} \tilde{P}_1=diag\left( I_{m},  -I_{2},  I_{m}
                               \right)$, $D=diag\left( -I_2,  I_{2m} \right)$.
\end{enumerate}

\end{lemma}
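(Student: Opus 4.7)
The proof reduces entirely to bookkeeping of matrix products; the conceptual content is the single observation that $\mathcal{P}$ is a change of basis that simultaneously diagonalizes the Lorentzian form $I_{1,2m+1}$, the reality structure of $\mathbb{R}^{2m+2}$, and the inner involution associated with $D$. My plan is to establish this at the level of the three transformations and then read off the three assertions as the corresponding ``transported'' structures on $G(2m+2,\mathbb{C})$.

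First I would verify directly that $(\tilde{P}\tilde{P}_1)^{t} I_{1,2m+1} (\tilde{P}\tilde{P}_1) = J_{2m+2}$. The columns of $\tilde{P}$ are organized into symmetric pairs: the outermost pair $\frac{1}{\sqrt{2}}(e_1+e_2,\,-e_1+e_2)$ converts the Lorentzian block $-x_1^{2}+x_2^{2}$ into the off-diagonal pair $2uv$; each interior pair $\frac{1}{\sqrt{2}}(-ie_j+e_{j+1},\,ie_j+e_{j+1})$ does the same for a positive-definite block. This gives $\tilde{P}^{t}I_{1,2m+1}\tilde{P}$ as an anti-diagonal matrix whose $1$'s occur at interlaced positions, and $\tilde{P}_1$ is precisely the permutation that collects them into the standard anti-diagonal $J_{2m+2}$. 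From this identity, $\mathcal{P}$ sends $SO(1,2m+1,\mathbb{C})$ into $G(2m+2,\mathbb{C})$; invertibility of the conjugation together with $\det\mathcal{P}(A)=\det A$ yields the Lie group isomorphism.

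For part (1), I would describe $SO(1,2m+1)\subset SO(1,2m+1,\mathbb{C})$ as the fixed-point set of $A\mapsto\bar A$ and transport this involution through $\mathcal{P}$: since $\tilde{P}_1$ is real but $\tilde{P}$ is not, the condition $\bar A = A$ translates into $\bar B = S_0^{-1}BS_0$ with $S_0 := \tilde{P}_1^{-1}\tilde{P}^{-1}\bar{\tilde{P}}\tilde{P}_1$. The product $\tilde{P}^{-1}\bar{\tilde{P}}$ acts by swapping each interior conjugate column pair (complex conjugation flips the sign of the $i$ within each pair) while fixing the first and last columns, and the outer conjugations by $\tilde{P}_1^{\pm 1}$ reassemble this into the stated block form of $S_0$. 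For the identity $\tau(F)^{-1}=\hat{J}\bar{F}^{t}\hat{J}^{-1}$, I would combine the defining relation $F^{-1}=J_{2m+2}F^{t}J_{2m+2}$ of $G(2m+2,\mathbb{C})$ with $\tau(F)^{-1}=S_0\bar{F}^{-1}S_0^{-1}$, reducing the whole assertion to the single block identity $S_0 J_{2m+2} = \hat{J}$, which follows from $J_m\cdot J_m = I_m$ and $J_2\cdot J_2=I_2$.

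For part (2), the same transport strategy gives: $A\in K^{\mathbb{C}}$ iff $DAD^{-1}=A$, iff $D_0 B D_0^{-1} = B$ where $D_0 := \tilde{P}_1^{-1}\tilde{P}^{-1}D\tilde{P}\tilde{P}_1$. The crucial observation is that the first and last columns of $\tilde{P}$ both lie in the $-1$-eigenspace of $D=\mathrm{diag}(-I_2,I_{2m})$, since they involve only $e_1,e_2$, while all interior columns lie in the $+1$-eigenspace; hence $\tilde{P}^{-1}D\tilde{P}=\mathrm{diag}(-1,1,\dots,1,-1)$, and the permutation $\tilde{P}_1$ moves the two outer positions into the central $2\times 2$ block, yielding $D_0=\mathrm{diag}(I_m,-I_2,I_m)$. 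The main obstacle throughout is purely notational: tracking the interlaced anti-diagonal pattern produced by $\tilde{P}$ through the permutation $\tilde{P}_1$ requires careful block accounting, but no deeper structural argument is needed.
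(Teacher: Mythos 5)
The paper gives no proof of this lemma, dismissing it as ``a straightforward computation [left] to the interested readers,'' so your task was to supply the computation from scratch. Your overall strategy—verify $(\tilde{P}\tilde{P}_1)^t I_{1,2m+1}(\tilde{P}\tilde{P}_1)=J_{2m+2}$, then transport the reality involution $A\mapsto\bar A$ and the inner involution $A\mapsto DAD^{-1}$ through the conjugation—is exactly the right organization, and the three transported identities ($S_0$, the $\hat J$ relation, and $D_0$) are all correctly derived. In particular, the reduction of the identity $\tau(F)^{-1}=\hat J\bar F^t\hat J^{-1}$ to the single relation $S_0 J_{2m+2}=\hat J$ (noting that $J S_0^{-1}=\hat J^{-1}$ then follows by inverting) is clean and correct, and the observation that $\tilde P^{-1}D\tilde P=\mathrm{diag}(-1,1,\dots,1,-1)$ with $\tilde P_1$ then reshuffling the two $-1$'s into the middle block is precisely what yields $D_0$.

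One small imprecision is worth noting: you say $\tilde P^t I_{1,2m+1}\tilde P$ has its $1$'s ``at interlaced positions'' and that $\tilde P_1$ ``collects them into the standard anti-diagonal.'' In fact columns $j$ and $2m+3-j$ of $\tilde P$ share the same two-row support, so $\tilde P^t I_{1,2m+1}\tilde P$ is \emph{already} $J_{2m+2}$; the permutation $\tilde P_1$ is compatible with $J_{2m+2}$ ($\tilde P_1^t J_{2m+2}\tilde P_1=J_{2m+2}$), and its actual role is to rearrange the coordinates so that the $D$-eigenspaces and the conjugation pairs line up with the block partition $(m,2,m)$, which is what makes the formulas for $S_0$, $\hat J$, and $D_0$ block-structured and is what is needed later in \eqref{eq-P-eta} and \eqref{eq-Iw}. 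This does not affect the validity of your argument—with either account one ends up with $(\tilde P\tilde P_1)^t I_{1,2m+1}(\tilde P\tilde P_1)=J_{2m+2}$—but the corrected picture explains more transparently why $\tilde P_1$ appears at all.
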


\begin{lemma}\label{lemma-gm-2}  Under the isometry of \eqref{eq-Lie}, we have the following results
\begin{enumerate}
\item For $\eta_{-1}$ in  \eqref{eq-H-iso-NP}, one has
 \begin{equation}\label{eq-P-eta}
\mathcal{P}(\eta_{-1})=\left(
                             \begin{array}{ccc}
                               0 & \check{f} & 0 \\
                               0 & 0 & -J_m\check{f}^tJ_2 \\
                               0 & 0 & 0 \\
                             \end{array}
                           \right), \hbox{ with } \check{f}=\left(
      \begin{array}{ccccccc}
                     \check{f}_{11} & \check{f}_{12} \\
                     \vdots & \vdots \\
                     \check{f}_{m1} & \check{f}_{m2} \\
      \end{array}
    \right),~ \check{f}^{\sharp}:= J_{m}\check{f}^tJ_2, \end{equation}
    and
    \[\check{f}_{j1}=i(h_{j1}-\hat h_{j1}),\ \check{f}_{j2}=-i(h_{j1}+\hat h_{j1}), \ j=1,\cdots,m.\]
\item Let $H$ be a solution to $H^{-1}dH=\lambda^{-1}\mathcal{P}(\eta_{-1})\dd z$, $H(0,0,\lambda)=I$. Then
\begin{equation}\label{eq-H}H=I+\lambda^{-1}H_1+\lambda^{-2}H_2=\left(
        \begin{array}{ccc}
          I & \lambda^{-1}f & \lambda^{-2}g \\
          0 & I & -\lambda^{-1}f^{\sharp} \\
          0 & 0 & I \\
        \end{array}
      \right),
 \end{equation}
 with   $f^{\sharp}:= J_{m}f^tJ_2$, $f=\int_0^z\check{f} dz$ and $g=-\int_0^zf{\check{f}^{\sharp}} dz$.
 \item We have the Iwasawa decomposition of $H$ as follows
 \begin{equation}\label{eq-Iw}
   \tilde{F}=H\tau(W)L^{-1}=\left(
        \begin{array}{ccc}
         (I-f\bar{u}^{\sharp}-gJ\bar{v}J)l_1^{-1} & \lambda^{-1}(f+gJ\bar{u})l_0^{-1} & \lambda^{-2}gl_4^{-1}\\
          -\lambda (\bar{u}^{\sharp}J+f^{\sharp}J\bar{v}J)l_1^{-1}  & (I-f^{\sharp}J\bar{u})l_0^{-1} & -\lambda^{-1}f^{\sharp}l_4^{-1} \\
          \lambda^{2}J\bar{v}Jl_1^{-1}  & \lambda J \bar{u}l_0^{-1} & l_4^{-1} \\
        \end{array}
      \right).
 \end{equation}
 Here $W= I+\lambda^{-1}W_1+\lambda^{-2}W_2 $ and $ L=\hbox{diag}\{l_1,l_0,l_4\}$ satisfy
\[W_0=\left(
        \begin{array}{ccc}
          a & 0 &  0 \\
          0 & q & 0 \\
          0 & 0 &  \varrho \\
        \end{array}
      \right)=\tau(L)^{-1}L,\  W_1=\left(
        \begin{array}{ccc}
          0 & u & 0 \\
           0 & 0 & -u^{\sharp} \\
          0 & 0 & 0 \\
        \end{array}
      \right),\ W_2=\left(
        \begin{array}{ccc}
          0 & 0 & v \\
           0 & 0 & 0 \\
          0 & 0 & 0 \\
        \end{array}
      \right).\]
Here $a,q,d, u,v$ are solutions to the following equation
\begin{subequations} \label{eq-Iwasawa:1}
\begin{align}
&a+ uqJ\bar{u}^t+ v \varrho\bar{v}^t=I,\label{eq-Iwasawa:1A}\\
& uq-v\varrho \bar{u}^{\sharp t}J= f,\label{eq-Iwasawa:1B}\\
&v \varrho=g,\label{eq-Iwasawa:1C}\\
&q + u^{\sharp}\varrho\bar{u}^{\sharp t}J =I+J\bar{f}^tf,\label{eq-Iwasawa:1D}\\
&  u^{\sharp}\varrho =f^{\sharp}-J\bar{f}^tg,\label{eq-Iwasawa:1E}\\
& \varrho=I+\bar{f}^{t\sharp}J {f}^{ \sharp}+ \bar{g}^{t}g.\label{eq-Iwasawa:1F}
\end{align}
\end{subequations}
\item The M-C form of $\tilde{F}$ has the form
   \begin{equation}\label{eq-alpha-1}\tilde{\alpha}_1'=\left(
                             \begin{array}{ccc}
                               0 & l_1f'l_0^{-1} & 0 \\
                               0 & 0 & -l_0{f^{\sharp}}'l_4^{-1} \\
                               0 & 0 & 0 \\
                             \end{array}
                           \right)\dd z,\end{equation}
                           \begin{equation}\label{eq-alpha-2} \tilde{\alpha}_0'=\lambda^{-1}\left(
                             \begin{array}{ccc}
                               -f'  \bar{u}^{\sharp}J -l_{1z}l_1^{-1} & 0& 0 \\
                               0 & - \bar{u}^{\sharp}J f' -{f^{\sharp}}'J\bar{u}-l_{0z}l_0^{-1} & 0 \\
                               0 & 0 & -J\bar{u}{f^{\sharp}}'-l_{4z}l_4^{-1}\\
                             \end{array}
                           \right)\dd z.\end{equation}

\end{enumerate}

\end{lemma}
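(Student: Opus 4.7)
The plan is to verify the four parts of Lemma \ref{lemma-gm-2} by unpacking definitions and carrying out an explicit Iwasawa decomposition. Part (1) is a direct matrix computation: substitute the form of $\hat B_1$ from Theorem \ref{thm-main-1} into $\eta_{-1}$ and conjugate by $\tilde P \tilde P_1$. The column-pair structure of $\hat B_1$ (each pair of columns differing by the factor $i$) aligns with the diagonalization of the complex structure encoded in $\tilde P$, collapsing the block into the three-step strictly upper-triangular nilpotent form. The explicit identification of $\check f_{j1}, \check f_{j2}$ with the $h_{j1}, \hat h_{j1}$ is then read off by collecting entries in the conjugated matrix.

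For part (2), because $\mathcal{P}(\eta_{-1})$ is nilpotent of order three, the Dyson series for the ODE $dH = \lambda^{-1} H\,\mathcal{P}(\eta_{-1})\,dz$, $H(0,\lambda)=I$, terminates after two iterations, forcing the polynomial ansatz $H = I + \lambda^{-1} H_1 + \lambda^{-2} H_2$. Matching the $\lambda^{-1}$ coefficient gives $dH_1 = \mathcal{P}(\eta_{-1}) dz$, producing the displayed $f$; matching the $\lambda^{-2}$ coefficient gives $dH_2 = H_1\,\mathcal{P}(\eta_{-1})\,dz$, whose single non-zero block integrates to $-\int_0^z f\,\check f^{\sharp}\,dz = g$.

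Part (3) is the technical heart. I would use the ansatz $\tilde F = H\,\tau(W)\,L^{-1}$ and argue as follows. On $|\lambda|=1$ the identification $\bar\lambda = \lambda^{-1}$ together with the block swap by $S_0$ shows that $\tau(W)$ has only non-negative powers of $\lambda$, so $\tilde F_+ := L\,\tau(W)^{-1}$ automatically lies in $\Lambda^+ G^{\mathbb{C}}_{\sigma}$. The reality condition $\tau(\tilde F) = \tilde F$ then reduces, after elementary manipulation, to the single matrix identity
$$W\,W_0\,\tau(W)^{-1} = H^{-1}\,\tau(H), \qquad W_0 := \tau(L)^{-1} L \text{ (block diagonal).}$$
Computing $H^{-1}\tau(H)$ from the explicit $H$ of part (2) and matching the nine block entries with those of $W\,W_0\,\tau(W)^{-1}$ yields exactly the six equations (\ref{eq-Iwasawa:1A})--(\ref{eq-Iwasawa:1F}) in the unknowns $a, q, \varrho, u, v$; the three off-diagonal block equations that would otherwise appear collapse because $W_0$ is block diagonal. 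Substituting the resulting $W, L$ back into $H\,\tau(W)\,L^{-1}$ and expanding the triple product recovers the nine block entries listed for $\tilde F$. For part (4), differentiating $\tilde F = H\,\tilde F_+^{-1}$ and using $H^{-1}dH = \lambda^{-1}\mathcal{P}(\eta_{-1})dz$ gives $\tilde F^{-1}d\tilde F = \tilde F_+\cdot\lambda^{-1}\mathcal{P}(\eta_{-1})\,dz\cdot\tilde F_+^{-1} - d\tilde F_+\cdot\tilde F_+^{-1}$; reading off the $\lambda^{-1}$ coefficient (using $\tilde F_+|_{\lambda=0} = L$) produces $\tilde\alpha_1'$, while the $\lambda^0$ coefficient, in which $dL\cdot L^{-1}$ contributes the logarithmic derivatives $l_{*z}l_*^{-1}$, produces $\tilde\alpha_0'$.

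The main obstacle is part (3). Although Theorem \ref{thm-iwasawa} guarantees existence and uniqueness of the decomposition abstractly, exhibiting it in closed form --- justifying the triangular-in-$\lambda$ ansatz for $W$, verifying that block-diagonality of $W_0$ eliminates the unwanted equations, and matching every one of the nine block entries to recover the form (\ref{eq-Iw}) --- is the technically demanding step and the one on which the remainder of the section depends. Once it is in place, parts (1), (2), and (4) are essentially mechanical.
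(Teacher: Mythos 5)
Your overall strategy matches the paper's own proof for all four parts: part (1) is the same direct substitution into the explicit formulas for $\mathcal{P}$; part (2) is the same observation that the nilpotency forces a terminating expansion (the paper simply verifies the ODE by hand, but the argument is identical in substance); part (3) is the same explicit block-matching Iwasawa computation; and part (4) is the same Maurer--Cartan manipulation using $\tilde F_+ = L\,\tau(W)^{-1}$. So this is essentially a correct outline of the paper's proof rather than a genuinely different route.

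Two things you should tighten. First, the reality identity you state has a slip: starting from $\tilde F = H\,\tau(W)\,L^{-1}$ and imposing $\tau(\tilde F)=\tilde F$ gives $\tau(H)\,W\,\tau(L)^{-1}=H\,\tau(W)\,L^{-1}$, hence $\tau(H)^{-1}H = W\,W_0\,\tau(W)^{-1}$ with $W_0=\tau(L)^{-1}L$; you wrote $H^{-1}\tau(H)$ on the right-hand side, which is the \emph{inverse} of the correct expression. Second, the step you flag as "technically demanding" is indeed where the paper does more than you describe: it starts from a more general ansatz in which $W_0$ carries off-diagonal blocks $b,c$ and $W_1$ carries a lower block $u_0$, and then \emph{derives} $b=c=0$ and $u_0=0$ from the comparison of $\lambda$-coefficients. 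Asserting that "the three off-diagonal block equations collapse because $W_0$ is block diagonal" puts the conclusion in the place of the hypothesis; what actually happens is that the lower-triangular block equations are forced to be compatible by the anti-involution $\tau(\tau(H)^{-1}H)=(\tau(H)^{-1}H)^{-1}$, and the vanishing of the extra unknowns $b,c,u_0$ is a consequence of the matching, not an input. If you carry out the matching with the general ansatz first, your plan is sound.
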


\begin{lemma}\label{lemma-gm-3}  Under the isometry of \eqref{eq-Lie}, $y$ is an $H-$totally isotropic surface   with an adjoint transform $\hat y=[\hat Y]$.
\end{lemma}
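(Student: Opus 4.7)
The plan is to extract the surface $Y$, its adjoint $\hat Y$, and an adapted complex frame from the explicit Iwasawa formula \eqref{eq-Iw} for $\tilde F$, then verify the two defining properties of an $H$-totally isotropic surface---total isotropy and the horizontal condition \eqref{eq-HTI}---directly from the block structure of $\tilde\alpha_1'$ and $\tilde\alpha_0'$ in \eqref{eq-alpha-1}--\eqref{eq-alpha-2}. First I would invert the isomorphism $\mathcal{P}$ of Lemma \ref{lemma-gm}: writing $F = \tilde P\tilde P_1\,\tilde F\,(\tilde P\tilde P_1)^{-1}$ at $\lambda=1$ (up to a $K$-gauge) identifies the first two columns of $F$ with $\tfrac{1}{\sqrt 2}(Y+\hat Y)$ and $\tfrac{1}{\sqrt 2}(-Y+\hat Y)$, while the remaining $2m$ columns, paired off via $\tilde P$, produce a complex frame $\{E_1,\ldots,E_m,\bar E_1,\ldots,\bar E_m\}$ of the complexification $V^{\perp}_{\C}\oplus\mathrm{span}_{\C}\{Y_z\bmod Y\}$ satisfying $\langle E_j,\bar E_l\rangle=2\delta_{jl}$ and $\langle E_j,E_l\rangle=0$.

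Second, for the isotropic chain \eqref{eq-HTI-0}, the key observation is that $\tilde\alpha_1'$ in \eqref{eq-alpha-1} is strictly block upper-triangular with only two nonzero blocks, hence $(\tilde\alpha_1')^3 = 0$ and the flag in the frame is nilpotent of length three. Under the identification from the first step, iteratively applying $\tilde\alpha_{\lambda}|_{\lambda=1}$ to the $Y$-column shows that $Y_z^{(j)}$ lands in the $j$th layer of this flag, i.e.\ $Y_z^{(j)}\in\mathrm{span}_{\C}\{Y,E_1,\ldots,E_j\}$ for each $j$. Total isotropy $\langle Y_z^{(j)},Y_z^{(l)}\rangle=0$ then follows from $\langle E_j,E_l\rangle=0$. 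Since $\eta$ already satisfies the condition $\hat B_1\hat B_1^t=0$ of \eqref{eq-b1 of a}, the harmonic map $\mathcal{F}=Y\wedge\hat Y$ is automatically isotropic by the results of Section 3, and hence $[\hat Y]$ is an adjoint transform of $[Y]$ at the points both are immersed, by Ma's characterization.

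Third, for the horizontal condition \eqref{eq-HTI}, I would read off the normal connection $D_z$ from the block-diagonal $\mathfrak{k}$-part $\tilde\alpha_0'$ in \eqref{eq-alpha-2}: because the three diagonal blocks are square and there is no mixing between the $E$-block and the $\bar E$-block---this is precisely the content of Lemma \ref{lemma-gm}(2) on the image of $\mathcal{P}(\mathfrak{k}^{\C})$---one obtains $D_zE_j\in\mathrm{span}_{\C}\{E_1,\ldots,E_m\}$. Conjugating gives $D_{\bar z}E_j\in\mathrm{span}_{\C}\{\bar E_1,\ldots,\bar E_m\}$, and the absence of an $\bar E_1$-component for $j\geq 2$ follows from the fact that the $E_1$-direction sits at the very top of the nilpotent flag. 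This last step---tracking which columns of $\tilde F$ correspond to $E_j$ versus $\bar E_j$ under the compound change of basis $\tilde P\tilde P_1$ and the reality involution $\tau$---is where I expect the main obstacle to lie, and it is precisely the kind of tedious bookkeeping the authors defer to Section 6.
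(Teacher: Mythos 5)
The overall strategy you outline---inverting $\mathcal{P}$ to recover the frame $F$ in $SO^+(1,2m+1)$ and then reading off the isotropy and horizontal conditions from the block structure of the Maurer--Cartan form---is indeed the paper's approach. However, there is a concrete error in your handling of the horizontal condition \eqref{eq-HTI}.

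You write: ``one obtains $D_zE_j\in\mathrm{span}_{\C}\{E_1,\ldots,E_m\}$. Conjugating gives $D_{\bar z}E_j\in\mathrm{span}_{\C}\{\bar E_1,\ldots,\bar E_m\}$.'' This step is wrong on two counts. First, conjugating the relation $D_z E_j\in\mathrm{span}_\C\{E_1,\ldots,E_m\}$ yields $D_{\bar z}\bar E_j\in\mathrm{span}_\C\{\bar E_1,\ldots,\bar E_m\}$---the conjugation acts on the section $E_j$ as well, so you obtain information about $\bar E_j$, not $E_j$. Second, and more seriously, even if you somehow had $D_{\bar z}E_j\in\mathrm{span}_\C\{\bar E_1,\ldots,\bar E_m\}$, that would \emph{contradict} rather than establish the horizontal condition \eqref{eq-HTI}, which demands $D_{\bar z}E_j\in\mathrm{span}_\C\{E_2,\ldots,E_m\}$---membership in the span of the $E_k$'s, not the $\bar E_k$'s. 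What must actually be shown is that $D_{\bar z}E_j$ has \emph{no} $\bar E_k$-components.

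The paper achieves this by applying the explicit formulas \eqref{eq-T1}--\eqref{eq-T4} to $\mathcal{P}^{-1}$ of $\tilde\alpha_1'$, $\tilde\alpha_0'$ and deriving the entry relations
\[
\mathbf{b}_{2j+1,2k+1}=\mathbf{b}_{2j+2,2k+2},\qquad \mathbf{b}_{2j+1,2k+2}=-\mathbf{b}_{2j+2,2k+1},
\]
which encode the fact that $\mathcal{P}(\mathfrak{k}^{\C})$ is block-diagonal with respect to the $E/\bar E$ splitting. Since $F$ is real, the $\bar z$-part of the Maurer--Cartan form is the complex conjugate of the $z$-part, and the conjugated relations $\bar{\mathbf{b}}_{2j+1,2k+1}=\bar{\mathbf{b}}_{2j+2,2k+2}$, etc., then yield \emph{directly} that $(\psi_j+i\hat\psi_j)_{\bar z}=-\sum_k(\bar{\mathbf{b}}_{2j+1,2k+1}+i\bar{\mathbf{b}}_{2j+2,2k+1})(\psi_k+i\hat\psi_k)\ \mathrm{mod}\ \{Y,\hat Y\}$, i.e.\ $D_{\bar z}E_j$ lands in $\mathrm{span}_\C\{E_1,\ldots,E_m\}$ with no $\bar E$-contamination (and the $E_1$-component drops out upon projecting to the normal bundle, since $E_1$ is tangent). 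Your intuition about the ``no mixing between the $E$-block and $\bar E$-block'' is exactly right, but the argument has to be run on the $\bar z$-part of the Maurer--Cartan form itself rather than by conjugating the already-established statement about $D_z E_j$, and the conclusion you state points in the wrong direction.

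A secondary, smaller issue: your claim that $Y_z^{(j)}\in\mathrm{span}_\C\{Y,E_1,\ldots,E_j\}$ follows from $(\tilde\alpha_1')^3=0$ alone is not quite complete; iterated differentiation of $Y$ involves the full $\alpha_\lambda$, and one must also invoke that $\tilde\alpha_0'$ is block-diagonal so that it preserves the flag filtration. The paper handles this implicitly through the explicit entry relations.
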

\begin{lemma}\label{lemma-gm-4}  Under the isometry of \eqref{eq-Lie}, $\mathcal{P}(\eta_{-1})$  takes value in the nilpotent Lie sub-algebra
\[\mathfrak{g}_{nil}=\left\{X\in \Lambda\mathfrak{g}(2m+2,\C) \left|X=\lambda^{-1}\left(
                                                                        \begin{array}{ccc}
                                                                          0 & \cdots & 0 \\
                                                                          0 & 0 & \cdots \\
                                                                          0 & 0 & 0 \\
                                                                        \end{array}
                                                                      \right)+\lambda^{-2}\left(
                                                                        \begin{array}{ccc}
                                                                          0 & 0& \cdots \\
                                                                          0 & 0 & 0 \\
                                                                          0 & 0 & 0 \\
                                                                        \end{array}
                                                                      \right)
\right.\right\}.\]
 As a consequence, $\mathcal{F}=Y\wedge\hat Y$ is  of finite uniton number at most $2$.
\end{lemma}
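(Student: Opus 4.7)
The first claim is immediate from Lemma \ref{lemma-gm-2}. By \eqref{eq-P-eta}, the matrix $\mathcal{P}(\eta_{-1})$ is strictly block upper triangular with nonzero entries only in the $(1,2)$ block (equal to $\check{f}$) and in the $(2,3)$ block (equal to $-J_m\check{f}^tJ_2$); its $(1,3)$ block vanishes. Because the normalized potential $\eta$ in \eqref{eq-H-iso-NP} carries only the power $\lambda^{-1}$, the loop $\lambda^{-1}\mathcal{P}(\eta_{-1})$ lies in the first summand of $\mathfrak{g}_{nil}$ with zero $\lambda^{-2}$ component. This proves the containment statement.

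For the uniton bound, the key observation is that $\mathcal{P}(\eta_{-1})^3=0$: each matrix multiplication shifts nonzero block entries one block further toward the upper right corner, so after three applications all entries vanish. Consequently the unique solution $H$ of $H^{-1}\dd H=\lambda^{-1}\mathcal{P}(\eta_{-1})\dd z$ with $H(0,\lambda)=I$ truncates after two iterations to the Laurent polynomial $H=I+\lambda^{-1}H_1+\lambda^{-2}H_2$ recorded in \eqref{eq-H}. Performing the Iwasawa decomposition \eqref{eq-Iw} computed in Lemma \ref{lemma-gm-2} then produces an explicit extended frame $\tilde{F}$ for $\mathcal{F}$ whose Fourier coefficients in $\lambda$ are supported on $\{-2,-1,0,1,2\}$.

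To conclude that $\mathcal{F}$ has finite uniton number at most $2$, I would invoke the general result cited in Section 3.3 (from \cite{BuGu,Gu2002,DoWa2}) that characterizes harmonic maps of finite uniton type as precisely those whose normalized potential takes values in a nilpotent Lie subalgebra, with the nilpotency degree controlling the uniton number. The main delicate point I expect is to verify that the polynomial degree of $\tilde{F}$ really matches the uniton number as defined in Section 3.3 in terms of $\mathrm{Ad}(\tilde{F})$, rather than being loose by a factor. The cleanest way around this is to appeal directly to the Burstall--Guest correspondence, under which the $3$-step nilpotency of $\mathcal{P}(\eta_{-1})$ translates to uniton number at most $2$ for $\mathcal{F}$; alternatively one could verify this by explicit computation of $\mathrm{Ad}(\tilde{F})$ using the Iwasawa identities \eqref{eq-Iwasawa:1A}--\eqref{eq-Iwasawa:1F}, but no new ingredient is needed beyond what Lemma \ref{lemma-gm-2} already supplies.
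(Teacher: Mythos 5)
Your proof is correct and follows the same route the paper indicates: read off the strictly block-upper-triangular shape of $\mathcal{P}(\eta_{-1})$ from \eqref{eq-P-eta} to get the containment, observe the $3$-step nilpotency to get the explicit polynomial solution $H$ of \eqref{eq-H}, and then invoke the general criterion from \cite{BuGu,Gu2002,DoWa2} cited in Section 3.3. Notably, the paper itself offers no argument here beyond the remark that the lemma ``holds apparently,'' so your write-up is strictly more detailed than the source; in particular, you are right to flag the one genuinely nontrivial point — that the paper's definition of uniton number is phrased in terms of $\mathrm{Ad}$ of the extended solution (obtained from $\tilde F$ via the Cartan map $g\mapsto g\sigma(g)^{-1}$), not the polynomial degree of $\tilde F$ itself — and your resolution (appeal directly to the Burstall--Guest/Dorfmeister--Wang correspondence between nilpotency degree of the potential and uniton number) is exactly the justification the paper implicitly relies on.
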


Here the proof of Lemma \ref{lemma-gm} is  a straightforward computation so that we leave it to the interested readers. Lemma \ref{lemma-gm-4} holds apparently.  The proof of Lemma \ref{lemma-gm-2} and \ref{lemma-gm-3} will be given in Section 6, since it involves many technical computations.

\subsection{Constructions of examples}
In this subsection we will provide two kinds of examples. The first one concerns the new Willmore two-sphere derived in \cite{DoWa1}, which is the first example of Willmore two-spheres in $S^6$ admitting no dual surfaces. Here we will derive this surface together with one of its adjoint surface. This also indicates that it can be derived from an adjoint transform of some minimal surface in $\mathbb R^6$.

The next example concerns one of the most simple Willmore two-spheres in $S^4$, i.e., the one derived by a holomorphic curve in $\C^2=\mathbb R^4$ with total curvature $-4\pi$.

\begin{theorem}\label{thm-example-1} Let
\[\eta=\lambda^{-1}\eta_{-1}\dd z\ \hbox{ with }   \hat B_1=\frac{1}{2}\left(
                                                           \begin{array}{cccccc}
                                                             -i & 1 & i &-1 &-2iz & 2z \\
                                                             i & -1 & i  &-1 & 2iz & -2z \\
                                                           \end{array}
                                                         \right).\]
Then the associated family of the corresponding isotropic harmonic maps is $Y\wedge \hat Y$, with
\begin{equation}\label{eq-Y-ex-1}
  Y=-\frac{\sqrt{2}}{2\varsigma} \left(
    \begin{array}{c}
      -1-r^2-\frac{r^4}{4}-\frac{r^6}{9}  \\
      1-r^2 +\frac{r^4}{4}+\frac{r^6}{9}  \\
      \frac{ir^2}{2}(\lambda^{-1}z-\lambda \bar{z})  \\
     - \frac{r^2}{2}(\lambda^{-1}z+\lambda \bar{z})  \\
      -i (\lambda^{-1}z-\lambda \bar{z}) \\
       \lambda^{-1}z+\lambda \bar{z} \\
       \frac{ir^2}{3}(\lambda^{-1}z^2-\lambda \bar{z}^2)  \\
      - \frac{r^2}{3}(\lambda^{-1}z^2+\lambda \bar{z}^2)   \\
    \end{array}
  \right),~~
~ \hat{Y}=\frac{\sqrt{2}}{2\varsigma}\left(
                          \begin{array}{c}
                             1+r^2+\frac{5r^4}{4}+\frac{4r^6}{9}+\frac{r^8}{36}  \\
                            1-r^2-\frac{3r^4}{4}+\frac{4r^6}{9}-\frac{r^8}{36}  \\
                            -i (\lambda^{-1}z-\lambda \bar{z})(1+\frac{r^6}{9})  \\
                             (\lambda^{-1}z+\lambda \bar{z})(1+\frac{r^6}{9})  \\
                            \frac{i r^2}{2}(\lambda^{-1}z-\lambda \bar{z})(1+\frac{4r^2}{3})  \\
                            -\frac{r^2}{2} (\lambda^{-1}z+\lambda \bar{z})(1+\frac{4r^2}{3})  \\
                            -i (\lambda^{-1}z^2-\lambda \bar{z}^2)(1-\frac{r^4}{12})  \\
                          (\lambda^{-1}z^2+\lambda \bar{z}^2)(1-\frac{r^4}{12}) \\
                          \end{array}
                        \right).
\end{equation}
Here $r=|z|$, $\varsigma=\left|1-\frac{r^4}{4}-\frac{2r^6}{9}\right|$.  Moreover, we have
\begin{enumerate}
\item Set $\hat Y= (\hat y_0,\cdots,\hat y_7)^t$. Then $\hat y=\frac{1}{\hat{y}_0}(\hat{y}_1,\cdots,\hat{y}_7)^t=[\hat{Y}]$ is an H-totally isotropic, Willmore immersion from $S^2$ to $S^6$, with metric
\[|\hat y_z|^2|\dd z|^2=\frac{2(1+4r^2+\frac{r^4}{4}+\frac{2r^6}{9}+\frac{4r^8}{9}+\frac{r^{10}}{36}+\frac{r^{12}}{81})}{\left(1+r^2+\frac{5r^4}{4}+\frac{4r^6}{9}+\frac{r^8}{36}\right)^2}|\dd z|^2.
\]
$[\hat{Y}]$ has no dual surface.
\item Set $Y= (y_0,\cdots,y_7)^t$. Then $y=\frac{1}{y_0}(y_1,\cdots,y_7)^t=[Y]$ is an H-totally isotropic, Willmore immersion from $\mathbb{C}$ to $S^6$, with metric
\[|  y_z|^2|\dd z|^2=\frac{2(1+\frac{r^4}{4}+\frac{4r^6}{9})}{\left(1+r^2+\frac{r^4}{4}+\frac{r^6}{9}\right)^2}|\dd z|^2.
\]
Note that $[Y]$ is a Willmore map from $S^2$, with a branched point $z=\infty$. Moreover, $ y$ is conformally equivalent to the minimal surface $ x$ in $\mathbb R^8$:
\[   x= \left(
    \begin{array}{c}
      i (\lambda^{-1}z-\lambda \bar{z})  \\
     - (\lambda^{-1}z+\lambda \bar{z})\\
      -\frac{2i}{r^2}\left(\lambda^{-1}z-\lambda \bar{z} \right) \\
      \frac{2}{r^2}\left(\lambda^{-1}z+\lambda \bar{z} \right) \\
      \frac{2i}{3}(\lambda^{-1}z^2-\lambda \bar{z}^2)\\
      -\frac{2}{3}(\lambda^{-1}z^2+\lambda \bar{z}^2) \\
    \end{array}
  \right). \]
\item The harmonic map $Y\wedge \hat Y$ has no definition on the curve $1-\frac{r^4}{4}-\frac{2r^6}{9}=0$. But the maps $[Y]$ and $[\hat Y]$ are well defined on the whole two-sphere $S^2$.
\end{enumerate}
\end{theorem}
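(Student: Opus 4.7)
The strategy is to run the DPW recipe in the explicit form made available by Lemma~\ref{lemma-gm-2}. First I would transport the potential to the $G(2m+2,\C)$-picture using the isomorphism $\mathcal{P}$ of Lemma~\ref{lemma-gm}, and read off $\check{f}$ from $\hat B_1$ via the relations $\check{f}_{j1}=i(h_{j1}-\hat h_{j1})$, $\check{f}_{j2}=-i(h_{j1}+\hat h_{j1})$. For the prescribed $\hat B_1$ (with $m=3$) this gives a $3\times 2$ matrix $\check{f}$ whose rows are, up to scalar, $(1,i)$, $(-1,i)$, $(2iz,-2z)$ times simple constants, hence $f=\int_0^z\check{f}\,dz$ and $g=-\int_0^z f\,\check{f}^{\sharp}\,dz$ are easily evaluated in closed form; these furnish the holomorphic block $H$ in \eqref{eq-H}.

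The main computational obstacle is the Iwasawa factorization $H=\tilde{F}\cdot L\tau(W)^{-1}$ encoded in \eqref{eq-Iwasawa:1A}--\eqref{eq-Iwasawa:1F}: one must solve for $u,v,a,q,\varrho$ and the diagonal block $L=\mathrm{diag}(l_1,l_0,l_4)$ as functions of $(z,\bar z,\lambda)$. The plan is to first use \eqref{eq-Iwasawa:1F} to obtain $\varrho$ explicitly from $f$ and $g$, then \eqref{eq-Iwasawa:1E} to get $u^{\sharp}$, then \eqref{eq-Iwasawa:1D} to obtain $q$, then \eqref{eq-Iwasawa:1C},\eqref{eq-Iwasawa:1B} to get $v$ and $u$, and finally \eqref{eq-Iwasawa:1A} to recover $a$; the diagonal factor $L$ is fixed (up to a unitary piece) by $W_0=\tau(L)^{-1}L$. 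Because the $\check{f}$ here has a nilpotent product $\check{f}\check{f}^{\sharp}$ with only polynomial entries in $z,\bar z$, all resulting quantities are rational in $r^2=|z|^2$, producing the denominators $1+r^2+\frac{5r^4}{4}+\frac{4r^6}{9}+\frac{r^8}{36}$ and $1+r^2+\frac{r^4}{4}+\frac{r^6}{9}$ that appear in \eqref{eq-Y-ex-1}. Plugging into the matrix \eqref{eq-Iw} and applying $\mathcal{P}^{-1}$, the first two columns of the resulting $SO^+(1,7)$-frame give $\frac{1}{\sqrt 2}(Y+\hat Y)$ and $\frac{1}{\sqrt 2}(-Y+\hat Y)$, from which $Y$ and $\hat Y$ are extracted and seen to match \eqref{eq-Y-ex-1}.

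Having the explicit formulas, the remaining assertions are verified in order. The H-totally isotropic and Willmore properties of both $[Y]$ and $[\hat Y]$ follow from Theorem~\ref{thm-main-2}. The metrics $|y_z|^2|dz|^2$ and $|\hat y_z|^2|dz|^2$ are computed by differentiating the projective representatives; here I would factor out the common denominator $\varsigma$ so that the numerators of $|y_z|^2$ and $|\hat y_z|^2$ collapse to the polynomials claimed. To see that $[\hat Y]$ is globally defined on $S^2$, one checks that the polynomial $1+r^2+\frac{5r^4}{4}+\frac{4r^6}{9}+\frac{r^8}{36}$ has no real zero and then that every homogeneous coordinate of $\hat Y$ extends smoothly as $z\to\infty$ after multiplying by an appropriate power of $\bar z$; the same analysis applied to $Y$ exhibits the branch point at $\infty$ from the vanishing of the leading behaviour of the first two components. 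The zero set $1-\frac{r^4}{4}-\frac{2r^6}{9}=0$ is precisely where $\varsigma=0$, so $Y\wedge\hat Y$ fails to define a map into the non-compact Grassmannian there; nevertheless the projective classes $[Y]$ and $[\hat Y]$ remain regular since one can rescale $Y$ and $\hat Y$ by $\varsigma$ without changing the class. Finally, that $[\hat Y]$ admits no dual surface is read off from the rank of $\hat B_1$ (it is $2$, not $1$) via the last sentence of the theorem in Section~3.2, while the minimal surface interpretation of $[Y]$ follows from Theorem~\ref{thm-minimal}: one checks that in a suitable conjugate the $B_1$-block of the frame of $Y$ has the form $(b_1,b_1)^t$, and a direct calculation identifies the corresponding Euclidean minimal immersion with $x$ (the six components displayed); the extraneous ``$\mathbb R^8$'' in the statement is then seen to be a harmless typo for $\mathbb R^6$.
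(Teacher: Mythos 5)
Your overall plan is the same as the paper's actual computation in the appendix: pass to the $G(2m+2,\C)$-picture via $\mathcal P$, read off $\check f$, integrate to get $f,g$, solve the Iwasawa system \eqref{eq-Iwasawa:1A}--\eqref{eq-Iwasawa:1F} for $\varrho,u,q$, insert into \eqref{eq-Iw}, and extract $Y,\hat Y$ from \eqref{eq-YandY}. Two things, however, need correction.

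First, your reading of $\check f$ is wrong. From the structure of $\hat B_1$ in \eqref{eq-H-iso-NP} we have $h_{11}=-\tfrac{i}{2}$, $\hat h_{11}=\tfrac{i}{2}$, $h_{21}=\hat h_{21}=\tfrac{i}{2}$, $h_{31}=-iz$, $\hat h_{31}=iz$, so the formulas $\check f_{j1}=i(h_{j1}-\hat h_{j1})$, $\check f_{j2}=-i(h_{j1}+\hat h_{j1})$ give
\[
\check f=\begin{pmatrix}1&0\\0&1\\2z&0\end{pmatrix},
\]
not a matrix with rows proportional to $(1,i)$, $(-1,i)$, $(2iz,-2z)$. (The latter would force $\check f\,\check f^{\sharp}$ to degenerate in a different way and the ensuing $f,g,\varrho$ would not reproduce the denominators $1-\tfrac{r^4}{4}-\tfrac{2r^6}{9}$ appearing in \eqref{eq-Y-ex-1}.) A side remark: your list of which equations of \eqref{eq-Iwasawa:1A}--\eqref{eq-Iwasawa:1F} to solve for $v$, $a$, $l_1$, $l_4$ is more than is needed; to recover $Y,\hat Y$ from \eqref{eq-YandY} one needs only the middle block column of \eqref{eq-Iw}, i.e.\ $f+gJ\bar u$, $I-f^{\sharp}J\bar u$, $J\bar u$, and $l_0$, and the paper stops after finding $\varrho$, $u$ and $q=I_2$ (so $l_0=I_2$).

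Second, the argument for ``$[\hat Y]$ has no dual surface'' is too weak. The criterion ``$\mathrm{rank}(\hat B_1)=1$'' in Section~3.2 characterizes when $[Y]$ and $[\hat Y]$ are \emph{dual to each other}; having $\mathrm{rank}(\hat B_1)=2$ only rules out this particular duality and does not by itself preclude $[\hat Y]$ from being S-Willmore with some other dual surface (a priori coming from a different choice of $\mu$ and hence a different isotropic harmonic map). The paper does not re-derive this fact here; $[\hat Y]$ is identified with the example of \cite{DoWa1}, where the absence of a dual surface is established by showing that $\hat\kappa$ and $D_{\bar z}\hat\kappa$ are linearly independent. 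You would need to either invoke that reference or carry out that explicit check on the conformal Hopf differential of $\hat Y$.

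The remaining assertions (globality of $[\hat Y]$ and of $[Y]$ after adding $z=\infty$, the branch point, the metrics, the identification with a minimal surface in $\mathbb R^6$ rather than the typographical $\mathbb R^8$, and the singular locus $\varsigma=0$ of $Y\wedge\hat Y$) are handled correctly in your plan and match the paper's treatment.
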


\begin{remark}\
\begin{enumerate}
\item From this we see that it is possible that although the harmonic map $Y\wedge \hat Y$ is not globally well-defined, the Willmore surfaces $[Y]$ and $[\hat Y]$ are well-defined. This is a very interesting phenomena to be explained, which may be related to the Iwasawa decompositions of the loop group $\Lambda G_{\sigma}^{\C}$ of the non-compact group $G=SO^{+}(1,2m+1)$.
\item One can also derive $\hat y$ from a concrete adjoint transform of the minimal surface $x$. To ensure $\hat y $ to be immersed, one need some restrictions on the minimal surface $x$. Our examples here play an important role in the discussions of these conditions. We refer to \cite{MWW2} for more details.
\end{enumerate}\end{remark}

\begin{theorem}\label{thm-example-2} Let
\[\eta=\lambda^{-1}\eta_{-1}\dd z\  \hbox{ with }   \hat B_1=\frac{1}{2}\left(
                                                           \begin{array}{cccccc}
                                                           i &-1 &-i & 1 \\
                                                           i  &-1 & i & -1 \\
                                                           \end{array}
                                                         \right).\]
Then the associated family of the corresponding isotropic harmonic maps is $Y\wedge \hat Y$, with

\begin{equation}\label{eq-Y-ex-2}
  Y=  \frac{\sqrt{2}}{2\varsigma}
 \left(
    \begin{array}{c}
     (1+\frac{r^2}{2})^2\\
      -(1-\frac{r^2}{2})^2\\
i(\lambda^{-1}z-\lambda\bar{z})\\
  -\lambda^{-1}z-\lambda\bar{z}\\
-\frac{ir^2}{2}(\lambda^{-1}z-\lambda\bar{z})\\
      \frac{r^2}{2}(\lambda^{-1}z+\lambda\bar{z})\\
    \end{array}
  \right),\ \hat{Y}= \frac{\sqrt{2}}{2\varsigma}\left(
    \begin{array}{c}
     (1+\frac{r^2}{2})^2\\
       (1-\frac{r^2}{2})^2\\
   \frac{ir^2}{2}(\lambda^{-1}z-\lambda\bar{z})\\
     -\frac{r^2}{2}(\lambda^{-1}z+\lambda\bar{z})\\
    -i(\lambda^{-1}z-\lambda\bar{z})\\
  \lambda^{-1}z+\lambda\bar{z}\\
    \end{array}
  \right).
\end{equation}
Here $r=|z|$ and $\varsigma= 1-\frac{r^4}{4} $. Note that in this case $Y$ is  conformally equivalent to  $\hat Y$. Moreover, $Y$ and $\hat Y$ satisfy  the following results.
\begin{enumerate}
\item Set $Y=(y_0,\cdots,y_5)^t$. Then $y=\frac{1}{y_0}(y_1,\cdots,y_5)^t=[Y]$ is an H-totally isotropic, Willmore immersion from $S^2$ to $S^4$, with metric
$\langle y_{z},y_{\bar{z}}\rangle|\dd z|^2=\frac{2+\frac{r^4}{2}}{(1+\frac{r^2}{2})^4}|\dd z|^2.$
\item  $[Y]$ is conformally equivalent to the minimal surface $x$ in $\mathbb R^4$:
\[x= \left(
       \begin{array}{cccc}
     \frac{2i \lambda^{-1}}{\bar z}-\frac{2i\lambda }{z}  &
        -\frac{2 \lambda^{-1}}{\bar z}-\frac{2\lambda }{z} &  -i\lambda^{-1}z+i\lambda\bar{z}  &  \lambda^{-1}z+\lambda\bar{z} \\
       \end{array}
     \right)^t.
\]
\item  The harmonic map  $Y\wedge \hat Y$ has no definition on the curve $1-\frac{r^4}{4}=0$. But both $[Y]$ and $[\hat Y]$ are   Willmore immersions on the whole two-sphere $S^2$.
\end{enumerate}
\end{theorem}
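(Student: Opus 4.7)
The plan is to run the explicit DPW-Iwasawa procedure of Lemma \ref{lemma-gm-2} with the given $\hat B_1$ and identify the resulting projective frame with \eqref{eq-Y-ex-2}. With $m=2$, reading off \eqref{eq-H-iso-NP} gives $h_{11}=\hat h_{11}=\hat h_{21}=i/2$ and $h_{21}=-i/2$; the formulas in part~(1) of Lemma \ref{lemma-gm-2} then yield the constant anti-diagonal matrix $\check f = J_2$. Integrating produces $f=zJ_2$, and the identity $\check f^{\sharp}=\check f$ collapses $g=-\int_0^z f\check f^{\sharp}\,\dd z$ to $g=-\tfrac{z^2}{2}I_2$. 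This scalar-like structure is what makes the subsequent Iwasawa step essentially one-variable in $r^2=|z|^2$.

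The core step is to solve the system \eqref{eq-Iwasawa:1} from the bottom up: compute $\varrho = I + \bar f^{t\sharp}J f^{\sharp} + \bar g^t g$ explicitly (diagonal in $r^2$), extract $u^{\sharp}$ from \eqref{eq-Iwasawa:1E}, then $u$ from \eqref{eq-Iwasawa:1B}, $q$ from \eqref{eq-Iwasawa:1D}, $v$ from \eqref{eq-Iwasawa:1C}, and finally $a$ from \eqref{eq-Iwasawa:1A}. Inserting these into \eqref{eq-Iw} gives the extended frame $\tilde F\in\Lambda G(4,\mathbb C)$, and applying the inverse of the Lie-group isomorphism $\mathcal P$ of Lemma \ref{lemma-gm} converts it back to a frame $F\in\Lambda SO^+(1,5)$. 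By the convention of Section~2 the first two columns of $F$ equal $\tfrac{1}{\sqrt 2}(Y+\hat Y)$ and $\tfrac{1}{\sqrt 2}(-Y+\hat Y)$, so $Y$ and $\hat Y$ are recovered by simple linear combinations; I expect $\varsigma = 1 - r^4/4$ to emerge as the common normalization factor arising from the diagonal block $L=\mathrm{diag}(l_1,l_0,l_4)$ in \eqref{eq-Iw}, matching \eqref{eq-Y-ex-2}.

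For the remaining claims I proceed as follows. (i) The metric $\langle y_z,y_{\bar z}\rangle|\dd z|^2$ is obtained by direct differentiation of $y=(y_1,\ldots,y_5)^t/y_0$, using $y_0=(1+r^2/2)^2/(\sqrt 2\varsigma)$ and the canonical-lift identity $\langle Y_z,Y_{\bar z}\rangle=\tfrac12$; the factors of $\varsigma$ cancel and leave $(2+r^4/2)/(1+r^2/2)^4$. (ii) Minimality of $x$ is checked coordinatewise (each component is harmonic in $z$ and $\langle x_z,x_z\rangle=0$ by inspection), and the M\"obius equivalence $[Y]\simeq x$ is then established by stereographically projecting $y$ from the point $[(1,-1,0,0,0,0)]\in S^4$, performing a diagonal linear change of coordinates in $\mathbb R^4$, and comparing with the given formula for $x$. (iii) On the curve $r^2=2$ direct inspection of \eqref{eq-Y-ex-2} shows that the numerators of $Y$ and $\hat Y$ become proportional (the umbilic-limit phenomenon of Lemma \ref{umbiliclemma}), so $Y\wedge\hat Y$ degenerates and the harmonic map has no definition there. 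Nevertheless $\varsigma$ is a common scalar, so $[Y]$ and $[\hat Y]$ remain projectively well-defined (and coincide on $r^2=2$); a change of chart $w=1/z$ at $z=\infty$ together with a polynomial degree comparison shows that both extend to smooth Willmore immersions from $S^2=\mathbb C\cup\{\infty\}$ into $S^4$.

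The main obstacle is the Iwasawa step. Even though the anti-diagonal structure of $\check f$ forces $\varrho,q,a$ to be diagonal in $r^2$ and $u,v$ to be scalar multiples of $J_2$ and $I_2$ respectively, care is needed to keep track of the $\sharp$-operation and of the reality constraint $\tau(F)=F$ imposed by Lemma \ref{lemma-gm} so that the output frame lands in the non-compact real form $SO^+(1,5)$ rather than in its complexification. Once the frame is explicit, the remaining verifications reduce to routine differentiation and polynomial bookkeeping.
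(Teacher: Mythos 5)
Your proposal is correct and follows the paper's computation step for step: apply $\mathcal{P}$ and Lemma \ref{lemma-gm-2}(1) to obtain $\check f=J_2$, $f=zJ_2$, $g=-\tfrac{z^2}{2}I_2$; solve \eqref{eq-Iwasawa:1} for $\varrho$, $u^{\sharp}$, $u$, $q$; insert into the frame formula \eqref{eq-Iw}; and then read off $Y,\hat Y$ from the appropriate entries via \eqref{eq-YandY}. One small correction to your expectations: since $q=I_2$ forces $l_0=I_2$, the factor $\varsigma=1-\tfrac{r^4}{4}$ in \eqref{eq-Y-ex-2} does \emph{not} come from the block $L$, but rather from $\varrho^{-1}$ (with $\det\varrho=\varsigma^2$) entering $u$ through \eqref{eq-Iwasawa:1E}; moreover $v$, $a$, $l_1$, $l_4$ need not be computed at all, because $Y$ and $\hat Y$ are extracted solely from the middle column block of \eqref{eq-Iw}, which involves only $f$, $g$, $u$ and $l_0$.
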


\begin{remark}Set $\lambda=1$. On the curve $\Gamma:\ 1-\frac{r^4}{4}=0$ we have $z= \sqrt{2}e^{i\theta}$ and hence
\[\begin{split}(1-\frac{r^4}{4})Y|_{\Gamma}&=\left(
                       \begin{array}{cccccc}
                         2\sqrt{2} & 0 & -2\sin\theta & -2\cos\theta & 2\sin\theta & 2\cos\theta  \\
                       \end{array}
                     \right),\\
                     (1-\frac{r^4}{4})\hat Y|_{\Gamma}&=\left(
                       \begin{array}{cccccc}
                         2\sqrt{2} & 0 & -2\sin\theta & -2\cos\theta & 2\sin\theta & 2\cos\theta  \\
                       \end{array}
                     \right).
                     \end{split}
\]
So $Y\wedge \hat Y$ has no definition on the curve $\Gamma$.
\end{remark}
\section{Appendix: Iwasawa decompositions and computations of examples }
This section contains two parts: the proof of the technical lemmas in Section 5.1 and the computations of the examples in Section 5.2.
\subsection{On the technical lemmas of Section 5.1}

To begin with, it is convenient to have the explicit expressions of $\mathcal{P}(A)$ in \eqref{eq-Lie}. So we will first give this expression and then provide the proofs of  Lemma \ref{lemma-gm-2} and  Lemma \ref{lemma-gm-3}.

\subsubsection{On $\mathcal{P}(A)$}
Set
\[A=(\mathbf{a}_{ij}),~ \mathcal{P}(A)=B=(\mathbf{b}_{ij}), ~ \hat j=2m+3- j ~ \hbox{ and } \hat k=2m+3- k.\]
Then when $ j=1,\cdots, m,$ we have
\begin{equation}\label{eq-T1}
\mathbf{b}_{jk}=\left\{\begin{split}
&\frac{\mathbf{a}_{2j+1,2k+1}-i\mathbf{a}_{2j+2,2k+1}+i\mathbf{a}_{2j+1,2k+2}+\mathbf{a}_{2j+2,2k+2}}{2},~ k=1,\cdots, m;\\
&\frac{i\mathbf{a}_{2j+1,1}+\mathbf{a}_{2j+2,1}+i\mathbf{a}_{2j+1,2}+\mathbf{a}_{2j+2,2}}{2},~~ k=m+1;\\
&\frac{-i\mathbf{a}_{2j+1,1}-\mathbf{a}_{2j+2,1}+i\mathbf{a}_{2j+1,2}+\mathbf{a}_{2j+2,2}}{2},~~ k=m+2;\\
&\frac{-\mathbf{a}_{2j+1,2\hat{k}+1}+i\mathbf{a}_{2j+2,2\hat{k}+1}+i\mathbf{a}_{2 j+1,2\hat{k}+2}+\mathbf{a}_{2j+2,2\hat{k}+2}}{2},~ k=m+3,\cdots, 2m+2.\\
\end{split}\right.
\end{equation}
When $ j=m+1$ we have
\begin{equation}\label{eq-T2}
\mathbf{b}_{jk}=\left\{\begin{split}
&\frac{-i\mathbf{a}_{1,2k+1}-i\mathbf{a}_{2,2k+1}+\mathbf{a}_{1,2k+2}+\mathbf{a}_{2,2k+2}}{2},~~k=1,\cdots, m;\\
&\frac{\mathbf{a}_{11}+\mathbf{a}_{21}+\mathbf{a}_{12}+\mathbf{a}_{22}}{2},~~ k=m+1;\\
&\frac{-\mathbf{a}_{11}-\mathbf{a}_{21}+\mathbf{a}_{12}+\mathbf{a}_{22}}{2},~~ k=m+2;\\
&\frac{i\mathbf{a}_{1,2\hat k+1}+i\mathbf{a}_{2,2\hat k+1}+\mathbf{a}_{1,2\hat k+2}+\mathbf{a}_{2,2\hat k+2}}{2},~~k=m+3,\cdots, 2m+2.\\
\end{split}\right.
\end{equation}
When $ j=m+2$ we have
\begin{equation}\label{eq-T3}
\mathbf{b}_{jk}=\left\{\begin{split}
&\frac{i\mathbf{a}_{1,2k+1}-i\mathbf{a}_{2,2k+1}-\mathbf{a}_{1,2k+2}+\mathbf{a}_{2,2k+2}}{2},~~k=1,\cdots, m;\\
&\frac{-\mathbf{a}_{11}+\mathbf{a}_{21}-\mathbf{a}_{12}+\mathbf{a}_{22}}{2},~~ k=m+1;\\
&\frac{\mathbf{a}_{11}-\mathbf{a}_{21}-\mathbf{a}_{12}+\mathbf{a}_{22}}{2},~~ k=m+2;\\
&\frac{-i\mathbf{a}_{1,2\hat k+1}+i\mathbf{a}_{2,2\hat k+1}-\mathbf{a}_{1,2\hat k+2}+\mathbf{a}_{2,2\hat k+2}}{2},~~k=m+3,\cdots, 2m+2.\\
\end{split}\right.
\end{equation}
When $ j=m+3,\cdots, 2m+2$, we have
\begin{equation}\label{eq-T4}
\mathbf{b}_{jk}=\left\{\begin{split}
&\frac{-\mathbf{a}_{2\hat j+1,2k+1}-i\mathbf{a}_{2\hat j+2,2k+1}-i\mathbf{a}_{2\hat j+1,2k+2}+\mathbf{a}_{2\hat j+2,2k+2}}{2},~ k=1,\cdots, m;\\
&\frac{-i\mathbf{a}_{2\hat j+1,1}+\mathbf{a}_{2\hat j+2,1}-i\mathbf{a}_{2\hat j+1,2}+\mathbf{a}_{2\hat j+2,2}}{2},~~ k=m+1;\\
&\frac{i\mathbf{a}_{2\hat j+1,1}-\mathbf{a}_{2\hat j+2,1}-i\mathbf{a}_{2\hat j+1,2}+\mathbf{a}_{2\hat j+2,2}}{2},~~ k=m+2;\\
&\frac{\mathbf{a}_{2\hat{j}+1,2\hat{k}+1}+i\mathbf{a}_{2\hat{j}+2,2\hat{k}+1}-i\mathbf{a}_{2\hat{j}+1,2\hat{k}+2}
+\mathbf{a}_{2\hat{j}+2,2\hat{k}+2}}{2},~ k=m+3,\cdots, 2m+2.\\
\end{split}\right.
\end{equation}

\subsubsection{Proof of Lemma \ref{lemma-gm-2}}\

(1).
Assume that $\eta_{-1}=(\mathrm{a}_{jk})$. Then one has
\[\mathbf{a}_{1,2j+1}=\mathbf{a}_{2j+1,1}=h_{j1},~\mathbf{a}_{1,2j+2}=\mathbf{a}_{2j+2,1}=ih_{j1},~~\]
\[\mathbf{a}_{2,2j+1}=-\mathbf{a}_{2j+1,2}=\hat h_{j1},~\mathbf{a}_{2,2j+2}=-\mathbf{a}_{2j+2,2}=i\hat h_{j1},\]
when $j=1,\cdots, m+1$, and $\mathbf{a}_{jk}=0$ otherwise. Substituting into \eqref{eq-T1}--\eqref{eq-T4}, one obtains \eqref{eq-P-eta}.

(2). First by definition, $H(0,0,\lambda)=I$. Next,
\[H_z=\lambda^{-1}H_{1z}+\lambda^{-2}H_{2z}=\left(
        \begin{array}{ccc}
          0 & \lambda^{-1} \check f & -\lambda^{-2} f{\check{f}^{\sharp}} \\
          0 & 0 & -\lambda^{-1}\check f ^{\sharp} \\
          0 & 0 & 0 \\
        \end{array}
      \right)=H \mathcal{P}(\eta_{-1}).
\]

(3).  First since $H(0,0,\lambda)=I$, when $|z|<\varepsilon$, there exists an Iwasawa Decomposition $H=\tilde F V_+$, with $\tilde F\in\Lambda G_{\sigma}$, $V_+\in\Lambda^+ G_{\sigma}^{\C}$.
Next we want to express $\tilde F$ in terms of $H$.
Since $H=I+\lambda^{-1}H_1+\lambda^{-2}H_2$, by the reality condition we see that $V_+=V_0+\lambda V_1+\lambda^2 V_2$ with $V_0$, $V_1$ and $V_2$ independent of $\lambda$.

Assume that $V_+=V_0\hat V_+$ such that $\hat V_+|_{\lambda=0}=I$. Then we have
\[\tilde F=H\hat  V_+^{-1}V_0^{-1}.\]
Since $\tau(\tilde F)=\tilde F$, we obtain $\tau(H)\tau(\hat V_+^{-1})\tau(V_0^{-1})=H\hat  V_+^{-1}V_0^{-1}$, i.e.,
\[\tau(H)^{-1}H=\tau(\hat V_+^{-1})\tau(V_0^{-1})V_0\hat  V_+.\]
We then assume that
\[\tau(H)^{-1}H=WW_0\tau(W)^{-1}\]
with $W_0=\tau(V_0^{-1})V_0$, $W=I+\lambda^{-1}W_1+\lambda^{-2}W_2=\tau(\hat V_+^{-1})$ and
\[W_0=\left(
        \begin{array}{ccc}
          a & 0 & b \\
          0 & q & 0 \\
          c & 0 & \varrho \\
        \end{array}
      \right),\  W_1=\left(
        \begin{array}{ccc}
          0 & u & 0 \\
         -u_0^{\sharp} & 0 & -u^{\sharp} \\
          0 &  u_0 & 0 \\
        \end{array}
      \right).\]
Comparing the coefficients of $\lambda$, we obtain
\begin{equation}\label{eq-compare}\left\{
\begin{split}
&W_2W_0=H_2,\\
&W_1W_0+W_2W_0\tau(W_1)=H_1+\tau(H_1)H_2,\\
&W_0+W_1W_0\tau(W_1)+W_2W_0\tau(W_2)=I+\tau(H_1)H_1+\tau(H_2)H_2.
\end{split}
\right.
\end{equation}
  Direct computation shows that
\[\tau(H)^{-1}=\left(
        \begin{array}{ccc}
          I & 0& 0\\
          \lambda J\bar{f}^t  & I & 0 \\
          \lambda^{2}J\bar{g}^t  & -\lambda \bar{f}^{\sharp,t}J & I \\
        \end{array}
      \right),\]
and
\begin{equation}\label{eq-H}
\tau(H)^{-1}H=\left(
        \begin{array}{ccc}
         I & \lambda^{-1}f & \lambda^{-2}g \\
          \lambda J\bar{f}^t  & I+J\bar{f}^tf & \lambda^{-1}(J\bar{f}^t-f^{\sharp}) \\
          \lambda^{2}J\bar{g}^t  & -\lambda( \bar{f}^{\sharp,t}J-\bar{g}^tf) & I+\bar{f}^{\sharp,t}Jf^{\sharp}+\bar{g}^tg \\
        \end{array}
      \right).
\end{equation}
From the first two equations of \eqref{eq-compare} we can see that
\[\begin{split}W_1W_0&=H_1+\tau(H_1)H_2-W_2W_0\tau(W_1)=H_1+\tau(H_1)H_2-H_2\tau(W_1)\\
&=\left(
    \begin{array}{ccc}
      0 & \cdots &  0\\
      0 & 0 &  \cdots \\
      0 & 0 & 0 \\
    \end{array}
  \right)+\left(
    \begin{array}{ccc}
      0 & 0 &  0\\
      0 & 0 &  \cdots \\
      0 & 0 & 0 \\
    \end{array}
  \right)-\left(
    \begin{array}{ccc}
      0 & \cdots &  0\\
      0 & 0 & 0 \\
      0 & 0 & 0 \\
    \end{array}
  \right).
\\
\end{split}\]
So
\[W_1=\left(
    \begin{array}{ccc}
      0 & \cdots &  0\\
      \cdots & 0 &  \cdots \\
      0 & 0 & 0 \\
    \end{array}
  \right), \hbox{ i.e., $u_0=0$. }\]
Then from the last equation of \eqref{eq-compare} we see that
\[\begin{split}W_0&=\left(I+\tau(H_1)H_1+\tau(H_2)H_2\right)-W_1W_0\tau(W_1)-W_2W_0\tau(W_2)\\
&=\left(
    \begin{array}{ccc}
     \cdots  &0 &  0\\
      0 &  \cdots & 0\\
      0 & 0 & \cdots \\
    \end{array}
  \right)-\left(
    \begin{array}{ccc}
      \cdots  & 0 &  0\\
      0 & \cdots  & 0 \\
      0 & 0 & 0 \\
    \end{array}
  \right)-\left(
    \begin{array}{ccc}
      \cdots  & 0 &  0\\
      0 & 0 & 0 \\
      0 & 0 & 0 \\
    \end{array}
  \right),
\\
\end{split}\]
i.e., $b=c=0$. So we have that
\[W_0=\left(
        \begin{array}{ccc}
          a & 0 & 0\\
          0 & q & 0 \\
          0 & 0 & \varrho \\
        \end{array}
      \right),\  W=\left(
        \begin{array}{ccc}
          I & \lambda^{-1}u & \lambda^{-2}v \\
         0 & I & -\lambda^{-1}u^{\sharp} \\
          0 &  0 &  I  \\
        \end{array}
      \right) \hbox{ and }\tau(W)^{-1}=\left(
        \begin{array}{ccc}
          I & 0& 0\\
          \lambda J\bar{u}^t  & I & 0 \\
          \lambda^{2}J\bar{v}^t  & -\lambda \bar{u}^{\sharp,t}J & I \\
        \end{array}
      \right).\]
So            \begin{equation}\label{eq-W}
 WW_0\tau(W)^{-1}=\left(
        \begin{array}{ccc}
         a+uqJ\bar{u}^t+v \varrho\bar{v}^t & \lambda^{-1}(uq-v\varrho\bar{u}^{\sharp,t}J) & \lambda^{-2}v\varrho \\
          \lambda (qJ\bar{u}^t-u^{\sharp}\varrho\bar{v}^t)  & q+u^{\sharp}\varrho\bar{u}^{\sharp,t}J & -\lambda^{-1}u^{\sharp}\varrho \\
          \lambda^{2}\varrho\bar{v}^t  & -\lambda \varrho \bar{u}^{\sharp,t} & \varrho \\
        \end{array}
      \right).
      \end{equation}
By \eqref{eq-H}, \eqref{eq-W} and $\tau(H)^{-1}H=WW_0\tau(W)^{-1}$ we obtain
\eqref{eq-Iwasawa:1}.

Apparently $W_0$ has the decomposition \[W_0=\tau(L)^{-1}L,\ \hbox{ with } L=\hbox{diag}\{l_1,l_0,l_4\}.\]
Now set $\tilde{F}=H\tau(W)L^{-1}$. We see that $\tau(\tilde F)=\tilde F$ and
$H=\tilde{F} L \tau(W)^{-1}$ is an Iwasawa decomposition of $H$.
Substituting
\[\tau(W)=\left(
        \begin{array}{ccc}
          I & 0& 0\\
          -\lambda \bar{u}^{\sharp}J  & I & 0 \\
          \lambda^{2}J\bar{v}J  & \lambda J\bar{u} & I \\
        \end{array}
      \right),\]
$L^{-1}$      and $H$ into  $\tilde{F}=H\tau(W)L^{-1}$, one obtains \eqref{eq-Iw}.

(4).  Since
\[\tau(W)= I+\lambda \tau (W_1)+\lambda^2\tau (W_2),\ \tau(W)^{-1}=I-\lambda \tau(W_1)+\lambda^2(\cdots).\]
So
\[\tilde{\alpha}_1'=L\mathcal{P}(\eta_{-1})L^{-1}\dd z,~~~ \tilde{\alpha}_0¡¯=\left(-\tau(W_1)\mathcal{P}(\eta_{-1})+\mathcal{P}(\eta_{-1})\tau(W_1)-L_zL^{-1}\right)\dd z.\]
Then \eqref{eq-alpha-1} and \eqref{eq-alpha-2} follow. This finishes the proof of Lemma  \ref{lemma-gm-2}.

\subsubsection{Proof of Lemma \ref{lemma-gm-3}}
Assume that \[\hbox{$\tilde{F}=\mathcal{P}(F)$ and $\alpha'=F^{-1}F_z\dd z=(\mathbf{b}_{jk})\dd z$.}\] So we have $\mathcal{P}(\alpha')= \tilde{\alpha}_1'+\tilde{\alpha}_0'$. Applying \eqref{eq-T1}--\eqref{eq-T4}, \eqref{eq-alpha-1} and \eqref{eq-alpha-2}, we obtain that
\[ \begin{split}
&\frac{-\mathbf{b}_{2j+1,2\hat{k}+1}+i\mathbf{b}_{2j+2,2\hat{k}+1}+i\mathbf{b}_{2 j+1,2\hat{k}+2}+\mathbf{b}_{2j+2,2\hat{k}+2}}{2}=0,~ 1\leq j\leq m,~ m+3\leq k\leq2m+2;\\
&\frac{-i\mathbf{b}_{1,2k+1}-i\mathbf{b}_{2,2k+1}+\mathbf{b}_{1,2k+2}+\mathbf{b}_{2,2k+2}}{2}=0,~~ 1\leq k\leq m;\\
&\frac{i\mathbf{b}_{1,2k+1}-i\mathbf{b}_{2,2k+1}-\mathbf{b}_{1,2k+2}+\mathbf{b}_{2,2k+2}}{2}=0,~~ 1\leq k\leq m;\\
&\frac{-\mathbf{b}_{2\hat j+1,2k+1}-i\mathbf{b}_{2\hat j+2,2k+1}-i\mathbf{b}_{2\hat j+1,2k+2}+\mathbf{b}_{2\hat j+2,2k+2}}{2}=0,~~ m+3\leq j\leq2m+2, 1\leq k\leq m.\\
\end{split}
\]
Here $\hat j=2m+3-j$, $\hat k=2m+3-k$.  Since $\mathbf{b}_{jk}=-\mathbf{b}_{kj}$ for $j,k>1$, and $\mathbf{b}_{1k}=\mathbf{b}_{k1}$ for $k>1$, we have
\[\left\{\begin{split}
&\mathbf{b}_{j,2k+2}=i\mathbf{b}_{j,2k+1},~ j=1,2,~~1\leq k\leq m;\\
&\mathbf{b}_{2j+1,2k+1}=\mathbf{b}_{2j+2,2 k+2},~~1\leq j, k\leq m;\\
&\mathbf{b}_{2j+1,2k+2}=-\mathbf{b}_{2j+2,2 k+1},~~1\leq j, k\leq m.
\end{split}\right.\]
Set \[F=(e_0,\hat e_0, \psi_1,\hat\psi_1, \cdots,\psi_m,\hat\psi_m) ,~~Y=\frac{\sqrt{2}}{2}(e_0-\hat e_0)~ \hbox{ and }~\hat Y=\frac{\sqrt{2}}{2}(e_0+\hat e_0).\] We have then
\[\left\{\begin{split}
&e_{0z}=\mathbf{b}_{21}\hat e_0+\sum_{1\leq j\leq m}\mathbf{b}_{1,2j+1}(\psi_j+i\hat\psi_j),\\
&\hat e_{0z}=\mathbf{b}_{21} e_0-\sum_{1\leq j\leq m}\mathbf{b}_{2,2j+1}(\psi_j+i\hat\psi_j),\\
&\psi_{jz}=\mathbf{b}_{1,2j+1}e_0+\mathbf{b}_{2,2j+1}\hat e_0-\sum_{1\leq k\leq m}\left(\mathbf{b}_{2j+1,2k+1}\psi_k+\mathbf{b}_{2j+1,2k+2}\hat\psi_k\right),\\
&\hat\psi_{jz}=\mathbf{b}_{1,2j+2}e_0+\mathbf{b}_{2,2j+2}\hat e_0-\sum_{1\leq k\leq m}\left(\mathbf{b}_{2j+2,2k+1}\psi_k+\mathbf{b}_{2j+2,2k+2}\hat\psi_k\right).\\
\end{split}\right.\]
So
\[(\psi_j+i\hat\psi_j)_{z}=-\sum_{1\leq k\leq m}\left(\mathbf{b}_{2j+1,2k+1}+i\mathbf{b}_{2j+2,2k+1}\right)\left(\psi_k+i\hat\psi_k\right)\mod \{Y,\hat Y\}.\]
\[(\psi_j+i\hat\psi_j)_{\bar{z}}=-\sum_{1\leq k\leq m}\left(\overline{\mathbf{b}_{2j+1,2k+1}}+i\overline{\mathbf{b}_{2j+2,2k+1}}\right)\left(\psi_k+i\hat\psi_k\right)\mod \{Y,\hat Y\}.\]
Since
\[Y_z=-\mathbf{b}_{21}Y+\frac{\sqrt{2}}{2}\sum_j(\mathbf{b}_{1,2j+1}+\mathbf{b}_{2,2j+1})(\psi_j+i\hat\psi_j)\]
and
\[\hat Y_z= \mathbf{b}_{21}\hat Y+\frac{\sqrt{2}}{2}\sum_j(\mathbf{b}_{1,2j+1}-\mathbf{b}_{2,2j+1})(\psi_j+i\hat\psi_j),\]
it is straightforward to verify that $Y$ and $\hat Y$ satisfy \eqref{eq-HTI-0} and \eqref{eq-HTI}. This finishes the proof of Lemma  \ref{lemma-gm-3}.

\subsection{Computations on the examples}

This subsection is to derive the examples stated in Section 5.  To begin with, first we recall the formula of expressing $Y$ and $\hat Y$ by elements of $H$. Then we will apply the formula to derive the examples.

     \subsubsection{From frame to Willmore surfaces}

Suppose that $F=(e_0,\hat{e}_0,\psi_1,\hat\psi_1,\cdots,\psi_m,\hat\psi_m)$,  and
\[Y=\frac{\sqrt{2}}{2}(e_0-\hat{e}_0),\ \hat{Y}=\frac{\sqrt{2}}{2}(e_0+\hat{e}_0)\]
Then $y=[Y]$ and $\hat y=[\hat Y]$are the two Willmore surfaces (which may have branched points) adjoint to each other.

Assume that  $F=(\mathbf{c}_{jk})$, $\tilde F=(\widetilde{\mathbf{c}}_{jk})$. Then by \eqref{eq-T1}--\eqref{eq-T4}, we have that
\[\left\{\begin{split}&\widetilde{\mathbf{c}}_{j,m+1}+\widetilde{\mathbf{c}}_{\hat{j},m+1}=\mathbf{c}_{2j+2,1}+\mathbf{c}_{2j+2,2},\\
& -i(\widetilde{\mathbf{c}}_{j,m+1}-\widetilde{\mathbf{c}}_{\hat{j},m+1})=\mathbf{c}_{2j+1,1}+\mathbf{c}_{2j+1,2},\\
 &\widetilde{\mathbf{c}}_{m+1,m+1}+\widetilde{\mathbf{c}}_{m+2,m+1}=\mathbf{c}_{21}+\mathbf{c}_{22},\\
&\widetilde{\mathbf{c}}_{m+1,m+1}-\widetilde{\mathbf{c}}_{m+2,m+1}=\mathbf{c}_{11}+\mathbf{c}_{12},\end{split}\right.\
\left\{\begin{split}&\widetilde{\mathbf{c}}_{j,m+2}+\widetilde{\mathbf{c}}_{\hat{j},m+2}=-\mathbf{c}_{2j+2,1}+\mathbf{c}_{2j+2,2},\\
&-i(\widetilde{\mathbf{c}}_{j,m+2}-\widetilde{\mathbf{c}}_{\hat{j},m+2})=-\mathbf{c}_{2j+1,1}+\mathbf{c}_{2j+1,2},\\
&\widetilde{\mathbf{c}}_{m+1,m+2}+\widetilde{\mathbf{c}}_{m+2,m+2}=-\mathbf{c}_{21}+\mathbf{c}_{22},\\
 &\widetilde{\mathbf{c}}_{m+1,m+2}-\widetilde{\mathbf{c}}_{m+2,m+2}=-\mathbf{c}_{11}+\mathbf{c}_{12}.
\end{split}\right.\]
So we have
                      \begin{equation}\label{eq-YandY}
                         Y=-\frac{\sqrt{2}}{2}\left(
                        \begin{array}{c}
                          \widetilde{\mathbf{c}}_{m+1,m+2}-\widetilde{\mathbf{c}}_{m+2,m+2} \\
                          \widetilde{\mathbf{c}}_{m+1,m+2}+\widetilde{\mathbf{c}}_{m+2,m+2} \\
                         -i(\widetilde{\mathbf{c}}_{1,m+2}-\widetilde{\mathbf{c}}_{2m+2,m+2})\\
                          \widetilde{\mathbf{c}}_{1,m+2}+\widetilde{\mathbf{c}}_{2m+2,m+2} \\
                          \cdots \\
                         -i(\widetilde{\mathbf{c}}_{m,m+2}-\widetilde{\mathbf{c}}_{m+3,m+2})\\
                          \widetilde{\mathbf{c}}_{m,m+2}+\widetilde{\mathbf{c}}_{m+3,m+2} \\
                        \end{array}
                      \right),\ \hat Y= \frac{\sqrt{2}}{2}\left(
                        \begin{array}{c}
                          \widetilde{\mathbf{c}}_{m+1,m+1}-\widetilde{\mathbf{c}}_{m+2,m+1} \\
                          \widetilde{\mathbf{c}}_{m+1,m+1}+\widetilde{\mathbf{c}}_{m+2,m+1}\\
                         -i(\widetilde{\mathbf{c}}_{1,m+1}-\widetilde{\mathbf{c}}_{2m+2,m+1})\\
                          \widetilde{\mathbf{c}}_{1,m+21}+\widetilde{\mathbf{c}}_{2m+2,m+1} \\
                          \cdots \\
                         -i(\widetilde{\mathbf{c}}_{m,m+1}-\widetilde{\mathbf{c}}_{m+3,m+1})\\
                          \widetilde{\mathbf{c}}_{m,m+1}+\widetilde{\mathbf{c}}_{m+3,m+1} \\
                        \end{array}
                      \right).
                      \end{equation}

\subsubsection{Proof of Theorem \ref{thm-example-1}}
Set $r=|z|$. By \eqref{eq-P-eta} we have
 \[\check{f}=\left(
      \begin{array}{cc}
        1 & 0 \\
        0 & 1 \\
        2z & 0 \\
      \end{array}
    \right).\]
By integration one has
\[ f=\left(
      \begin{array}{cc}
        z & 0 \\
        0 & z \\
        z^2 & 0 \\
      \end{array}
    \right)~~\hbox{ and }~~
g=-\left(
                             \begin{array}{ccc}
                               0 & \frac{z^2}{2}  &0 \\
                               \frac{2z^3}{3} & 0&  \frac{z^2}{2}  \\
                              0 &  \frac{z^3}{3}&0  \\
                             \end{array}
                           \right).\]
Substituting into \eqref{eq-Iwasawa:1F}, one obtains
\[\varrho=\left(
                                   \begin{array}{ccc}
                                    1+ \frac{4r^6}{9} &r^2\bar{z} & \frac{r^4\bar{z}}{3} \\
                                    r^2z & 1+\frac{r^4}{4}+\frac{r^6}{9} & r^2 \\
                                     \frac{r^4z}{3} & r^2 & 1+\frac{r^4}{4}\\
                                   \end{array}
                                 \right),
\]
with
\[\varrho^{-1}=\frac{1}{|\varrho|}\left(
                                   \begin{array}{ccc}
                                    (1-\frac{r^4}{4})^2+ \frac{4r^6}{9}(1+\frac{r^4}{4}) & -\bar{z}r^2(1-\frac{r^4}{12}) & \frac{2r^4\bar{z}}{3}(1-\frac{r^4}{8}-\frac{r^6}{18}) \\
                                    -zr^2(1-\frac{r^4}{12}) & 1+\frac{r^4}{4}+\frac{4r^6}{9} & -r^2(1+\frac{r^6}{9}) \\
                                     \frac{2r^4z}{3}(1-\frac{r^4}{8}-\frac{r^6}{18}) &  -r^2(1+\frac{r^6}{9}) &  (1-\frac{2r^6}{9})^2+\frac{r^4}{4}(1+\frac{4r^6}{9})\\
                                   \end{array}
                                 \right).\]
Here $|\varrho|=\varsigma^2$ and $\varsigma=1-\frac{r^4}{4}-\frac{2r^6}{9}$.  Then by \eqref{eq-Iwasawa:1E} one obtains
                         \[\begin{split}u^{\sharp}& =(f^{\sharp}-Jf^tg)\varrho^{-1}\\
&=\frac{z}{\varsigma }\left(
                                                                \begin{array}{ccc}
                                                                 -\frac{r^2z}{3}  & 1 & -\frac{r^2}{2}\\
                                                                 z(1-\frac{r^4}{12})  & -\frac{r^2}{2}-\frac{2r^4}{3} & 1+\frac{r^6}{9}\\
                                                                \end{array}
                                                              \right).\\
                              \end{split}\]
 So
       \[u=\frac{z}{\varsigma}\left(
                                                                \begin{array}{ccc}
                                                                 1+\frac{r^6}{9}  & -\frac{r^2}{2}\\
                                                                  -\frac{r^2}{2}-\frac{2r^4}{3}  &  1\\
                                                                 z(1-\frac{r^4}{12})  & -\frac{r^2z}{3}  \\
                                                                \end{array}
                                                              \right).\]
Substituting $f$, $g$ and $u$ into \eqref{eq-Iwasawa:1D}, one obtains
\[q=I_2.\]
So $l_0=I_2$.
By \eqref{eq-Iw}, since
\[   \begin{split} J\bar{u}&=\frac{\bar{z}}{\varsigma}\left(
                                                                \begin{array}{ccc}
                                                                 \bar{z}(1-\frac{r^4}{12})  & -\frac{r^2\bar{z}}{3}  \\
                                                                  -\frac{r^2}{2}-\frac{2r^4}{3}  &  1\\
                                                                 1+\frac{r^6}{9}  & -\frac{r^2}{2}\\
                                                                \end{array}
                                                              \right),\\
 f+gJ\bar{u}&=\frac{z}{\varsigma}\left(
                                                                \begin{array}{ccc}
                                                                 1+\frac{r^6}{9}  & -\frac{r^2}{2}  \\
                                                                  -\frac{r^2}{2}(1+\frac{4r^2}{3})  &  1\\
                                                                 z(1-\frac{r^4}{12})  & -\frac{r^2z}{3}\\
                                                                \end{array}
                                                              \right),\\
I-f^{\sharp}J\bar{u}&=\frac{1}{\varsigma}\left(
                                                                \begin{array}{cc}
                                                                 1+\frac{r^4}{4}+\frac{4r^6}{9}  & -r^2  \\
                                                                  -r^2(1+r^2+\frac{r^6}{36})  &  1+\frac{r^4}{4}+\frac{r^6}{9}\\
                                                                \end{array}
                                                              \right),
                                                              \end{split}\]
we have
\[\left(
    \begin{array}{cc}
      \widetilde{\mathbf{c}}_{14} & \widetilde{\mathbf{c}}_{15} \\
      \widetilde{\mathbf{c}}_{24} & \widetilde{\mathbf{c}}_{25} \\
      \widetilde{\mathbf{c}}_{34} & \widetilde{\mathbf{c}}_{35} \\
      \widetilde{\mathbf{c}}_{44} & \widetilde{\mathbf{c}}_{45} \\
      \widetilde{\mathbf{c}}_{54} & \widetilde{\mathbf{c}}_{55} \\
      \widetilde{\mathbf{c}}_{64} & \widetilde{\mathbf{c}}_{65} \\
      \widetilde{\mathbf{c}}_{74} & \widetilde{\mathbf{c}}_{75} \\
      \widetilde{\mathbf{c}}_{84} & \widetilde{\mathbf{c}}_{85} \\
    \end{array}
  \right)=\frac{1}{\varsigma}\left(
    \begin{array}{cc}
    \lambda^{-1}z(1+\frac{r^6}{9})  & -\lambda^{-1}\frac{zr^2}{2}  \\
     -\lambda^{-1}\frac{r^2z}{2}(1+\frac{4r^2}{3})  & \lambda^{-1}z \\
      \lambda^{-1}z^2(1-\frac{r^4}{12})  & -\lambda^{-1}\frac{r^2z^2}{3}\\
       1+\frac{r^4}{4}+\frac{4r^6}{9}  & -r^2  \\
        -r^2(1+r^2+\frac{r^6}{36})  &  1+\frac{r^4}{4}+\frac{r^6}{9}\\
         \lambda\bar{z}^2(1-\frac{r^4}{12})  & -\lambda\frac{r^2\bar{z}^2}{3}  \\
          -\lambda\frac{r^2\bar{z}}{2}(1+\frac{4r^2}{3})  &  \lambda\bar{z}\\
           \lambda\bar{z}(1+\frac{r^6}{9})  & - \lambda\frac{r^2\bar{z}}{2}\\
    \end{array}
  \right).
\]
Substituting $ \widetilde{\mathbf{c}}_{jk}$ into \eqref{eq-YandY}, one derives \eqref{eq-Y-ex-1}.

The rest are straightforward computations, except $\hat y$ being branched at $z=\infty$ and $\hat y$ being unbranched at $z=\infty$.
To this end, we need to use another coordinate.
Set $\tilde{z}=\frac{1}{z}$ and $\tilde{r}=\sqrt{|\tilde{z}|}$, we have that
\[|y_{\tilde{z}}|^2|\dd\tilde{z}|^2=\frac{2\tilde{r}^2(\tilde{r}^6+\frac{\tilde{r}^2}{4}+\frac{4}{9})}{\left(\tilde{r}^6+\tilde{r}^4+\frac{\tilde{r}^2}{4}+\frac{1}{9}\right)^2}|\dd\tilde{z}|^2,
\]
\[|\hat y_{\tilde{z}}|^2|\dd\tilde{z}|^2=
\frac{2\tilde{r}^{12}+8\tilde{r}^{10}+\frac{\tilde{r}^8}{2}+\frac{4\tilde{r}^6}{9}+\frac{8\tilde{r}^4}{9}+\frac{\tilde{r}^{2}}{18} +\frac{2}{81}}{\left(\tilde{r}^8+\tilde{r}^6+\frac{5\tilde{r}^4}{4}+\frac{4\tilde{r}^2}{9}+\frac{1}{36}\right)^2}|\dd\tilde{z}|^2.\]
At $\tilde{z}=0$, $y_{\tilde{z}}|^2|\dd\tilde{z}|^2=0$ and $|\hat y_{\tilde{z}}|^2|\dd\tilde{z}|^2=32|\dd\tilde{z}|^2.$ This finishes the proof.

\subsubsection{Proof of Theorem \ref{thm-example-2}}
Set $r=|z|$. By \eqref{eq-P-eta} we have
\[\check f=\left(
      \begin{array}{cc}
        0 & 1 \\
        1 & 0 \\
      \end{array}
    \right)~~ \Rightarrow ~~f=\left(
      \begin{array}{cc}
        0 & z \\
        z & 0 \\
      \end{array}
    \right), \ g=-\frac{z^2}{2}\left(
                             \begin{array}{ccc}
                                1 & 0 \\
                                 0& 1  \\
                               \end{array}
                           \right).
\]
Substituting into \eqref{eq-Iwasawa:1F}, one obtains
\[\varrho=\left(
      \begin{array}{ccc}
        1+\frac{r^4}{4}& r^2\\
         r^2 & 1+\frac{r^4}{4}\\
      \end{array}
    \right)\ \hbox{ and }\
\ \varrho^{-1}=\frac{1}{|\varrho|}\left(
      \begin{array}{ccc}
       1+\frac{r^4}{4}& -r^2\\
        -r^2 & 1+\frac{r^4}{4}\\
      \end{array}
    \right).\]
    Here $|\varrho|=\varsigma^2$ with $\varsigma=  1-\frac{r^4}{4}$. Next one computes
\[u^{\sharp}=\frac{z}{\varsigma}\left(
                         \begin{array}{ccc}
                            -\frac{r^2}{2} & 1 \\
                          1 & -\frac{r^2}{2}\\
                         \end{array}
                       \right), \ u=\frac{z}{\varsigma}\left(
      \begin{array}{cc}
        -\frac{r^2}{2} & 1 \\
        1 &  -\frac{r^2}{2} \\
      \end{array}
    \right)~~
\hbox{ and } ~~q=I_2.\]
So $l_0=I_2$. Then we have
\[ \lambda J \bar{u}l_0^{-1}=\frac{ \lambda\bar{z}}{\varsigma}\left(
      \begin{array}{cc}
      1&  -\frac{r^2}{2}  \\
  -\frac{r^2}{2} &1 \\
      \end{array}
    \right),~~
    \lambda^{-1}(f+gJ\bar{u})l_0^{-1}=\frac{\lambda^{-1}z}{\varsigma} \left(
      \begin{array}{cc}
        -\frac{r^2}{2} & 1\\
        1 & -\frac{ r^2}{2 }  \\
      \end{array}
    \right)\]and
\[
(I-f^{\sharp}J\bar{u})l_0^{-1}=\frac{1}{\varsigma}\left(
                                                       \begin{array}{cc}
                                                         1+\frac{r^4}{4} & -r^2 \\
                                                         -r^2 & 1+\frac{r^4}{4} \\
                                                       \end{array}
                                                     \right).
\]
By \eqref{eq-Iw},  we have
\[\left(
    \begin{array}{cc}
      \widetilde{\mathbf{c}}_{13} & \widetilde{\mathbf{c}}_{14} \\
      \widetilde{\mathbf{c}}_{23} & \widetilde{\mathbf{c}}_{24} \\
      \widetilde{\mathbf{c}}_{33} & \widetilde{\mathbf{c}}_{34} \\
      \widetilde{\mathbf{c}}_{43} & \widetilde{\mathbf{c}}_{44} \\
      \widetilde{\mathbf{c}}_{53} & \widetilde{\mathbf{c}}_{54} \\
      \widetilde{\mathbf{c}}_{63} & \widetilde{\mathbf{c}}_{64} \\
    \end{array}
  \right)=\frac{1}{\varsigma}\left(
    \begin{array}{cc}
        \frac{-\lambda^{-1}zr^2}{2} & \lambda^{-1}z\\
        \lambda^{-1}z & -\frac{ \lambda^{-1}zr^2}{2 }  \\
   1+\frac{r^4}{4} & -r^2 \\
  -r^2 & 1+\frac{r^4}{4} \\
      \lambda\bar{z}&  -\frac{\lambda\bar{z}r^2}{2}  \\
  -\frac{\lambda\bar{z}r^2}{2} &\lambda\bar{z}\\
    \end{array}
  \right).
\]
Substituting these data into \eqref{eq-YandY}, one derives \eqref{eq-Y-ex-2}.
The rest are straightforward computations, which we will leave to interested readers.\\

{\small{\bf Acknowledgements}\ \ The author was supported by the NSFC Project No. 11571255 and the Fundamental Research Funds for the Central Universities.}

{\small
\def\refname{Reference}

}
{\small\ \

Peng Wang

Department of Mathematics, Tongji University,

Siping Road 1239, Shanghai, 200092,  P. R. China.

{\em E-mail address}: {netwangpeng@tongji.edu.cn}
 }

\end{document}